\documentclass[11pt]{article}

\usepackage[a4paper,margin=1in]{geometry}
\usepackage[utf8]{inputenc}
\usepackage[T1]{fontenc}
\usepackage{lmodern}
\usepackage{amsmath,amssymb,amsthm,mathtools}
\usepackage{microtype}
\usepackage{enumitem}
\usepackage{graphicx}
\usepackage{booktabs,longtable,tabularx,array}
\usepackage{xcolor}
\usepackage{caption}
\usepackage{tikz}
\usetikzlibrary{arrows.meta,positioning,shapes.geometric,fit,calc,decorations.pathreplacing}
\usepackage{url}
\usepackage{hyperref}
\hypersetup{
  colorlinks=true,
  linkcolor=blue!50!black,
  citecolor=green!45!black,
  urlcolor=blue!60!black,
  pdftitle={Certified Return-Word Induction for a Perturbed Hofstadter Recursion},
  pdfauthor={Marco Mantovanelli}
}

\newcolumntype{Y}{>{\raggedright\arraybackslash}X}
\newcolumntype{C}[1]{>{\centering\arraybackslash}p{#1}}
\setlist{nosep}
\title{\textbf{Certified Return-Word Induction for a Perturbed Hofstadter Recursion}\\
\large A finite-kernel proof of global well-definedness}

\author{
Marco Mantovanelli\\
Independent Researcher\\[0.4em]
\texttt{marco@mantovanelli.de}
}

\date{}

\newtheorem{theorem}{Theorem}[section]
\newtheorem{lemma}[theorem]{Lemma}
\newtheorem{proposition}[theorem]{Proposition}
\newtheorem{corollary}[theorem]{Corollary}
\newtheorem{definition}[theorem]{Definition}
\newtheorem{remark}[theorem]{Remark}

\newcommand{\eps}{\varepsilon}
\newcommand{\EA}{E_{A,2}}
\newcommand{\DA}{D_{A,3}}
\newcommand{\EB}{E_{B,2}}
\newcommand{\DB}{D_{B,3}}
\newcommand{\Abridge}{\mathcal A}
\newcommand{\Bbridge}{\mathcal B}
\newcommand{\AEpoch}{\mathcal E_A}
\newcommand{\BEpoch}{\mathcal E_B}
\newcommand{\Cseven}{C_7}
\newcommand{\rev}{\operatorname{rev}}
\newcommand{\pref}{\operatorname{pref}}
\newcommand{\suf}{\operatorname{suf}}
\newcommand{\code}[1]{\texttt{#1}}

\begin{document}

\maketitle

\begin{abstract}
We prove that the parity-perturbed Hofstadter recursion
\[
Q(1)=Q(2)=1,\qquad
Q(n)=Q(n-Q(n-1))+Q(n-Q(n-2))+(-1)^n
\]
is well-defined for every \(n\ge1\).
The proof is computer-assisted but not based on extrapolation from a long
finite trace.  It has three layers.

First, the recurrence is reduced to a binary sequence \(s_n\) and two
backward cursor heads.  A finite local certificate consists of 13
return-word types, 92 synchronized cursor states, and 122 exact
two-source transition rules.  The finite semantic checker verifies the
local recurrence identity and excludes the unique configuration that
could create the value \(2\).

Second, all unbounded word families are proved by explicit induction.
Four stationary source-word chunks and four linear bridge tails reduce to
ten parameterized zero-loop schemas.  Each induction step inserts one
copy of one of two state- and offset-preserving local rules.  Ordered
rank-word factorizations then assemble complete epochs and the two bridge
families at every level.

Third, a direct finite alignment identifies the recurrence prefix with
the initial return words, and a marked factor induction carries the
absolute source-block addresses through every bridge and epoch.  This
keeps the infinite construction inside the finite local kernel.  A finite
potential certificate gives a uniform block lag of at least \(38\), so
every source read is strictly in the past.  The reconstructed sequence
\[
Q(n)=n+1-T_{n+1}
\]
has a two-periodic recurrence residual, which vanishes in the two base
cases.  Hence the original recursion holds globally and all recursive
arguments are positive and smaller than the current index.
\end{abstract}

\medskip
\noindent\textbf{Keywords:}
Hofstadter recursion; meta-Fibonacci sequence; well-definedness;
return words; computer-assisted proof; finite kernel; symbolic dynamics;
nested recurrence.

\medskip
\noindent\textbf{MSC2020:}
Primary 11B37, 68Q45; Secondary 37B10, 03D25, 68Q60.

\medskip
\noindent\textbf{Version note.}
Version 3 replaces the symbolic \(S\!\to T\!\to U\!\to V\) closure
argument of Version 2.  That earlier finite-state abstraction did not by
itself establish semantic faithfulness to the recurrence at every future
position.  The present proof instead uses exact two-source return-word
transductions, all-level chunk and tail inductions, a global
free-monoid gluing theorem with absolute occurrence addresses, and an
explicit causality potential.
No conclusion in this version is inferred from the length of a finite
trace.  The title has accordingly been changed from ``finite-state'' to
``return-word'' induction: the control kernel is finite, while the
all-level word families are handled by explicit parameter induction.
\medskip

\tableofcontents

\section{Introduction}\label{sec:intro}

Hofstadter's \(Q\)-recursion
\[
Q(n)=Q(n-Q(n-1))+Q(n-Q(n-2))
\]
is a canonical example of a nested or meta-Fibonacci recurrence
\cite{Hofstadter1979}.  Its apparently simple form hides highly nonlocal
dependencies and irregular behavior; see, among others,
\cite{Pinn1999,Tanny1992,Alkan2017,Alkan2018,Fox2016LinearRecurrent,
Fox2018NewApproachQ}.  Nested recurrences can also encode severe
computational complexity and even undecidability phenomena
\cite{CelayaRuskey2012Undecidable}.

This paper studies the parity perturbation
\begin{equation}\label{eq:Qrec}
Q(1)=Q(2)=1,\qquad
Q(n)=Q(n-Q(n-1))+Q(n-Q(n-2))+(-1)^n.
\end{equation}
The sequence is listed as OEIS A394051 \cite{OEISA394051}.  In a
separate paper, the author proved an exact dyadic multiset law for its
value frequencies \cite{Mantovanelli2026}.  

Cloitre developed a substantially broader combinatorial and
analytic theory based on an odd--even split, exact interleavings of binary
words, Dyck paths, and plane forests \cite{Cloitre2026PerturbedQ}.  In
particular, he proved global well-definedness, the limit
\(Q(n)/n\to\tfrac12\), and the estimate
\[
 Q(n)=\frac n2+O\!\left(\frac{n}{\sqrt{\log n}}\right).
\]
The purpose of the present paper is to
give a complementary computer-assisted proof architecture in which the
finite work is an explicit, reproducible local certificate, while every
unbounded rank, chunk, bridge, and epoch family is handled by a stated
all-level induction.  This separation is designed to make clear exactly
which assertions are exhaustively checked and which are proved for all
levels.

\begin{theorem}[Main theorem]\label{thm:main}
The recursion \eqref{eq:Qrec} is well-defined for every \(n\ge1\).
Equivalently, every recursive argument is a positive integer strictly
smaller than the current index, and the recursion determines a unique
infinite sequence.
\end{theorem}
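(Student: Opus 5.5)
The plan follows the three-layer architecture announced in the abstract.

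\emph{Reduction.} I would encode the sequence through a counting function. Set \(T_{m}=m-1-Q(m-1)\) and let \(s_n\in\{0,1\}\) be its increments, so that \(Q(n)=n+1-T_{n+1}\). Since \(Q\) is expected to grow by steps in \(\{0,1,2\}\) in a parity-dependent way, \(T\) is a nondecreasing counting function. The two recursive arguments \(n-Q(n-1)\) and \(n-Q(n-2)\) then become two backward cursors, each indexing a position in the prefix of \(s\). Under this encoding, the recursion at index \(n\) becomes a local identity relating the current symbol of \(s\) to the symbols read at the two cursor heads. The goal is therefore to produce an infinite binary word \(s\), together with cursor trajectories, such that the following hold at every \(n\):
\begin{enumerate}
\item the local identity is satisfied;
\item both cursors point strictly into the past and to positive positions;
\item the forbidden configuration, which would force an increment producing the value \(2\) illegitimately, never occurs.
\end{enumerate}
By induction on \(n\), well-definedness and uniqueness then follow, since each step only reads already-defined values.

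\emph{Finite kernel plus all-level induction.} I would construct \(s\) as a concatenation of return words of 13 types and track the synchronized pair (output block, two source blocks) as a finite cursor state among 92. Each of the 122 transition rules is checked exhaustively by machine. Each check confirms three things: the local identity holds letter by letter inside the block, the cursors advance consistently, and the forbidden pattern is absent. The unbounded part is handled by inducting on level parameters, not by unrolling. The stationary chunks and linear bridge tails are parameterized words such as \(u v^k w\). I would show that passing from \(k\) to \(k+1\) inserts exactly one copy of a rule that preserves both state and offset, i.e.\ a zero-loop in the transition graph. Hence validity at \(k\) implies validity at \(k+1\). Ordered factorizations of rank words then assemble whole epochs and bridges at every level from these schemas, again by induction on the level.

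\emph{Gluing, causality, conclusion.} Next I would verify directly that the actual recurrence prefix coincides with the initial return words, which fixes the base case. Then, by a marked factor induction, I would carry absolute source addresses through the bridges and epochs, so that the sources read by the construction are the genuine earlier positions of the same infinite word and not merely abstract words of the right type. For causality, I would define a potential, namely the block lag between the output position and each source position. Using a finite certificate, I would check that every transition rule preserves the bound that this lag is at least \(38\), so every read is strictly earlier. Finally, the reconstructed \(Q(n)=n+1-T_{n+1}\) satisfies the recursion up to a residual. The local identities show this residual depends only on \(n \bmod 2\), so it vanishes identically once it vanishes at two base indices of opposite parity.

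I expect the main obstacle to be the gluing/faithfulness step. A finite-state abstraction can easily certify that some consistent word exists without certifying that the cursors really read the word itself at the correct addresses, which was the gap in the earlier version. The first thing I would try is the marked-factor induction: each block carries its absolute source address as a marking, and I would prove that the rank-word factorizations transport these markings exactly under substitution.
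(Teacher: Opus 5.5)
Your overall architecture matches the paper's, but two steps fail as you have written them. The first is the reduction. With $T_m=m-1-Q(m-1)$ you get $Q(n)=n-T_{n+1}$, not $n+1-T_{n+1}$. More seriously, $s_n$ cannot be the increments of $T$. Since $Q(1)=Q(2)=1$ and each $Q(n)$ is a sum of two earlier values plus $\pm1$, every $Q(n)$ is odd. Hence $Q(m)-Q(m-1)$ is even and $T_{m+1}-T_m=1-(Q(m)-Q(m-1))$ is odd. Concretely, your $T$ runs $0,1,2,1,2,3,2$ for $m=2,\dots,8$: it is not monotone, and its increments are not in $\{0,1\}$. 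Differencing the recursion at consecutive indices also leaves the term $2(-1)^n$ and compares $Q$ at head positions an odd distance apart, so no binary local identity results.

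The paper instead uses $T_n=n-Q(n-1)$ and the two-step quantity $s_n=\tfrac12\bigl(Q(n+1)-Q(n-1)\bigr)$. Then only the two parity subsequences of $T$ are monotone, via the clock $T_{n+2}=T_n+2(1-s_n)$. Subtracting the recursion at $n$ from that at $n+2$ cancels the parity term and gives $s_{n+1}=(1-s_n)s_{T_n+1}+(1-s_{n-1})s_{T_{n-1}+2}$. Its only bad input is $(s_{n-1},s_n,s_{T_n+1},s_{T_{n-1}+2})=(0,0,1,1)$. Your own final step, a residual with $R_{n+2}=R_n$, only works with this two-step differencing. Also, once $s$ is binary, $1\le T_n\le n-1$ makes positivity and backwardness of the recursive arguments automatic rather than a separate certificate item.

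The second failing step is causality. The block lag is not a quantity that each rule preserves. Along one rule, which produces one target block, $j-p$ changes by $1$ minus the number of source blocks that head crosses. Inclusive bridges span up to three A-words and four B-words, so some rules lower the lag, and a per-rule check of ``lag $\ge38$'' would fail. The paper certifies a separate nonnegative potential $\Phi$ on the reachable role nodes (synchronized state plus physical role). It satisfies $\Phi(v)\le\Phi(u)+w$ on all $236$ reachable edges, with no reachable negative cycle. The invariant carried along is state-dependent: the slack, lag minus $38$, is at least $\Phi$ of the current node. This is seeded at target block $74$ by the actual lags $38,41$ (source blocks $36,33$ from the finite alignment) and the values $\Phi=0,3$. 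Nonnegativity of $\Phi$ then yields lag $\ge38$ everywhere; the potential is not the lag itself.
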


The proof architecture is shown in Figure~\ref{fig:architecture}.

\begin{figure}[htbp]
\centering
\begin{tikzpicture}[
  node distance=7mm and 7mm,
  box/.style={draw,rounded corners,minimum height=12mm,text width=29mm,
              align=center,fill=blue!5},
  arr/.style={-{Latex[length=2mm]},thick}
]
\node[box] (r) {recursion\\\(Q\)};
\node[box,right=of r] (red) {binary reduction\\\(T_n,s_n\)};
\node[box,right=of red] (loc) {finite local kernel\\13 words, 92 states, 122 rules};
\node[box,below=of loc] (ind) {all-level induction\\chunks and tails};
\node[box,left=of ind] (glob) {global factor\\gluing};
\node[box,left=of glob] (caus) {causality and\\\(C_7\) semantics};
\node[box,below=of red] (back) {reconstruction\\\(Q(n)=n+1-T_{n+1}\)};
\draw[arr] (r)--(red);
\draw[arr] (red)--(loc);
\draw[arr] (loc)--(ind);
\draw[arr] (ind)--(glob);
\draw[arr] (glob)--(caus);
\draw[arr] (caus)--(back);
\draw[arr] (back)--(r);
\end{tikzpicture}
\caption{Logical architecture of the proof.  The finite local kernel is
not asserted to generate all future states by itself; global membership
is supplied by explicit all-level word induction and factor gluing.}
\label{fig:architecture}
\end{figure}

The proof has five substantive ingredients.

\begin{enumerate}
\item A reduction of \eqref{eq:Qrec} to a binary sequence \(s_n\) and
two backward cursor heads.
\item A finite local semantic kernel, checked exhaustively on 13
return-word types and 92 synchronized states.
\item Explicit rank words and epoch words that describe the global order
of the local factors.
\item Parameterized all-level induction for four stationary chunk families
and four linear bridge-tail families.
\item A global factor induction plus a finite potential certificate that
keeps all reads strictly in the past.
\end{enumerate}

The finite computations in the archived certificate serve two distinct purposes.
The first is exhaustive finite verification: all 122 local rules, offsets,
source bridges, and semantic bit identities are checked.  The second is
diagnostic expansion: many finite levels are generated as a regression
test.  Only the first purpose enters the proof.  The all-level claims rest
on the explicit loop inductions stated below, not on the diagnostic
cutoffs.

\section{Reduction to a binary cursor system}\label{sec:reduction}

For \(n\ge2\), define
\begin{equation}\label{eq:Tdef}
T_n=n-Q(n-1),
\end{equation}
and, whenever the displayed values exist,
\begin{equation}\label{eq:sdef}
s_n=\frac{Q(n+1)-Q(n-1)}2.
\end{equation}

\begin{lemma}[Two-step clock equation]\label{lem:clock}
For every \(n\ge2\) for which the displayed values have been constructed,
\begin{equation}\label{eq:A}
T_{n+2}=T_n+2(1-s_n).
\end{equation}
\end{lemma}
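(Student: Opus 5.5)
This is a direct algebraic identity: substitute the definitions \eqref{eq:Tdef} and \eqref{eq:sdef} and simplify. The plan is to write both sides in terms of the values of \(Q\), without using the recursion \eqref{eq:Qrec}.

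First, by \eqref{eq:Tdef} with index \(n+2\),
\[
T_{n+2}=n+2-Q(n+1).
\]
This requires \(Q(n+1)\) to have been constructed, which is the hypothesis ``whenever the displayed values exist''.

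Second, \eqref{eq:sdef} gives
\[
Q(n+1)=Q(n-1)+2s_n .
\]
Substituting this into the first formula yields
\[
T_{n+2}=n+2-Q(n-1)-2s_n .
\]

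Third, \eqref{eq:Tdef} with index \(n\) (valid since \(n\ge2\)) gives \(n-Q(n-1)=T_n\). Hence
\[
T_{n+2}=T_n+2-2s_n=T_n+2(1-s_n),
\]
which is \eqref{eq:A}.

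There is no real obstacle. The only point to state explicitly is the domain of validity. The identity needs \(n\ge2\), so that \(T_n\) is defined, and it needs \(Q(n-1)\) and \(Q(n+1)\) to exist, so that \(s_n\) and \(T_{n+2}\) are defined. The lemma does not claim that \(s_n\in\{0,1\}\) or even that \(s_n\) is an integer; those facts come later from the binary reduction and the certificate. The present identity is purely definitional and holds for whatever real value \(s_n\) takes.
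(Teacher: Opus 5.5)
Your proof is correct and is essentially the paper's own argument. The paper writes \(T_{n+2}-T_n=2-\bigl(Q(n+1)-Q(n-1)\bigr)=2(1-s_n)\) directly from \eqref{eq:Tdef} and \eqref{eq:sdef}, which is the same substitution you carry out in three steps.
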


\begin{proof}
By \eqref{eq:Tdef},
\[
T_{n+2}-T_n
=2-\bigl(Q(n+1)-Q(n-1)\bigr)
=2(1-s_n).
\]
\end{proof}

Define the two cursor heads
\begin{equation}\label{eq:heads}
h_n=T_n+1,\qquad k_n=T_{n-1}+2.
\end{equation}

\begin{lemma}[Binary local recurrence]\label{lem:binarylocal}
For \(n\ge3\), assume \(s_j\in\{0,1\}\) for all indices used below.  Then
\begin{equation}\label{eq:B}
s_{n+1}
=(1-s_n)s_{T_n+1}
+(1-s_{n-1})s_{T_{n-1}+2}.
\end{equation}
Equivalently,
\[
s_{n+1}=(1-s_n)s_{h_n}+(1-s_{n-1})s_{k_n}.
\]
\end{lemma}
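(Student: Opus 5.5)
The plan is a direct computation from the definitions: express \(s_{n+1}\) through the recursion \eqref{eq:Qrec} at the two indices \(n+2\) and \(n\). The parity terms cancel, and each of the two remaining differences collapses to a single \(s\)-value by Lemma~\ref{lem:clock}. The binary hypothesis \(s_j\in\{0,1\}\) is used only in a two-case split.

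\textbf{Step 1: write the arguments in terms of \(T\).} By \eqref{eq:sdef}, \(s_{n+1}=\tfrac12\bigl(Q(n+2)-Q(n)\bigr)\). I apply \eqref{eq:Qrec} at \(n+2\) and at \(n\); both carry the same sign \((-1)^n\), so it cancels in the difference. Next I rewrite the four recursive arguments using \eqref{eq:Tdef}:
\[
n+2-Q(n+1)=T_{n+2},\qquad n-Q(n-1)=T_n,
\]
\[
n+2-Q(n)=T_{n+1}+1,\qquad n-Q(n-2)=T_{n-1}+1.
\]
The second line uses \(T_{n+1}=n+1-Q(n)\) and \(T_{n-1}=n-1-Q(n-2)\). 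The latter needs \(n-1\ge2\), which is exactly why the hypothesis is \(n\ge3\). This gives
\[
Q(n+2)-Q(n)=\bigl[Q(T_{n+2})-Q(T_n)\bigr]+\bigl[Q(T_{n+1}+1)-Q(T_{n-1}+1)\bigr].
\]

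\textbf{Step 2: evaluate each bracket by cases.} Lemma~\ref{lem:clock} gives \(T_{n+2}=T_n+2(1-s_n)\).
If \(s_n=1\), the first bracket is \(0\).
If \(s_n=0\), it equals \(Q(T_n+2)-Q(T_n)=2s_{T_n+1}\) by \eqref{eq:sdef} at index \(T_n+1\).
In both cases the first bracket equals \(2(1-s_n)s_{T_n+1}\).

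Similarly, Lemma~\ref{lem:clock} at index \(n-1\) gives \(T_{n+1}=T_{n-1}+2(1-s_{n-1})\).
If \(s_{n-1}=1\), the second bracket is \(0\).
If \(s_{n-1}=0\), it equals \(Q(T_{n-1}+3)-Q(T_{n-1}+1)=2s_{T_{n-1}+2}\).
Dividing the sum of the two brackets by \(2\) yields \eqref{eq:B}. The restatement with \(h_n,k_n\) is then immediate from \eqref{eq:heads}.

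\textbf{Main obstacle.} The algebra is routine; the only delicate point is bookkeeping of which values must already exist. The values \(s_{T_n+1}\) and \(s_{T_{n-1}+2}\) require \(Q\) at \(T_n,T_n+2,T_{n-1}+1,T_{n-1}+3\), and those indices must be admissible. I would handle this exactly as the statement does: it is conditional on the displayed values having been constructed and on the used \(s_j\) being binary. The case split should be phrased to invoke \eqref{eq:sdef} only in the cases where the corresponding head is actually read, that is, when \(s_n=0\) or \(s_{n-1}=0\).
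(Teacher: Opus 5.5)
Your proposal is correct and is essentially the paper's proof: subtract the recursion at \(n\) from the one at \(n+2\), cancel the parity terms, rewrite the four arguments as \(T_{n+2},T_n,T_{n+1}+1,T_{n-1}+1\), and use Lemma~\ref{lem:clock} with \(s_n,s_{n-1}\in\{0,1\}\) to collapse each bracket to \(2(1-s_n)s_{T_n+1}\) and \(2(1-s_{n-1})s_{T_{n-1}+2}\). Your explicit two-case split, and your remark that \(T_{n-1}\) requires \(n\ge3\), simply spell out what the paper compresses into ``Since \(s_n\) is binary'' and ``Similarly.''
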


\begin{proof}
Subtract the recurrence at \(n\) from the recurrence at \(n+2\).  The
parity terms cancel and
\[
2s_{n+1}
=Q(T_{n+2})-Q(T_n)
 +Q(T_{n+1}+1)-Q(T_{n-1}+1).
\]
By Lemma~\ref{lem:clock},
\(T_{n+2}=T_n+2(1-s_n)\).  Since \(s_n\) is binary,
\[
Q(T_{n+2})-Q(T_n)=2(1-s_n)s_{T_n+1}.
\]
Similarly,
\[
Q(T_{n+1}+1)-Q(T_{n-1}+1)
=2(1-s_{n-1})s_{T_{n-1}+2}.
\]
Division by \(2\) gives \eqref{eq:B}.
\end{proof}

For \(n\ge3\), the head dynamics are
\begin{equation}\label{eq:headupdate}
h_{n+1}=k_n+1-2s_{n-1},\qquad
k_{n+1}=h_n+1.
\end{equation}

There is only one way in which the right-hand side of \eqref{eq:B} can
equal \(2\).

\begin{lemma}[Unique bad configuration]\label{lem:bad}
For binary inputs, \eqref{eq:B} produces \(s_{n+1}=2\) if and only if
\begin{equation}\label{eq:bad}
(s_{n-1},s_n,s_{h_n},s_{k_n})=(0,0,1,1).
\end{equation}
\end{lemma}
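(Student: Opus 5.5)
The plan is a direct case analysis of the right-hand side of \eqref{eq:B}, in the form $s_{n+1}=(1-s_n)s_{h_n}+(1-s_{n-1})s_{k_n}$ from Lemma~\ref{lem:binarylocal}. All four inputs $s_{n-1},s_n,s_{h_n},s_{k_n}$ are assumed to lie in $\{0,1\}$. So each of the two summands $(1-s_n)s_{h_n}$ and $(1-s_{n-1})s_{k_n}$ is a product of two bits, and therefore itself lies in $\{0,1\}$. Hence the sum lies in $\{0,1,2\}$, and it equals $2$ exactly when both summands equal $1$.

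Next I characterize when each summand is $1$. A product of two bits equals $1$ iff both factors equal $1$. Thus $(1-s_n)s_{h_n}=1$ iff $s_n=0$ and $s_{h_n}=1$. Likewise $(1-s_{n-1})s_{k_n}=1$ iff $s_{n-1}=0$ and $s_{k_n}=1$. Combining the two conditions gives $s_{n+1}=2$ iff $(s_{n-1},s_n,s_{h_n},s_{k_n})=(0,0,1,1)$, which is \eqref{eq:bad}.

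For the reverse direction, substituting $(0,0,1,1)$ into \eqref{eq:B} gives $1\cdot1+1\cdot1=2$. This is immediate. I should also note the edge case $h_n=k_n$ (or coincidence of a head index with $n-1$ or $n$). This does not affect the argument: the statement is about the tuple of values, and the computation above uses only the values, not the distinctness of the indices.

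There is no real obstacle here. The only point to state explicitly is that the binary hypothesis makes each summand a bit, so that ``sum $=2$'' forces ``both summands $=1$''. If one preferred, the claim could also be verified by exhausting all $16$ binary input tuples. I would mention this as a trivially finite check consistent with the paper's certificate style, but the two-line algebraic argument suffices.
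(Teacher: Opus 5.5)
Your proposal is correct and follows the paper's proof exactly: each summand of \eqref{eq:B} is a product of bits and so lies in $\{0,1\}$; hence the sum equals $2$ precisely when both summands equal $1$, which is equivalent to \eqref{eq:bad}. Your remarks on possibly coinciding head indices and on the optional $16$-case exhaustive check are harmless additions.
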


\begin{proof}
Both summands in \eqref{eq:B} belong to \(\{0,1\}\).  Their sum is \(2\)
exactly when both are \(1\), which is equivalent to \eqref{eq:bad}.
\end{proof}

The converse direction is the bridge back to the original recursion.

\begin{theorem}[Reconstruction principle]\label{thm:reconstruction}
Suppose integer sequences \(T_n\) and binary values \(s_n\) satisfy
\[
T_2=1,\qquad T_3=2,\qquad s_2=0,\qquad s_3=1,
\]
together with \eqref{eq:A} for every \(n\ge2\) and \eqref{eq:B} for every
\(n\ge3\).
Define
\begin{equation}\label{eq:Qreconstruct}
Q(n)=n+1-T_{n+1}.
\end{equation}
Then \(Q(1)=Q(2)=1\), all recursive arguments in \eqref{eq:Qrec} are
positive and smaller than \(n\), and \(Q\) satisfies \eqref{eq:Qrec}.
\end{theorem}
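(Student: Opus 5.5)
The plan is to first check that \eqref{eq:Qreconstruct} is consistent with the definitions \eqref{eq:Tdef} and \eqref{eq:sdef}, and that the resulting recursive arguments lie in the range \([1,n-1]\). Then I will show that the recurrence residual is two-periodic in \(n\), so that it vanishes once it vanishes in the base cases. This runs Lemmas~\ref{lem:clock} and~\ref{lem:binarylocal} backwards.

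\textbf{Step 1 (consistency and initial values).} From \eqref{eq:Qreconstruct} we get \(n-Q(n-1)=n-(n-T_n)=T_n\), so \eqref{eq:Tdef} holds. Next, \(Q(n+1)-Q(n-1)=2-(T_{n+2}-T_n)\). By \eqref{eq:A} this equals \(2s_n\) for every \(n\ge2\), so \eqref{eq:sdef} holds as well. The initial values are \(Q(1)=2-T_2=1\) and \(Q(2)=3-T_3=1\).

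\textbf{Step 2 (range of the arguments).} For \(n\ge3\) the two recursive arguments in \eqref{eq:Qrec} are \(n-Q(n-1)=T_n\) and \(n-Q(n-2)=T_{n-1}+1\).

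Since \(s_n\in\{0,1\}\), \eqref{eq:A} gives \(T_{n+2}\in\{T_n,T_n+2\}\). Hence \(T\) is nondecreasing along each parity class, and \(T_n\ge\min(T_2,T_3)=1\).

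The same relation gives \(T_{n+2}-(n+2)\le T_n-n\). Therefore \(T_n-n\le\max(T_2-2,T_3-3)=-1\), that is, \(T_n\le n-1\). Consequently \(1\le T_n\le n-1\) and \(2\le T_{n-1}+1\le n-1\). So both arguments are positive and smaller than \(n\), and the recursion determines \(Q\) uniquely by strong induction.

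\textbf{Step 3 (two-periodic residual).} Set
\[
R(n)=Q(n)-Q(T_n)-Q(T_{n-1}+1)-(-1)^n \qquad (n\ge3).
\]
Subtracting gives
\[
R(n+2)-R(n)=\bigl(Q(n+2)-Q(n)\bigr)-\bigl(Q(T_{n+2})-Q(T_n)\bigr)-\bigl(Q(T_{n+1}+1)-Q(T_{n-1}+1)\bigr).
\]
I evaluate the three differences separately.

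The first is \(2s_{n+1}\) by Step 1.

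For the second, \eqref{eq:A} gives \(T_{n+2}=T_n+2(1-s_n)\). If \(s_n=1\) the difference is \(0\). Otherwise it equals \(Q(T_n+2)-Q(T_n)=2s_{T_n+1}\), which is legitimate because \(T_n+1\ge2\). In both cases it equals \(2(1-s_n)s_{T_n+1}\).

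The third equals \(2(1-s_{n-1})s_{T_{n-1}+2}\) by the same argument applied at index \(n-1\).

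By \eqref{eq:B} these combine to give \(R(n+2)=R(n)\) for every \(n\ge3\).

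\textbf{Step 4 (base cases).} It remains to check \(R(3)=R(4)=0\).

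We have \(T_4=T_2+2(1-s_2)=3\) and \(T_5=T_3+2(1-s_3)=2\). Hence \(Q(3)=1\) and \(Q(4)=3\).

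For \(n=3\): \(Q(T_3)+Q(T_2+1)-1=Q(2)+Q(2)-1=1=Q(3)\), so \(R(3)=0\).

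For \(n=4\): \(Q(T_4)+Q(T_3+1)+1=Q(3)+Q(3)+1=3=Q(4)\), so \(R(4)=0\).

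Thus \(R\equiv0\) and \eqref{eq:Qrec} holds for all \(n\ge3\).

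The only delicate point is Step 3, where one must confirm that every index used in \eqref{eq:B} and \eqref{eq:A} is within the stated ranges. This includes checking that \(T_n+1\ge2\), so that \(s_{T_n+1}\) is defined. The lower bound \(T_n\ge1\) from Step 2 handles this.
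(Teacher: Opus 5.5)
Your proof is correct and follows the paper's argument essentially step for step. You derive $Q(n+1)-Q(n-1)=2s_n$ and the initial values, bound $1\le T_n\le n-1$ through the two parity subsequences, show that the residual $R_n$ is two-periodic using \eqref{eq:A} and \eqref{eq:B}, and check $R_3=R_4=0$ from $Q(3)=1$ and $Q(4)=3$. Your version is only more explicit about index ranges, for example that $T_n+1\ge2$ so $s_{T_n+1}$ is defined; the paper leaves this implicit.
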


\begin{proof}
Equation \eqref{eq:A} gives
\begin{equation}\label{eq:Qdiff}
Q(n+1)-Q(n-1)=2s_n.
\end{equation}
The initial values imply \(Q(1)=1\) and \(Q(2)=1\).  Since each parity
subsequence of \(T_n\) starts at \(1\) or \(2\) and grows by either \(0\)
or \(2\), one has
\[
1\le T_n\le n-1.
\]
Hence \(Q(n-1)=n-T_n\ge1\), and
\[
1\le T_n<n,\qquad 1\le T_{n-1}+1<n
\]
for every recursive step.

Define the residual
\[
R_n=
Q(n)-Q(T_n)-Q(T_{n-1}+1)-(-1)^n.
\]
Using \eqref{eq:A}, \eqref{eq:Qdiff}, and binary \(s\), one obtains
\[
Q(T_{n+2})-Q(T_n)=2(1-s_n)s_{T_n+1},
\]
and
\[
Q(T_{n+1}+1)-Q(T_{n-1}+1)
=2(1-s_{n-1})s_{T_{n-1}+2}.
\]
Equation \eqref{eq:B} therefore yields
\[
R_{n+2}-R_n=0.
\]
The initial data give \(Q(3)=1\), \(Q(4)=3\), and direct substitution
gives \(R_3=R_4=0\).  Thus \(R_n=0\) for all \(n\ge3\), which is exactly
\eqref{eq:Qrec}.
\end{proof}

Consequently, the rest of the paper constructs a total binary solution of
\eqref{eq:A}--\eqref{eq:B} and proves that every head read is causal.

\section{\texorpdfstring{The local \(C_7\) semantics}{The local C7 semantics}}\label{sec:c7}

\begin{definition}[\(C_7\)-code]\label{def:c7}
For a local configuration at time \(n\ge4\), define
\begin{align}
\Cseven(n)={}&64s_{n-2}+32s_{n-1}+16s_n+8s_{n+1}+4s_{n+2}\nonumber\\
&+2s_{T_n+1}+s_{T_{n-1}+2}.
\label{eq:C7}
\end{align}
\end{definition}

Thus \(\Cseven(n)\in\{0,\ldots,127\}\) is the seven-bit word
\[
(s_{n-2},s_{n-1},s_n,s_{n+1},s_{n+2},
  s_{T_n+1},s_{T_{n-1}+2}).
\]
The finite certificate uses 21 such codes:
\[
20,22,25,27,36,37,43,46,47,49,52,73,77,80,84,89,91,100,101,106,107.
\]

The local semantic condition is simply
\begin{equation}\label{eq:semanticbit}
s_{n+1}=(1-s_n)s_{T_n+1}
       +(1-s_{n-1})s_{T_{n-1}+2}\in\{0,1\}.
\end{equation}
In particular, the code set excludes the bit pattern
\eqref{eq:bad}.

The \(\Cseven\)-sequence is parsed into 13 return-word types.  Their
lengths are
\[
2,31,8,10,39,20,30,40,20,30,31,40,39.
\]
The full words are listed in Appendix~\ref{app:returnwords} and stored in
\path{kernel_data/return_words.json}.

\subsection{Synchronized cursor states}

A synchronized state stores
\[
(\tau_{-1},r_A,\delta_A,r_B,\delta_B,\pi),
\]
where \(\tau_{-1}\) is the previous target-word type,
\(r_A,r_B\in\{0,\ldots,12\}\) are the source-word types,
\(\delta_A,\delta_B\) are source offsets, and
\(\pi\in\{0,1\}\) is the marker parity.  The machine-readable state code is
\begin{equation}\label{eq:statecode}
\tau_{-1}+16r_A+2^8\delta_A+2^{14}r_B+2^{18}\delta_B+2^{24}\pi.
\end{equation}

The finite local kernel contains 92 synchronized states and 122 transition
rules.  Each rule records

\begin{itemize}
\item the current and next target-word types;
\item the current and next synchronized states;
\item the inclusive A- and B-source-word bridges;
\item the initial and terminal source offsets;
\item the number of crossed source blocks;
\item the physical role swap, if the target type is odd.
\end{itemize}

The odd swap types are
\[
1,\ 4,\ 10,\ 12.
\]
At such a block, the logical A- and B-heads exchange their physical source
streams.  This distinction is essential in the global gluing argument.

\begin{figure}[htbp]
\centering
\begin{tikzpicture}[
  scale=0.95,
  every node/.style={font=\small},
  head/.style={draw,circle,fill=blue!8,minimum size=7mm},
  arr/.style={-{Latex[length=2mm]},thick}
]
\draw[thick] (0,0)--(11,0);
\foreach \x in {0,...,11} \draw (\x,0.08)--(\x,-0.08);
\node[below] at (2,0) {\(p_A\)};
\node[below] at (5,0) {\(p_B\)};
\node[below] at (10,0) {\(j\)};
\node[head] (a) at (2,0.65) {A};
\node[head] (b) at (5,0.65) {B};
\node[head,fill=orange!12] (t) at (10,0.65) {T};
\draw[arr] (a)--(2,0.12);
\draw[arr] (b)--(5,0.12);
\draw[arr] (t)--(10,0.12);
\draw[decorate,decoration={brace,amplitude=4pt,mirror}]
  (2,-0.45)--(10,-0.45) node[midway,below=5pt] {\(j-p_A\)};
\draw[decorate,decoration={brace,amplitude=4pt,mirror}]
  (5,-1.05)--(10,-1.05) node[midway,below=5pt] {\(j-p_B\)};
\end{tikzpicture}
\caption{The target block \(j\) and the two physical source heads.
The global proof establishes \(j-p_A\ge38\) and \(j-p_B\ge38\) after the
finite seed.}
\label{fig:cursors}
\end{figure}

Write \(W_r\) for the \(C_7\)-word of return type \(r\), and number all
return-word occurrences from zero.  If a block begins at scalar marker
\(m\), its ordered scalar cursors are represented by
\begin{equation}\label{eq:markersync}
\bigl(k_{m+4},k_{m+3}\bigr).
\end{equation}
The block addresses \(p_A,p_B\) specify the occurrences containing these
two positions; the offsets in \eqref{eq:statecode} specify their positions
inside those occurrences.

\begin{lemma}[Finite alignment with the recurrence]\label{lem:alignment}
Direct evaluation of \eqref{eq:Qrec} through the finite initial interval
has positive, strictly backward recursive arguments and gives
\[
 \Cseven(37)\Cseven(38)=W_0,\qquad
 \Cseven(39)\cdots\Cseven(69)=W_1,\qquad
 \Cseven(70)\cdots\Cseven(77)=W_2.
\]
Thus the initial word boundaries are \(37,39,70,78\).  At marker \(78\),
the preceding type is \(2\), the marker parity is zero, and the two source
addresses are
\[
(p_A,\delta_A)=(1,1),\qquad(p_B,\delta_B)=(1,2).
\]
Their scalar positions are
\[
(k_{82},k_{81})=(40,41),
\]
and \eqref{eq:statecode} is \(540946\).

Starting there, the direct \(C_7\)-prefix has type word
\[
(0,0,3)C_{1,5}C_{5,2}C_{2,3}C_{3,8}\Abridge_1.
\]
It occupies target blocks \(3,\ldots,73\).  The next block has marker
\(793\), state \(17455430\), and absolute source blocks
\[
p_A=36,\qquad p_B=33.
\]
In particular, at target block \(74\) the two block lags are \(38\) and
\(41\).
\end{lemma}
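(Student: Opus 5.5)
This lemma is a finite statement about a fixed initial segment of the recurrence, so I would prove it by direct exact evaluation. No induction or appeal to the local kernel is needed. The plan is to compute $Q(1),\dots,Q(N)$ from \eqref{eq:Qrec} for an explicit bound $N$ large enough to cover every index the lemma mentions. The largest index touched is the end of target block $73$, which is at or just below marker $793$. The $C_7$ window also reaches $s_{n+2}$, so we need $N$ around $796$ to have $Q(n+1)$ available. At each step $n\le N$ I would record the two recursive arguments $n-Q(n-1)$ and $n-Q(n-2)$ and check that they lie in $\{1,\dots,n-1\}$. This gives the first assertion, positivity and strict backwardness on the initial interval.

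From these exact values I would form $T_n$ by \eqref{eq:Tdef} and $s_n$ by \eqref{eq:sdef}, and check that every $s_n$ is binary. Lemma~\ref{lem:clock} and Lemma~\ref{lem:binarylocal} then hold automatically on this range, since they are identities derived from the recurrence. Next I would evaluate $\Cseven(n)$ via \eqref{eq:C7} for $37\le n\le 792$ and compare three pieces against the stored words $W_0,W_1,W_2$ of Appendix~\ref{app:returnwords}:
\begin{itemize}
\item $\Cseven(37)\Cseven(38)$ against $W_0$, which has length $2$;
\item $\Cseven(39..69)$ against $W_1$, which has length $31$;
\item $\Cseven(70..77)$ against $W_2$, which has length $8$.
\end{itemize}
Matching confirms the boundaries $37,39,70,78$. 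At marker $78$ I would read the cursor pair $(k_{82},k_{81})=(T_{81}+2,T_{80}+2)$ from \eqref{eq:heads} and check that it equals $(40,41)$. Positions $40$ and $41$ lie in block $1$, which starts at $39$, at offsets $1$ and $2$. With $\tau_{-1}=2$, $\pi=0$, $r_A=r_B=1$ and \eqref{eq:statecode}, the state code is $2+16+256+16384+2\cdot2^{18}=540946$, which I would verify arithmetically.

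For the second half I would continue parsing the computed $\Cseven$-sequence from marker $78$. At each step I greedily match the next stored return word, which must be unambiguous by the lengths and contents listed. This yields the sequence of types for blocks $3,\dots,73$. I then compare it with the concatenation $(0,0,3)C_{1,5}C_{5,2}C_{2,3}C_{3,8}\Abridge_1$, expanded using the definitions of the chunk and bridge words given in the appendix/data. The block lengths should sum to $793-78=715$. At marker $793$ I would read $k_{797},k_{796}$ and locate the containing blocks by cumulative word lengths, expecting blocks $36$ and $33$. I would then recompute the state code from $\tau_{-1}$, the offsets, the source types and the parity, and check it equals $17455430$. The lags are then $74-36=38$ and $74-33=41$.

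The main obstacle is bookkeeping rather than mathematics. The difficult part is getting the conventions exactly right and confirming that the symbolic expansion of the chunk and bridge words $C_{i,j}$ and $\Abridge_1$ matches the stored data. The relevant conventions are:
\begin{itemize}
\item the zero-based block numbering;
\item the ordering $(k_{m+4},k_{m+3})$ versus the logical A/B roles, including possible swaps at odd types;
\item the definition of the marker parity $\pi$.
\end{itemize}
I would resolve these by fixing them from the marker-$78$ case, where everything is small and hand-checkable. I would then apply the same conventions mechanically at $793$ in the archived checker, so that the whole lemma reduces to a reproducible finite computation.
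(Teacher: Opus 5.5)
Your plan is the same direct finite computation as the paper's proof. The paper's checker evaluates \eqref{eq:Qrec} while checking each argument before it is read, derives $T$, $s$ and $C_7$, compares all $71$ blocks through $\Abridge_1$, and checks the two boundary states and addresses; it uses a direct limit of $1200$, comfortably past the handoff at marker $793$, where you propose roughly $796$.

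The one step to phrase more carefully is the parsing. Since $W_0=84\,43$ is a prefix of every other return word, ``greedy'' must mean cutting at the occurrences of $84\,43$ (each $W_r$ contains that factor only at its start). Alternatively, and more simply, compare the computed codes directly with the concatenation of the stated type word.
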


\begin{proof}
This is a finite exact-integer calculation, not a global extrapolation.
The checker \path{code/finite_prefix_alignment_checker.py}
first evaluates the original recurrence, checking each argument before it
is read.  It then derives \(T,s,C_7\), verifies \eqref{eq:A} and
\eqref{eq:B}, compares all \(71\) generated blocks through
\(\Abridge_1\), and checks the two stated boundary states and addresses.
The default direct limit is \(1200\), while the asserted handoff already
occurs at marker \(793\).
\end{proof}

\begin{lemma}[Cursor synchronization]\label{lem:cursorsync}
Suppose a return word \(W_r\) of length \(\ell\) starts at marker \(m\)
and its source addresses represent \eqref{eq:markersync}.  The cursor
transform stored for type \(r\) sends them to
\[
\bigl(k_{m+\ell+4},k_{m+\ell+3}\bigr).
\]
For even \(\ell\) this transform is
\((x,y)\mapsto(x+\Delta_A(r),y+\Delta_B(r))\); for odd \(\ell\) it is
\((x,y)\mapsto(y+\Delta_A(r),x+\Delta_B(r))\).
\end{lemma}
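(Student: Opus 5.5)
The plan is to iterate the head update \eqref{eq:headupdate} across the $\ell$ positions of the word and read off the net increments from the word itself. The key structural fact is that \eqref{eq:headupdate} swaps the two heads at each step:
\[
h_{n+1}=k_n+1-2s_{n-1},\qquad k_{n+1}=h_n+1 .
\]
Writing the pair as $(k_{n+1},k_n)$ and using $k_{n+1}=h_n+1$, I first rewrite the update as a one-step map on consecutive $k$'s:
\[
k_{n+2}=h_{n+1}+1=k_n+2-2s_{n-1}.
\]
This is just Lemma~\ref{lem:clock} shifted, since $k_n=T_{n-1}+2$. Hence the pair $(k_{n+4},k_{n+3})$ advances to $(k_{n+5},k_{n+4})$. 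Each step therefore exchanges the two coordinates and adds $2-2s_{n+1}$ to the coordinate that moves two steps forward.

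Next I iterate this from marker $m$ to marker $m+\ell$. After $\ell$ steps, the first coordinate $k_{m+\ell+4}$ descends from $k_{m+4}$ if $\ell$ is even, and from $k_{m+3}$ if $\ell$ is odd. The same holds with the roles reversed for $k_{m+\ell+3}$. The parity check is that $m+\ell+4\equiv m+4 \pmod 2$ exactly when $\ell$ is even. This produces the stated swap pattern: $(x,y)\mapsto(x+\Delta_A,y+\Delta_B)$ for even $\ell$, and $(x,y)\mapsto(y+\Delta_A,x+\Delta_B)$ for odd $\ell$.

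The increments are telescoped sums. For example,
\[
\Delta_A=\sum_{i}(2-2s_{m+\ell+2-2i}),
\]
with the sum taken over the appropriate parity class of indices from $m+1$ to $m+\ell+2$. The other increment is the analogous sum over the complementary class.

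The main obstacle is showing that these sums depend only on the type $r$, and not on the context in which $W_r$ occurs. The indices of $s$ that are needed run over $s_{m+1},\dots,s_{m+\ell+2}$. Because $C_7(n)$ records $s_{n-2},\dots,s_{n+2}$, all of these bits are stored inside the $C_7$-letters of $W_r$ at positions $n=m,\dots,m+\ell-1$: for $n\le m+\ell-1$ the window reaches $s_{n+2}$ up to $s_{m+\ell+1}$. The last bit $s_{m+\ell+2}$ is the exceptional case and needs care. I would handle it either by checking that the bits entering $\Delta_A$ and $\Delta_B$ stop at index $m+\ell+1$, or by noting that it is the bit at position $n+2$ read through the overlap with the next block. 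If neither works, the fallback is to shift the indexing so that only $s_{m+1},\dots,s_{m+\ell+1}$ enter.

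Once this is settled, $\Delta_A(r)$ and $\Delta_B(r)$ are finite functions of the word $W_r$. The certificate stores exactly these values, and the lemma is verified by comparing the stored transforms against the recomputed ones for all 13 types.
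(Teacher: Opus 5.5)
Your route is the paper's: sum $k_{j+2}=k_j+2(1-s_{j-1})$ over the two alternating index classes spanned by $W_r$, note that odd $\ell$ interchanges the classes, check that the $C_7$-letters of $W_r$ contain every summand, and compare the resulting transforms with the stored table. The parity and swap part of your argument is right. However, the ``main obstacle'' you leave open comes from an off-by-one slip, so as written the key claim that $\Delta_A(r),\Delta_B(r)$ depend only on $W_r$ is not established.

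Apply your own identity $k_{j+2}=k_j+2-2s_{j-1}$ with $j=n+3$. The step from marker $n$ to $n+1$ is then $(k_{n+4},k_{n+3})\mapsto(k_{n+5},k_{n+4})$ with $k_{n+5}=k_{n+3}+2-2s_{n+2}$, not $2-2s_{n+1}$. Summing over $n=m,\dots,m+\ell-1$:
\begin{itemize}
\item for even $\ell$, $k_{m+\ell+4}-k_{m+4}$ collects $2(1-s_i)$ for $i=m+3,m+5,\dots,m+\ell+1$, and $k_{m+\ell+3}-k_{m+3}$ collects $i=m+2,m+4,\dots,m+\ell$;
\item for odd $\ell$, $k_{m+\ell+4}-k_{m+3}$ collects $i=m+2,m+4,\dots,m+\ell+1$, and $k_{m+\ell+3}-k_{m+4}$ collects $i=m+3,m+5,\dots,m+\ell$.
\end{itemize}
So the bits that enter are exactly $s_{m+2},\dots,s_{m+\ell+1}$, not $s_{m+1},\dots,s_{m+\ell+2}$.

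Since $C_7(n)$ stores $s_{n-2},\dots,s_{n+2}$, the letters $C_7(m),\dots,C_7(m+\ell-1)$ store $s_{m-2},\dots,s_{m+\ell+1}$. The top bit is the $s_{n+2}$-slot of the last letter, as you yourself observed. There is no exceptional bit $s_{m+\ell+2}$ and no lookahead. This is precisely the paper's remark that the seven-bit overlaps determine all summands from the word itself.

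You should drop both of your alternatives.
\begin{itemize}
\item Reading a bit ``through the overlap with the next block'' would make the transform depend on the successor type, which a per-type table cannot express. (It would happen to be harmless, since every return word begins with the letter $84$, forcing $s_{m'+2}=1$ at each marker $m'$, but nothing of the kind is needed.)
\item ``Shifting the indexing'' is not a free choice: the indices are fixed by \eqref{eq:headupdate} and \eqref{eq:markersync}.
\end{itemize}
With the corrected range, your final step completes the proof as in the paper: recompute the $13$ transforms from the stored words and compare them with the cursor table.
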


\begin{proof}
Equation \eqref{eq:A} gives
\[
k_{j+2}=k_j+2(1-s_{j-1}).
\]
Summing this identity over the alternating indices covered by \(W_r\)
gives the two displayed increments; an odd word interchanges the two
parity classes.  The seven-bit overlaps determine all summands from the
word itself.  The finite alignment checker derives the \(13\) transforms
independently from the stored \(C_7\)-words and compares them with the
cursor table, while the kernel-domain audit compares the resulting
distances with all \(122\) rules.  There are no discrepancies.  Lemma
\ref{lem:alignment} supplies the induction base at marker \(78\).
\end{proof}

\begin{proposition}[Finite semantic certificate]\label{prop:finite-semantic}
The archived finite checker verifies:
\begin{enumerate}
\item all 21 codes satisfy \eqref{eq:semanticbit};
\item the 13 return words have consistent overlaps;
\item the 29 allowed word pairs reduce to 52 atom rules;
\item all 28\,025 semantically consistent A/B-window combinations satisfy
the local transition condition;
\item every active even-target successor and every odd swap belongs to the
declared finite domain.
\item the rule selector has \(92\) state/current/next keys, of which
\(24\) are ambiguous before the source streams are inspected.  These
\(24\) keys contain \(36\) alternative rule pairs, and none is
simultaneously prefix-compatible on both the A- and B-source bridges;
hence fixed physical source streams select at most one rule.
\end{enumerate}
No violation is found.  The summary records
\[
 \texttt{ambiguous\_keys}=24,\qquad
 \texttt{simultaneously\_prefix\_compatible\_rule\_pairs}=0.
\]
It is stored in
\path{data/phase17_rule_selector_audit.json}.  The complete
list of all \(36\) compared pairs, including both prefix tests, is stored
in \path{data/phase17_rule_selector_pairs.csv}.
\end{proposition}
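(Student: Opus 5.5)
The six items of Proposition~\ref{prop:finite-semantic} are finite exact checks over the stored kernel data, so I would prove them by exhaustive enumeration, taking them in the order listed, since each later item uses the objects validated by the earlier ones. For item~1, decode each of the 21 codes into its seven bits \((s_{n-2},s_{n-1},s_n,s_{n+1},s_{n+2},s_{T_n+1},s_{T_{n-1}+2})\). I would then evaluate the right-hand side of \eqref{eq:semanticbit} and compare it with the fourth bit. By Lemma~\ref{lem:bad}, this simultaneously shows that the pattern \eqref{eq:bad} never occurs among the codes.

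For item~2, consecutive \(C_7\)-symbols share a four-bit window of the \(s\)-sequence, namely bits \(s_{n-1},\dots,s_{n+2}\) of \(C_7(n)\) against \(s_{n-2},\dots,s_{n+1}\) of \(C_7(n+1)\). I would check this overlap inside each of the 13 words of Appendix~\ref{app:returnwords}, and also across the junction of every declared adjacent pair. That second check yields the 29 allowed word pairs.

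For item~3, each allowed pair \((r,r')\) would be reduced to atom rules. An atom records the target word together with the source-word bridges read by the heads, the cursor transform of Lemma~\ref{lem:cursorsync}, and the entry and exit offsets. I would show by deduplication that exactly 52 distinct atoms arise.

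For item~4, I would enumerate every A-window and B-window combination, i.e.\ every pair of source-word factors starting at the declared offsets and of the length the target word consumes. Each combination would first be filtered for semantic consistency, meaning that the source bits agree with the overlap constraints of item~2. I then expect exactly 28\,025 survivors. For each survivor, I would recompute the last two bits of every target \(C_7\)-symbol from the source bits that the heads actually read, with head motion given by \eqref{eq:headupdate}, and check that they match the stored word.

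Item~5 is a closure check on the rule table. For each of the 122 rules I would apply the transition and the odd swap for types \(1,4,10,12\), recompute the state code \eqref{eq:statecode}, and verify that it lies among the 92 states.

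The main obstacle is item~6, the determinism of rule selection. The difficulty is that the key (state, current type, next type) does not in general determine the rule. For each of the 24 ambiguous keys and each of its 36 alternative pairs, I would compare the two A-bridges as words in the source alphabet. I would do the same for the two B-bridges. The aim is to show that in every pair at least one side is not prefix-comparable, so the two rules diverge on at least one physical source stream. Consequently a fixed pair of source streams can be compatible with at most one rule. The delicate point is that the prefix test must be carried out on the physical streams after the odd role swap rather than on the logical A/B labels; otherwise a spurious compatibility could be missed. I would therefore apply the swap before comparing, and record both prefix tests for every pair, which gives the stated audit counts.
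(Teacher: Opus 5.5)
Your overall strategy matches the paper's. The proposition is a report of the archived exhaustive computation, and the paper gives no argument beyond the checker, so six finite enumerations is the right structure. One step, however, cannot do what you claim.

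\textbf{The junction test does not produce the 29 pairs.} In item~2 you say that checking the four-bit overlap across the junction of adjacent words ``yields the 29 allowed word pairs.'' It cannot. Every return word in Appendix~\ref{app:returnwords} begins with $84\,43$ and ends with either $43$ or $107$. The $s$-windows of these codes are
\begin{itemize}
\item $(0,1,0,1,0)$ for $43$,
\item $(1,1,0,1,0)$ for $107$,
\item $(1,0,1,0,1)$ for $84$.
\end{itemize}
So the last four bits of every final code, $(1,0,1,0)$, coincide with the first four bits of $84$. Hence all $13^2=169$ ordered pairs pass the junction test, and the test cannot single out $29$ of them. If you defined the allowed pairs this way, item~3 would be run on the wrong set and would not return the count $52$. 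The $29$ pairs have to be taken from the certificate's declared adjacency data, for instance the current/next target-type pairs occurring in the rule table. The overlap test then serves only as a consistency check on those pairs. This is easy to repair, but as written the step would fail.

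\textbf{Smaller points.} The ``delicate point'' you raise in item~6 is vacuous. The two rules of an ambiguous pair share their state/current/next key, so they have the same current target type and the same odd-swap flag. Any logical-to-physical relabelling therefore acts identically on both rules and cannot change whether the pair is simultaneously prefix-compatible. The content of item~6 is just the $36$ pairwise prefix tests, plus your correct observation that pairwise incompatibility within each key forces at most one matching rule. Finally, in item~2 the bits of $C_7(n+1)$ that meet $s_{n-1},\dots,s_{n+2}$ of $C_7(n)$ are its first four bits. These are $s_{n-1},\dots,s_{n+2}$ themselves, not $s_{n-2},\dots,s_{n+1}$.
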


\begin{remark}
Proposition~\ref{prop:finite-semantic} is a finite statement.  It does not
by itself prove that the infinite construction remains in the finite
domain.  That global membership is proved in
Sections~\ref{sec:rankgrammar}--\ref{sec:globalgluing}.
\end{remark}

\section{Return words, source bridges, and exact composition}
\label{sec:returnkernel}

A local source bridge is stored inclusively.  If
\[
u=u_0u_1\cdots u_r,\qquad
v=v_0v_1\cdots v_s,\qquad u_r=v_0,
\]
define
\begin{equation}\label{eq:overlap}
u\odot v=u_0u_1\cdots u_rv_1\cdots v_s.
\end{equation}
This reads the shared boundary block exactly once.

The complete certificate judgement includes the target type \(\lambda\)
immediately following the last letter of \(w\).  We write
\[
z\xRightarrow[w\mid\lambda]{u,v;\eps}z'
\]
when the 122-rule kernel reads the complete target word \(w\), starting
at synchronized state \(z\), using physical source words \(u\) and \(v\),
ending at \(z'\), with final physical orientation
\(\eps\in\{0,1\}\).  Here \(\eps=0\) preserves the physical A/B roles and
\(\eps=1\) swaps them.  In later displays the terminal lookahead is
suppressed when it is forced by the first type of the following displayed
factor; every finite certificate row stores it explicitly.

\begin{lemma}[Exact composition]\label{lem:composition}
Suppose the terminal lookahead of the first judgement is the first target
type of \(w_2\), the terminal lookahead of the second judgement is fixed,
and both pairs of inclusive source bridges are compatible.  If
\[
z\xRightarrow[w_1]{u_1,v_1;\eps_1}z_1,
\qquad
z_1\xRightarrow[w_2]{u_2,v_2;\eps_2}z_2.
\]
If \(\eps_1=0\), then
\[
z\xRightarrow[w_1w_2]
{u_1\odot u_2,\ v_1\odot v_2;\ \eps_2}z_2.
\]
If \(\eps_1=1\), then
\[
z\xRightarrow[w_1w_2]
{u_1\odot v_2,\ v_1\odot u_2;\ 1-\eps_2}z_2.
\]
\end{lemma}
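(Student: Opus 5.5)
The plan is to unfold both judgements into their underlying sequences of single kernel steps and concatenate them; the composition law is then a matter of bookkeeping. By definition, \(z\xRightarrow[w_1]{u_1,v_1;\eps_1}z_1\) means there is a chain of the 122 rules, one per letter of \(w_1\), starting at \(z\) and ending at \(z_1\). Each rule's inclusive A- and B-source bridges concatenate under \(\odot\) to \(u_1\) and \(v_1\) respectively. The physical orientation is toggled exactly at the odd swap types \(1,4,10,12\), and \(\eps_1\) is the parity of the number of such toggles. The same description holds for the second judgement. The terminal lookahead of the first judgement is the first type of \(w_2\). Hence the rule applied to the last letter of \(w_1\) is exactly the one the kernel selects in the concatenated reading, since its next-target type agrees. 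The hypothesis that the second lookahead is fixed carries over as the lookahead of the composite. By Proposition~\ref{prop:finite-semantic}(6), fixed physical source streams select at most one rule. So the concatenated chain is the unique kernel reading of \(w_1w_2\) from \(z\), and it ends at \(z_2\).

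Next I would check the source bridges at the junction. The last source block read in the first judgement on a given physical stream is the block in which the synchronized state \(z_1\) sits, together with its offset. The first block read by the second judgement on the stream that \(z_1\) labels accordingly is that same block. This is precisely the compatibility hypothesis, \(u_r=v_0\) in \eqref{eq:overlap}. The inclusive convention reads the shared boundary block exactly once, so gluing with \(\odot\) gives the correct physical source word. I will use associativity of \(\odot\) on compatible words, which is immediate from \eqref{eq:overlap}, to regroup the rule-by-rule bridges.

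The case split on \(\eps_1\) is the main point. The state \(z_1\) and the second judgement are written in \emph{logical} A/B coordinates relative to the orientation at the start of \(w_2\). If \(\eps_1=0\), logical A in the second judgement is the same physical stream as logical A in the first. The physical words are then \(u_1\odot u_2\) and \(v_1\odot v_2\), and the net orientation is \(\eps_2\). If \(\eps_1=1\), the logical A-head of the second judgement runs on the physical stream that was B at the start. Thus the first stream continues with \(v_2\) and the second with \(u_2\), giving \(u_1\odot v_2\) and \(v_1\odot u_2\). The composite orientation is \(\eps_1+\eps_2 \bmod 2\), and I would verify that this equals the stated \(1-\eps_2\).

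I expect the main obstacle to be this orientation bookkeeping: showing that the logical/physical convention used in each stored judgement is the one under which compatibility is stated. The convention has to be recorded relative to the judgement's own initial orientation, and I would fix it explicitly before the argument. A secondary point is making sure the lookahead hypothesis really forces the same last rule. For this I would invoke the rule-selector uniqueness in Proposition~\ref{prop:finite-semantic}.
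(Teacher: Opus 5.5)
Your proposal is correct and is essentially the paper's own argument. The paper's proof does exactly this junction bookkeeping: each rule fixes which physical stream feeds the logical A/B bridges, the second judgement's logical roles are reversed when \(\eps_1=1\), \(\odot\) removes the duplicated boundary block, and associativity handles longer compatible chains. The appeal to selector uniqueness in Proposition~\ref{prop:finite-semantic}(6) is harmless but not needed, since the lemma only asserts that the concatenated chain is a valid reading. Likewise, your opening description of \(u_1,v_1\) as plain \(\odot\)-products of the rules' A- and B-bridges should be read modulo swaps inside \(w_1\), which you in fact handle correctly at the junction.
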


\begin{proof}
Every local rule specifies which physical stream supplies the logical
A- and B-bridges.  In the role-preserving case, the second transduction
continues on the same physical streams.  In the role-swapping case, the
logical roles of the second transduction are reversed.  Formula
\eqref{eq:overlap} removes the duplicated boundary block.  Associativity
on compatible chains follows immediately.
\end{proof}

Lemma~\ref{lem:composition} is the algebraic basis of all later word
gluing.

\section{Rank words and the epoch grammar}\label{sec:rankgrammar}

For \(m\ge0\), define
\begin{align}
\pi_m&=(1,3,5,\ldots)\,(\ldots,6,4,2),\label{eq:pi}\\
\rho_m&=\rev(\pi_m).\label{eq:rho}
\end{align}
The first part of \(\pi_m\) contains the odd integers at most \(m\) in
increasing order, and the second part contains the even integers at most
\(m\) in decreasing order.

For a positive integer \(r\), write \(0^r1\) for a run of \(r\) zeros
followed by \(1\), and define
\begin{equation}\label{eq:PQ}
P_m=\prod_{r\in\pi_m}0^r1,\qquad
Q_m=\rev(P_m).
\end{equation}
If \(\phi_a\) replaces every \(1\) by \(a\) and leaves \(0\) fixed, define
\begin{equation}\label{eq:HK}
H_m=\prod_{r\in\rho_m}5\,\phi_2(P_r),
\qquad
K_m=\prod_{r\in\pi_m}\phi_2(Q_r)\,8.
\end{equation}

The four polynomial gap families are
\begin{align}
G_{3A}(k)&=10\prod_{m\in\rho_{k+2}}9\,\phi_3(P_m),\label{eq:G3A}\\
G_{3B}(k)&=12\prod_{m\in\pi_{k+2}}\phi_3(Q_m)\,6,\label{eq:G3B}\\
G_{4A}(k)&=4\prod_{m\in\pi_{k+2}}H_m\,11,\label{eq:G4A}\\
G_{4B}(k)&=1\prod_{m\in\rho_{k+2}}7\,K_m.\label{eq:G4B}
\end{align}
Their lengths are
\[
|G_{3A}(k)|=|G_{3B}(k)|=\binom{k+5}{3},
\qquad
|G_{4A}(k)|=|G_{4B}(k)|=\binom{k+6}{4}.
\]

\subsection{Rank words}

Let
\[
U(r)=\pi_r\,0,\qquad V(r)=0\,\rho_r,
\]
extended by concatenation to words.  Define boundary words
\[
E_n=(2n-2,2n-4,\ldots,0),
\qquad
O_n=(2n-1,2n-3,\ldots,1).
\]

Define the rank words noncircularly by
\[
A_1=(0),\qquad B_2=(1,0),
\]
and, successively for \(n\ge2\), by
\begin{equation}\label{eq:rankfactorA}
A_n=E_n\,U(B_n),
\end{equation}
followed by
\begin{equation}\label{eq:rankfactorB}
B_{n+1}=O_n\,V(A_n).
\end{equation}
Thus \(B_n\) is already available when \(A_n\) is defined, and \(A_n\)
is available when \(B_{n+1}\) is defined.  Equations
\eqref{eq:rankfactorA}--\eqref{eq:rankfactorB} are therefore an all-level
definition, not an extrapolation from computed rank words.

For completeness, we now give the equivalent insertion recursion used by
the checker.  Define the two boundary tails
\begin{align}
T_A(n)&=0^1 1\,0^3 1\cdots0^{2n-3}1\,0^n,
\qquad n\ge2,\label{eq:ranktailA}\\
T_B(n)&=1\,0^2 1\cdots0^{2n-4}1\,0^{n-1},
\qquad n\ge3.\label{eq:ranktailB}
\end{align}

Let \(W=(w_1,\ldots,w_\ell)\) and let
\[
R(W)=(r_1,\ldots,r_t)
\]
be the list of its positive entries read from right to left.

\begin{definition}[A-rank insertion]\label{def:Ainsert}
Assume \(W\) has one more zero than positive entries.  Traverse \(W\) from
left to right.  Output \(w_i+2\) for every entry.  After the \(j\)-th zero,
append \(P_{r_j}\) when \(j<t+1\), and append \(T_A(n)\) after the last
zero.  The resulting word is denoted \(\mathfrak A_n(W)\).
\end{definition}

\begin{definition}[B-rank insertion]\label{def:Binsert}
Assume \(W\) has equally many zeros and positive entries.  Let
\(R(W)=(r_1,\ldots,r_t)\).  Replace every \(r_j=1\) by \(0\), discard
the first resulting parameter, and call the remaining ordered list
\((q_1,\ldots,q_{t-1})\).  Traverse \(W\) from left to right.
Output \(w_i+2\) for every entry; after every old \(1\), append \(Q_1\);
after the \(j\)-th zero append \(Q_{q_j}\) for \(1\le j<t\), and append
\(T_B(n)\) after the last zero.  The resulting word is denoted
\(\mathfrak B_n(W)\).
\end{definition}

The factor-defined rank words also obey the insertion recursions
\begin{equation}\label{eq:rankrecursion}
A_1=(0),\quad A_n=\mathfrak A_n(A_{n-1})\ (n\ge2),
\qquad
B_2=(1,0),\quad B_n=\mathfrak B_n(B_{n-1})\ (n\ge3).
\end{equation}
The balance assumptions in Definitions~\ref{def:Ainsert} and
\ref{def:Binsert} are preserved by the recursion; equivalently,
\[
\#0(A_n)=\#\{A_n>0\}+1,\qquad
\#0(B_n)=\#\{B_n>0\}.
\]
The implementation in \path{code/rank_words.py} is a direct
translation of these definitions.

\begin{lemma}[Order shifts]\label{lem:ordershifts}
For \(m\ge1\),
\begin{equation}\label{eq:ordershift1}
(\pi_m[1:])-1=\rho_{m-1},
\end{equation}
and
\begin{equation}\label{eq:ordershift2}
(\rho_m[:-1])-1=\pi_{m-1},
\end{equation}
where \(1\) is subtracted componentwise.
\end{lemma}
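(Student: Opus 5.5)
The plan is to compare both sides of \eqref{eq:ordershift1} and \eqref{eq:ordershift2} block by block, using only the definition \eqref{eq:pi}--\eqref{eq:rho}. I read \(\pi_m\) as a word over \(\{1,\ldots,m\}\), with \(0\) excluded, consisting of two monotone blocks:
\[
\pi_m=(\text{odd }j\le m,\ \text{increasing})\,(\text{even }j\le m,\ j\ge2,\ \text{decreasing}).
\]
Reversing gives
\[
\rho_m=(\text{even }j\le m,\ j\ge2,\ \text{increasing})\,(\text{odd }j\le m,\ \text{decreasing}).
\]
For \(m\ge1\) the first letter of \(\pi_m\) is always \(1\), which is the smallest odd number. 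Likewise the last letter of \(\rho_m\) is always \(1\). This is why the slices \([1:]\) and \([:-1]\) are well defined and each removes exactly the letter \(1\).

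For \eqref{eq:ordershift1}, after removing the leading \(1\) from \(\pi_m\) I am left with two blocks. The first is the odd numbers \(3,5,\ldots\le m\) in increasing order; subtracting \(1\) turns it into the even numbers \(2,4,\ldots\le m-1\) in increasing order. The second is the even numbers \(\le m\) with \(j\ge2\) in decreasing order; subtracting \(1\) turns it into the odd numbers \(1,3,\ldots\le m-1\) in decreasing order. The parity shift maps the bound \(j\le m\) exactly onto \(j-1\le m-1\), and the lower bounds \(j\ge3\) and \(j\ge2\) become \(j-1\ge2\) and \(j-1\ge1\). Componentwise subtraction preserves monotonicity, so the result is precisely the block description of \(\rho_{m-1}\).

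For \eqref{eq:ordershift2} the argument is symmetric. Removing the trailing \(1\) from \(\rho_m\) leaves two blocks. The even numbers \(\ge2\) and \(\le m\), in increasing order, become the odd numbers \(\le m-1\) in increasing order. The odd numbers \(\ge3\) and \(\le m\), in decreasing order, become the even numbers \(\ge2\) and \(\le m-1\) in decreasing order. Together these give \(\pi_{m-1}\).

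I do not expect any real obstacle; the only care needed is at the boundary cases. The case \(m=1\) gives \(\pi_1=\rho_1=(1)\), so both sides are empty, and \(\pi_0=\rho_0\) is the empty word. When a block is empty, the empty word must be allowed, which the block description handles uniformly. Finally, I will confirm that \(0\) never occurs in \(\pi_{m-1}\) or \(\rho_{m-1}\): the word \(\pi_m\) contains no entry below \(1\) other than the removed letter, so subtracting \(1\) from the remaining letters produces no \(0\).
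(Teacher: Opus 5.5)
Your proof is correct and follows essentially the same route as the paper: delete the letter \(1\), then observe that subtracting \(1\) turns the ascending odd block and descending even block into the ascending even and descending odd blocks of the reversed word. The paper treats \(m=2q\) and \(m=2q+1\) separately, while your bounds \(j\le m\mapsto j-1\le m-1\) cover both parities at once; this difference is only cosmetic.
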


\begin{proof}
Write separately \(m=2q\) and \(m=2q+1\).  Removing the initial or terminal
\(1\) deletes the outer parity level.  Subtracting \(1\) interchanges the
ascending odd and descending even blocks, giving the reverse order.
\end{proof}

\begin{theorem}[Equivalence of the rank constructions]\label{thm:rankfactor}
The factor recursion \eqref{eq:rankfactorA}--\eqref{eq:rankfactorB}
satisfies the balance identities above and is equivalent to the insertion
recursion \eqref{eq:rankrecursion} at every stated level.
\end{theorem}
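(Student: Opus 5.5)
The plan is a simultaneous induction on \(n\), carrying three claims at each level: the balance identities for \(A_n\) and \(B_{n+1}\), and the equalities \(A_n=\mathfrak A_n(A_{n-1})\) and \(B_{n+1}=\mathfrak B_{n+1}(B_n)\) whenever these are defined. The base is a direct check. From \(A_1=(0)\) and \(B_2=(1,0)\) one computes \(A_2=E_2\,U(B_2)=(2,0)\,\pi_1 0\,\pi_0 0=(2,0,1,0,0)\). This is compared with \(\mathfrak A_2((0))=(2)\,T_A(2)=(2,0,1,0,0)\), and \(B_3=O_2V(A_2)\) is treated in the same way.

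\textbf{Balance.} First I would count zeros and positive entries in the factor definitions. Each letter \(r\) of \(B_n\) contributes \(U(r)=\pi_r0\), which has \(\lfloor r\rfloor\) positive entries and exactly one zero (with \(\pi_0\) empty). The prefix \(E_n\) contributes \(n-1\) positive entries and one zero. Hence
\[
\#0(A_n)-\#\{A_n>0\}=1+|B_n|-(n-1)-\textstyle\sum_{b\in B_n}b .
\]
An analogous identity holds for \(B_{n+1}\). I would therefore strengthen the inductive hypothesis to also record \(|B_n|\) and \(\sum B_n\), or better, to record directly the zero and positive counts. These follow from the previous balance together with the fact that the letters of \(B_n\) are exactly the entries of the previous word shifted by \(+1\) through \(\rho\) and \(\pi\), using Lemma~\ref{lem:ordershifts}.

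\textbf{Structural equivalence.} The central observation is that applying \(U\) to a letter \(b+2\) produces \(\pi_{b+2}0\). By Lemma~\ref{lem:ordershifts}, the word \(\pi_{b+2}\) has the form \(1\,(\rho_{b+1}+1)\), and \(\rho_{b+1}\) ends in a letter \(1\) followed by \(\pi_b+2\)-type material. Unwinding this twice shows that \(\pi_{b+2}\) is \(\pi_b+2\) with extra small letters interleaved. Concretely, \(U(B_n)\) equals the letters of \(U(B_{n-1})\) shifted by \(+2\), with inserted blocks of the form \(0^r1\) occurring exactly after zeros. Since \(B_n=O_{n-1}V(A_{n-1})\) and \(V\) uses \(\rho\), composing \(U\circ V\) turns each letter \(a\) of \(A_{n-1}\) into a shifted copy of \(a\) followed by \(P_{a'}\) for the appropriate rank \(a'\). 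The right-to-left reading order of \(R(W)\) in Definition~\ref{def:Ainsert} corresponds to the reversal \(\rho=\rev(\pi)\) inside \(V\). The boundary parts \(E_n\) and \(O_{n-1}\) are handled separately: \(U(O_{n-1})\) with \(\pi\) of odd letters produces exactly the tail \(T_A(n)=0^11\,0^31\cdots0^{2n-3}1\,0^n\) after the final zero, while \(E_n\) is \(E_{n-1}+2\) prefixed appropriately. The \(B\)-side argument is symmetric using \(V\circ U\). It has one extra feature: letters equal to \(1\) give \(\rho_1=(1)\), producing the \(Q_1\) insertions. The parameter shift "replace \(1\) by \(0\), discard the first" comes from the leading \(0\) in \(V(r)=0\rho_r\), which displaces the correspondence between zeros and parameters by one.

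I expect the main obstacle to be the precise bookkeeping of the insertion positions, that is, showing that the \(j\)-th zero receives \(P_{r_j}\) with \(r_j\) indexed right-to-left. I would first prove a single clean lemma: for every \(r\ge0\),
\[
U(V(r))=(\text{letters } \pi_r+2 \text{ interleaved})\cdots,
\]
stated as an explicit factorization \(U(0\rho_r)=0\;\prod_{x\in\rho_r}\pi_x0\), identified with the block structure of \(P_r\) and \(Q_r\). I would verify this lemma by separate parity cases \(r=2q\) and \(r=2q+1\), exactly as in Lemma~\ref{lem:ordershifts}. Then both equivalences follow by concatenating over letters, since \(U\) and \(V\) are morphisms.
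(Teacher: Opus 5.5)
Your proposal has a genuine gap in the structural step: the mechanism you rely on is false. The composite \(U\circ V\) does not turn a letter \(a\) of \(A_{n-1}\) into a shifted copy of \(a\) followed by a \(P\)-block. For instance, \(U(V(0))=(0)\) contains no \(2\). Also \(U(V(2))=(0,1,2,0,1,0)\) contains no \(4\) and is not a \(\{0,1\}\)-word, so your key lemma cannot identify \(U(0\rho_r)=0\prod_{x\in\rho_r}\pi_x0\) with \(P_r\) or \(Q_r\).

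Look at \(A_3=E_3\,U(O_2)\,U(V(A_2))=(4,2,0)(1,3,2,0,1,0)(0,1,2,0,1,0,0,0,1,0,0,0)\). The shifted letters \(4,2,3,2,2\) of \(A_2=(2,0,1,0,0)\) lie in \(E_3\), in \(U(3)\) and in \(U(V(2))\), so the factors \(U(V(a))\) are not aligned with the letters of \(A_{n-1}\). Likewise \(U(O_{n-1})\) sits right after \(E_n\) and contains letters up to \(2n-3\). It is not the source of \(T_A(n)\); that tail is the final factor \(0\,U(T_B(n))\).

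Your observation \(\pi_{b+2}=1(\pi_b+2)2\), i.e.\ \(U(b+2)=1\,(U(b)+2)\,0\), is the right tool. It has to be applied to the letters \(b+2\) of \(B_n=\mathfrak B_n(B_{n-1})\), so the \(A\)-statement at level \(n\) comes from the \(B\)-statement at level \(n\), not from \(A_{n-1}\) via \(V\). Write \(A_n=(E_{n-1}+2)\,0\,U(\mathfrak B_n(B_{n-1}))\) and regroup. Between consecutive shifted blocks \(U(b_i)+2\) you find \(0\,U(\iota_i)\,1\), where \(\iota_i\) is the block \(\mathfrak B_n\) appends after \(b_i\). The identities \(0\,1=P_1\), \(0\,U(Q_q)\,1=P_{q+1}\) and \(0\,U(T_B(n))=T_A(n)\) then give the shape of \(\mathfrak A_n(A_{n-1})\). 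Dually, \(V(a+2)=0\,(V(a)+2)\,1\), \(1\,V(P_r)\,0=Q_{r+1}\) and \(1\,V(T_A(n))=T_B(n+1)\) handle the \(B\)-step.

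Even then the indices must match, and your sketch does not address this. For \(B_{n-1}=(b_1,\ldots,b_N)\) you need
\(R(A_{n-1})=(1,g(b_1),\ldots,g(b_{N-1}))\), where \(g(b)=1\) for \(b\ge2\), \(g(1)=2\), and \(g(b)=q_k+1\) when \(b\) is the \(k\)-th zero. You also need the analogous statement for the \(q_k\). This reversal duality, between \(A_{n-1}\) read right to left and \(B_{n-1}\) read left to right, is the step the paper handles by identifying the remaining lists with the ``reversed lower-level parameter lists''. Saying that right-to-left reading corresponds to \(\rho=\rev(\pi)\) inside \(V\) does not prove it.

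The balance argument also does not close as planned. Under the factor recursion alone, \(\sum B_{n+1}=n^2+\sum_{a\in A_n}a(a+1)/2\), so recording \(|B_n|\) and \(\sum B_n\) forces second moments, then ever higher ones. Linear statistics suffice only once the insertion forms are available. First, \(\mathfrak A_n\) gives \(\sum A_n=2\#\{A_n>0\}-(n-1)\), and \(\mathfrak B_n\) gives the same for \(B_n\), because the discarded first parameter equals \(0\). Then the factor counts \(\#0(A_n)=1+|B_n|\), \(\#\{A_n>0\}=n-1+\sum B_n\), \(\#0(B_{n+1})=|A_n|\) and \(\#\{B_{n+1}>0\}=n+\sum A_n\) turn balance of \(B_n\) into balance of \(A_n\), and that into balance of \(B_{n+1}\). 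So balance has to be interleaved with the structural equivalence rather than proved before it.
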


\begin{proof}
Induct on \(n\).  In \(E_nU(B_n)\), separate the leading member of every
\(\pi_m\); in \(O_nV(A_n)\), separate the trailing member of every
\(\rho_m\).  Lemma~\ref{lem:ordershifts} identifies the remaining lists
with the reversed lower-level parameter lists used by
\(\mathfrak A_n\) and \(\mathfrak B_n\).  The separated terms concatenate
to the shifted old entries, while the final unmatched terms are exactly
\(T_A(n)\) and \(T_B(n)\).  This proves the two insertion formulas and,
by counting zeros before and after the decomposition, the two balance
identities.  The bases are \(A_1=(0)\) and \(B_2=(1,0)\).  The checker
\path{code/rank_words.py} independently compares both
constructions at finite diagnostic levels; it is not used to quantify the
induction.
\end{proof}

\subsection{Token paths and epochs}

For a rank word \(W\), let \(\theta(W)\) be its path on two vertices,
recording whether the current rank is zero.  Starting at the nonzero
vertex, use the four tokens
\[
0,\quad U,\quad D,\quad1
\]
for the transitions
\[
+\!\to+,\quad +\!\to0,\quad0\!\to+,\quad0\!\to0.
\]
Each token expands to the factor in the tables below.  The rank parameter
is assigned as follows.  For \(\AEpoch(n)\), use the token path
\(\theta(A_{n+1})\); its \(0\)-tokens receive the entries of \(A_n\) from
left to right, and its \(1\)-tokens receive the entries of \(A_n\) from
right to left.  The balance part of Theorem~\ref{thm:rankfactor} says that
both assignments are exhaustive.

For the A-component:
\begin{center}
\begin{tabular}{ccl}
\toprule
token & path & target factor\\
\midrule
\(0\) & \(2\to4\to2\) & \(C_{2,4}G_{3A}(r)\)\\
\(U\) & \(2\to3\to8\) & \(C_{2,3}C_{3,8}\)\\
\(D\) & \(8\to10\to2\) & \(C_{8,10}C_{10,2}\)\\
\(1\) & \(8\to9\to8\) & \(G_{4A}(r)C_{9,8}\)\\
\bottomrule
\end{tabular}
\end{center}

For the B-component:
\begin{center}
\begin{tabular}{ccl}
\toprule
token & path & target factor\\
\midrule
\(0\) & \(0\to6\to0\) & \(G_{3B}(r)C_{6,0}\)\\
\(U\) & \(0\to5\to7\) & \(C_{0,5}C_{5,7}\)\\
\(D\) & \(7\to12\to0\) & \(C_{7,12}C_{12,0}\)\\
\(1\) & \(7\to11\to7\) & \(C_{7,11}G_{4B}(r)\)\\
\bottomrule
\end{tabular}
\end{center}

For \(\BEpoch(n)\), use \(\theta(B_{n+1})\) followed by one terminal
\(1\)-token.  Its first token is a boundary half-loop and expands only to
\(C_{6,0}\); the terminal token expands only to \(C_{7,11}\).  After
removing these two half-loops, the remaining \(0\)-tokens receive the
entries of \(B_n\) from left to right and the remaining \(1\)-tokens the
entries of \(B_n\) from right to left.  At \(n=1\), both lists are empty.
Concatenation of the assigned target factors is, by definition, the pair
of complete epoch words
\[
\AEpoch(n),\qquad\BEpoch(n)\qquad(n\ge1).
\]
This is a total grammar at every level; the implementation is
\path{code/epoch_words.py}.

The constant words \(C_{a,b}\) and their exact source bridges are stored
in \path{data/constant_edge_words.json} and
\path{data/constant_odd_edge_macros.csv}.  They are finite and checked
directly.

\section{All-level stationary chunks}\label{sec:chunks}

The four nontrivial stationary transductions are:
\begin{align}
1101155&\xRightarrow[9\phi_3(P_m)]
{6\phi_3(Q_{m-1})6,\;3\,0^m3;\;0}1101155,
\tag{C3P}\label{eq:C3P}\\
17352080&\xRightarrow[6\phi_3(Q_m)]
{9\phi_3(P_{m-1})9,\;3\,0^m3;\;0}17352080,
\tag{C3Q}\label{eq:C3Q}\\
17435506&\xRightarrow[11H_m]
{7K_{m-1}7,\;8\phi_2(Q_m)8;\;0}17435506,
\tag{C4H}\label{eq:C4H}\\
610744&\xRightarrow[7K_m]
{11H_{m-1}11,\;5\phi_2(P_m)5;\;0}610744.
\tag{C4K}\label{eq:C4K}
\end{align}

The unbounded parts reduce to four zero-loop schemas.  The finite rule
identifiers in the last column refer to the 122-rule table.

\begin{table}[htbp]
\centering
\small
\begin{tabularx}{\textwidth}{l c c Y}
\toprule
schema & state & parameter & rule path\\
\midrule
P-normal & \(259\to259\) & \(d\ge1\) &
\(4,\,0^d,\,2,8,10,15\)\\
Q-normal & \(16778544\to16778544\) & \(d\ge0\) &
\(79,69,\,64^d,\,66,71,75\)\\
H-normal & \(16777474\to16777474\) & \(d\ge1\) &
\(67,\,64^d,\,65,70,73,76\)\\
K-normal & \(1312\to1312\) & \(d\ge0\) &
\(12,3,\,0^d,\,1,6,9\)\\
\bottomrule
\end{tabularx}
\caption{The four unbounded stationary loop schemas.  Exponents on rule
numbers mean repeated copies of the same local rule.}
\label{tab:stationaryloops}
\end{table}

Rules \(0\) and \(64\) are self-loops on target type \(0\).  Each copy
crosses one additional A-source zero, crosses no B-source block, and
preserves state, offsets, and orientation.

\begin{theorem}[Stationary chunk theorem]\label{thm:chunks}
Transductions \eqref{eq:C3P}--\eqref{eq:C4K} hold for every \(m\ge3\).
\end{theorem}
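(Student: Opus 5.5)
We prove the four transductions \eqref{eq:C3P}--\eqref{eq:C4K} by decomposing each target word into the zero-runs of its parameter word, treating each run with one of the loop schemas of Table~\ref{tab:stationaryloops}, and gluing the pieces with Lemma~\ref{lem:composition}. Consider \eqref{eq:C3P}. By \eqref{eq:PQ}, the target word \(9\phi_3(P_m)\) factors as
\[
9\prod_{r\in\pi_m}0^r3 .
\]
So it is a concatenation of segments: a run of type-\(0\) blocks of length \(r\), closed by the separator type \(3\), with the leading \(9\) as a fixed prefix. The P-normal schema reads one such segment (rules \(4,0^d,2,8,10,15\)) and returns to state \(259\). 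It is therefore a closed loop at the state on which we want to compose.

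The first step is to match schema parameters with run lengths. The single rule \(0\) adds exactly one type-\(0\) target block and one A-source zero. Hence an induction on \(d\), composing one more copy of rule \(0\) via Lemma~\ref{lem:composition} with \(\eps=0\), shows the following for every \(d\ge1\). The schema transduces \(0^{d+c}3\), with \(c\) a fixed constant read off from the boundary rules \(4,2,8,10,15\). Its A-bridge is \(0^{d+c'}\) framed by fixed boundary letters, and its B-bridge is a fixed word independent of \(d\). The base case is a single finite row check. The inductive step uses only the fact that rule \(0\) preserves state, offsets and orientation, so no swap bookkeeping is needed.

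The second step composes these loops over \(r\in\pi_m\) and compares the resulting bridges with the claimed source words. Each loop contributes one block of the A-source \(6\phi_3(Q_{m-1})6\), and the B-source \(3\,0^m3\) accumulates one zero per segment, since \(|\pi_m|=m\). Lemma~\ref{lem:ordershifts} is exactly what this comparison needs. Because \((\pi_m[1:])-1=\rho_{m-1}\), dropping the first segment and lowering each remaining run length by one yields the run lengths of \(Q_{m-1}=\rev(P_{m-1})\), in the order \(\rho_{m-1}\). This explains why the A-source is indexed by \(m-1\) and read in reversed order.

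The first segment (\(r=1\)) and the final closing \(3\) are handled by fixed boundary rows, which are absorbed into the outer \(6\dots6\) and \(3\dots3\) letters. The hypothesis \(m\ge3\) guarantees that these boundary configurations are the stationary ones, so the generic pattern holds. A second induction, now on \(m\) (appending one run, or one parity level, at a time), then makes the identity exact for all \(m\ge3\).

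The same pattern applies to the other three transductions. For \eqref{eq:C3Q} we use Q-normal and the identity \((\rho_m[:-1])-1=\pi_{m-1}\). For \eqref{eq:C4H} and \eqref{eq:C4K}, H-normal and K-normal handle the innermost runs. There the outer product over \(r\in\rho_m\), respectively \(\pi_m\), of \(5\phi_2(P_r)\), respectively \(\phi_2(Q_r)8\), is itself glued by composition. Its source bridges \(7K_{m-1}7\) and \(11H_{m-1}11\) are recognized as the same words at level \(m-1\), again by Lemma~\ref{lem:ordershifts}. This requires a simultaneous induction on \(m\) for \eqref{eq:C4H} and \eqref{eq:C4K}, since each uses the other's lower level as a source word.

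The main obstacle is this bookkeeping of source words at the boundaries between segments. The inclusive overlap \(\odot\) must consume exactly one shared boundary block. The fixed constants \(c,c'\) must also line up so that the accumulated B-bridge is precisely \(3\,0^m3\), with no off-by-one error at the first and last segments. I would settle this first by writing the exact A- and B-bridges of the four schema rows symbolically in \(d\), and only then running the induction on \(m\).
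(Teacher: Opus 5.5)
Your overall route matches the paper's: you prove the loop schemas by induction on \(d\), inserting one copy of rule \(0\) or \(64\), then compose loop instances by Lemma~\ref{lem:composition} and match the source order with Lemma~\ref{lem:ordershifts}. The gap is in the boundary analysis, which is where the real content lies. P-normal is the path \(4,0^d,2,8,10,15\), so it uses \(d+5\) rules and hence reads \(d+5\) target blocks, looping at state \(259\). One instance therefore reads a segment \(0^r3\) with \(r=d+4\). Since the schema is established only for \(d\ge1\), it covers runs \(r\ge5\) and nothing shorter. Likewise Q-normal (\(d\ge0\)) reads only \(3\,0^r\) with \(r\ge4\), and H/K-normal impose the same kind of threshold on the inner runs.

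So it is not only the run \(r=1\) and the closing \(3\) that need fixed rows. Every run \(2,3,4\) of \(\pi_m\) lies outside the range in which your loop identity is proved. Your claim that \(m\ge3\) makes the boundary configurations stationary is therefore false: \(\pi_3=(1,3,2)\) and \(\pi_4=(1,3,4,2)\) contain no run long enough for P-normal, and \(\pi_5=(1,3,5,4,2)\) contains only one. Your ``fixed boundary rows'' must in fact be entry and exit paths between the chunk state \(1101155\) and the loop state \(259\) that absorb several short runs. You never specify them.

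The paper fills this with an explicit shape decomposition. For C3P with \(m\ge6\) it writes \(\pi_m=[1,3]\,X_m\,[6,4,2]\) with \(X_m\subseteq\{5,6,\ldots\}\). The front runs \(1,3\) and back runs \(6,4,2\) are covered by finitely many directly checked entry and exit paths, and only \(X_m\) is read by P-normal loops at \(259\). For C3Q with \(m\ge4\) it writes \(\rho_m=[2]\,Y_m\,[3,1]\) with \(Y_m\subseteq\{4,5,\ldots\}\). The remaining cases, C3P with \(m=3,4,5\) and C3Q with \(m=3\), are separately checked finite primitives. Your induction on \(m\) inserts the new run at the turn of \(\pi_m\). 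That is a plain schema insertion only once the turn is surrounded by long runs, so it needs exactly these finite bases too.

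Finally, the simultaneous induction of C4H with C4K is unnecessary. The words \(7K_{m-1}7\) and \(11H_{m-1}11\) appear there only as source words. Their form follows from the definitions and Lemma~\ref{lem:ordershifts}, not from the lower-level transductions. What C4H and C4K do need is the same short-run bookkeeping for the inner factors \(P_r\) and \(Q_r\), including the small \(r\) occurring in \(\rho_m\) and \(\pi_m\).
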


\begin{proof}
The finitely many entry and exit paths are checked directly.  For each
schema in Table~\ref{tab:stationaryloops}, the smallest parameter value is
checked directly.  The step \(d\mapsto d+1\) inserts one copy of rule
\(0\) or \(64\), simultaneously inserting one target zero and one
A-source zero while changing nothing else.  This proves each schema for
all \(d\).

The C3P cases \(m=3,4,5\) and the C3Q case \(m=3\) are among the
directly checked finite primitives.  For C3P with \(m\ge6\), use
\[
\pi_m=[1,3]\,X_m\,[6,4,2],\qquad X_m\subseteq\{5,6,\ldots\},
\]
whereas for C3Q with \(m\ge4\), use
\[
\rho_m=[2]\,Y_m\,[3,1],\qquad Y_m\subseteq\{4,5,\ldots\},
\]
Together with Lemma~\ref{lem:ordershifts}, these shapes concatenate the
schemas in the exact source order required by C3P and C3Q.  The
corresponding inner H- and K-decompositions are valid from \(m=3\), and
give C4H and C4K by the same argument.  All boundary words match under
\(\odot\), so Lemma~\ref{lem:composition} completes the proof.
\end{proof}

\section{Bridge tails and complete bridges}\label{sec:bridges}

Define
\begin{align}
\EA(n)&=0^1 2\,0^3 2\cdots0^{2n+1}2\,0^{2n+3},
\label{eq:EA}\\
\DA(n)&=3\,0^{2n+1}3\,0^{2n-1}\cdots3\,0^1,
\label{eq:DA}\\
\EB(n)&=2\,0^2 2\,0^4\cdots2\,0^{2n+2},
\label{eq:EB}\\
\DB(n)&=3\,0^{2n}3\,0^{2n-2}\cdots3.
\label{eq:DB}
\end{align}
The A-tail words \(\EA(n),\DA(n)\) are used for \(n\ge1\), while the
B-tail transductions for \(\EB(n),\DB(n)\) start at \(n=2\).  Their source
extensions at level one are the words given by the same formulas; we write
\[
 \widehat E_{B,2}(n):=\EB(n),\qquad
 \widehat D_{B,3}(n):=\DB(n)\qquad(n\ge1).
\]

The six unbounded tail schemas are listed in
Table~\ref{tab:tailloops}.

\begin{table}[htbp]
\centering
\small
\begin{tabularx}{\textwidth}{l c c Y}
\toprule
schema & state & range & rule path\\
\midrule
EA-middle & \(16777474\to16777474\) & \(d\ge1\) &
\(67,64^d,65,70,73,76\)\\
EA-final & \(16777474\to16778544\) & \(d\ge1\) &
\(67,64^d,66,71,75\)\\
DA-middle & \(16778544\to16778544\) & \(d\ge1\) &
\(79,69,64^d,66,71,75\)\\
EB-middle & \(1312\to1312\) & \(d\ge0\) &
\(12,3,0^d,1,6,9\)\\
EB-final & \(1312\to1328\) & \(d\ge2\) &
\(12,3,0^d,2,8,10\)\\
DB-middle & \(1328\to1328\) & \(d\ge2\) &
\(15,4,0^d,2,8,10\)\\
\bottomrule
\end{tabularx}
\caption{All unbounded linear bridge-tail schemas.}
\label{tab:tailloops}
\end{table}

\begin{theorem}[Tail theorem]\label{thm:tails}
The first two transductions below hold for every \(n\ge1\), and the last
two hold for every \(n\ge2\):
\begin{align}
16778021&\xRightarrow[\EA(n)]
{\widehat E_{B,2}(n)3,\;0^{n+2};\;0}16778544,
\label{eq:EAt}\\
16778544&\xRightarrow[\DA(n)]
{\widehat D_{B,3}(n)9,\;0^n3;\;0}17352080,
\label{eq:DAt}\\
4440&\xRightarrow[\EB(n)]
{5\,0^1 2\,0^3 2\cdots0^{2n+1}3,\;0^{n+2};\;0}1328,
\label{eq:EBt}\\
1328&\xRightarrow[\DB(n)]
{3\,0^{2n-1}3\cdots3\,0^1 6,\;0^{n-1}3;\;0}1101155.
\label{eq:DBt}
\end{align}
\end{theorem}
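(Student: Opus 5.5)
We prove each of the four tail transductions by induction on \(n\), following the pattern of Theorem~\ref{thm:chunks}. Each tail word is a concatenation of separator letters (\(2\) or \(3\)) and zero runs whose lengths change linearly with \(n\). We therefore factor each transduction into a bounded entry path, a sequence of middle loop schemas from Table~\ref{tab:tailloops}, and one final schema. These pieces are then glued with Lemma~\ref{lem:composition}.

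\textbf{Step 1: the six schemas for every parameter value.} For each row of Table~\ref{tab:tailloops} we check the smallest admissible \(d\) directly in the 122-rule table. The step \(d\mapsto d+1\) inserts one copy of the self-loop rule \(0\) or \(64\). As recorded after Table~\ref{tab:stationaryloops}, such a copy adds one target zero and one A-source zero, crosses no B-source block, and preserves state, offsets and orientation. Hence each schema holds for all \(d\) in its range, with source words that depend linearly on \(d\).

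\textbf{Step 2: factoring the tail words.} For \eqref{eq:EAt}, write \(\EA(n)\) as a bounded prefix followed by the segments \(2\,0^{2i+1}\) for \(i=1,\dots,n\) and the final run \(0^{2n+3}\). Each segment is one EA-middle instance at state \(16777474\), with \(d\) equal to the run length shifted by the fixed entry and exit contributions. The final run is one EA-final instance, which moves the state to \(16778544\). The same scheme handles the other three transductions:
\begin{itemize}
\item \(\DA(n)\): DA-middle loops at \(16778544\) with decreasing \(d\), then a bounded exit to \(17352080\);
\item \(\EB(n)\): EB-middle loops at \(1312\), then EB-final to \(1328\);
\item \(\DB(n)\): DB-middle loops at \(1328\), then a bounded exit to \(1101155\).
\end{itemize}
The finitely many entry paths (from \(16778021\) or \(4440\)) and exit paths are checked directly. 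They involve only the first and last few letters, which are independent of \(n\) once \(n\ge1\), respectively \(n\ge2\). This \(n\)-independence is also why the B-tails start at \(n=2\): the EB-final and DB-middle schemas need \(d\ge2\).

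\textbf{Step 3: induction on \(n\) and gluing.} Compare the tail at level \(n\) with the tail at level \(n+1\). Passing to level \(n+1\) adds one middle segment and lengthens each existing zero run by a fixed amount, which is an instance of Step~1. Every factor has orientation \(\eps=0\), so the composition rule of Lemma~\ref{lem:composition} reduces to \(\odot\)-concatenation of the A- and B-source words. It remains to check that the glued sources equal the displayed words.

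\textbf{Main obstacle: matching the source words.} The hard part is the bookkeeping of the glued sources, not the target side. The target factors concatenate to the tail word almost by construction. On the source side, however, the inclusive bridges share boundary letters, so the B-sources must telescope to \(0^{n+2}\), \(0^n3\), \(0^{n+2}\) and \(0^{n-1}3\). Likewise the A-sources must reproduce \(\widehat E_{B,2}(n)3\), \(\widehat D_{B,3}(n)9\), and the two explicit words in \eqref{eq:EBt}--\eqref{eq:DBt}.

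I would handle this with an explicit length count. Each middle schema contributes one separator letter together with its \(d\) A-zeros. The shared boundary block is counted once under \(\odot\). One then checks that the resulting run lengths \(2,4,\dots,2n+2\) (respectively \(2n-1,\dots,1\)) are those of the stated words, and that the B-side gains exactly one zero per level. If the offsets do not line up directly, I would carry the A-source offset \(\delta_A\) in the state code \eqref{eq:statecode} as an auxiliary invariant. This invariant fixes where each run begins inside its source block, and it is preserved because every state on the loop is fixed.
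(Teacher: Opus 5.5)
Your proposal is correct and follows the paper's own proof. You prove the six schemas of Table~\ref{tab:tailloops} by checking the least parameter and then inserting one copy of rule \(0\) or \(64\) per step, replay the bounded entry and exit paths directly, and then substitute the odd/even parameter progressions of \eqref{eq:EA}--\eqref{eq:DB} and compose with \(\odot\); every factor preserves orientation because the target letters \(0,2,3\) are not swap types.

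Two small points do not affect the argument. The separate induction on \(n\) in your Step~3 is unnecessary, because the factorization at each fixed \(n\) already gives the result. Your segment split also drops the \(2\) before \(0^{2n+3}\) in \(\EA(n)\); the exact placement of the middle instances is fixed by the rule table rather than by that split.
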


\begin{proof}
The finite entry and exit paths are replayed directly against the
122-rule table.  In each row of Table~\ref{tab:tailloops}, the smallest
parameter is checked directly.  The step \(d\mapsto d+1\) inserts one
copy of rule \(0\) or \(64\), hence one target zero and one A-source zero,
while preserving the B-stream, synchronized state, offsets, and
orientation.  Induction proves all six schemas.  Substituting the odd or
even parameter progressions occurring in \eqref{eq:EA}--\eqref{eq:DB}
and composing with \(\odot\) gives \eqref{eq:EAt}--\eqref{eq:DBt}.
\end{proof}

With a product whose upper index is smaller than its lower index read in
descending order, define the complete bridges by
\begin{align}
\Abridge_n={}&4
 \prod_{j=1}^{n}\bigl(H_{2j-1}11\bigr)
 \prod_{j=1}^{n}\bigl(5\phi_2(P_{2j})\bigr)
 5\EA(n)\DA(n)
 \prod_{j=n}^{1}\bigl(6\phi_3(Q_{2j})\bigr)6,
 \label{eq:Abridgeword}\\
\Bbridge_n={}&1
 \prod_{j=1}^{n-1}\bigl(7K_{2j}\bigr)7
 \prod_{j=0}^{n-1}\bigl(\phi_2(Q_{2j+1})8\bigr)
 \EB(n)\DB(n)
 \prod_{j=n}^{1}\bigl(9\phi_3(P_{2j-1})\bigr).
 \label{eq:Bbridgeword}
\end{align}
Here \(n\ge1\) in \eqref{eq:Abridgeword} and \(n\ge2\) in
\eqref{eq:Bbridgeword}.  These formulas are implemented literally in
\path{code/bridge_words.py}.

For the level-one source boundary set
\[
 \widehat{\Bbridge}_1:=C_{5,2},\qquad
 \widehat{\Bbridge}_n:=\Bbridge_n\quad(n\ge2).
\]
The central source factors used below are
\begin{align}
 M_B(n)
 &=7\prod_{j=0}^{n-1}\bigl(\phi_2(Q_{2j+1})8\bigr)
   \widehat E_{B,2}(n)\widehat D_{B,3}(n)9,
 \label{eq:MB}\\
 M_A(n)
 &=11\prod_{j=1}^{n}\bigl(5\phi_2(P_{2j})\bigr)
   5\EA(n)\DA(n)6.
 \label{eq:MA}
\end{align}
Equivalently, \(M_B(n)\) is the factor from the last \(7\) to the first
following \(9\) in \(\widehat{\Bbridge}_n\), and \(M_A(n)\) is the factor
from the last \(11\) to the first following \(6\) in \(\Abridge_n\).

\begin{theorem}[Complete bridge theorem]\label{thm:completebridges}
For all \(n\ge1\),
\begin{equation}\label{eq:Abridge}
1070714\xRightarrow[\Abridge_n]
{M_B(n),\;\widehat{\Bbridge}_n4;\;1}17455430.
\end{equation}
For all \(n\ge2\),
\begin{equation}\label{eq:Bbridge}
17373884\xRightarrow[\Bbridge_n]
{M_A(n-1),\;\Abridge_{n-1}1;\;1}1150227.
\end{equation}
\end{theorem}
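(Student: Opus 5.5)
We prove \eqref{eq:Abridge} and \eqref{eq:Bbridge} by cutting each bridge word, as written in \eqref{eq:Abridgeword} and \eqref{eq:Bbridgeword}, into a fixed sequence of factors and chaining the resulting transductions with Lemma~\ref{lem:composition}. Every factor will be one of three kinds:
\begin{enumerate}
\item a stationary chunk from Theorem~\ref{thm:chunks};
\item a linear tail from Theorem~\ref{thm:tails};
\item one of finitely many constant connectors: the leading letters \(4\) or \(1\), the isolated separators \(5\), \(6\), \(7\), \(11\), and the small-parameter pieces \(H_1\), \(\phi_2(P_2)\), \(\phi_3(Q_2)\), \(K_2\), \(\phi_2(Q_1)\), \(\phi_3(P_1)\).
\end{enumerate}
The connectors cover the cases \(m<3\) that Theorem~\ref{thm:chunks} does not handle, and each is replayed directly against the 122-rule table.

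For \(\Abridge_n\) the chain runs as follows.
\begin{enumerate}
\item The initial letter \(4\) takes \(1070714\) to the C4H state \(17435506\).
\item The product \(\prod_{j=1}^{n}(H_{2j-1}11)\) is read as an iterated use of \eqref{eq:C4H} with \(m=2j-1\), regrouped as \(11H_m\). The first copy, with \(H_1\), is a constant. Each \(m\ge3\) copy consumes source \(7K_{m-1}7\) and \(8\phi_2(Q_m)8\), with orientation \(0\).
\item A constant transition leads into the \(5\phi_2(P_{2j})\) product, which uses \eqref{eq:C4K}-type stationary loops from state \(610744\).
\item A transition into \(16778021\) is followed by \eqref{eq:EAt}, then \eqref{eq:DAt}, ending at \(17352080\).
\item The descending product \(\prod_{j=n}^{1}6\phi_3(Q_{2j})\) is read with \eqref{eq:C3Q}.
\item The final \(6\) carries the role swap \(\eps=1\) into \(17455430\).
\end{enumerate}
Next we check that the source words concatenate correctly. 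Under \(\odot\), the consumed A-stream pieces \(7K_{2j-2}7\), \(\ldots\), \(\widehat E_{B,2}(n)3\), \(\widehat D_{B,3}(n)9\) must glue to exactly \(M_B(n)\) as defined in \eqref{eq:MB}. Likewise the B-stream pieces \(8\phi_2(Q_m)8\), \(\ldots\), \(0^{n+2}\), \(0^n3\), \(3\,0^m3\) must glue to \(\widehat\Bbridge_n 4\), read in the order of \eqref{eq:Bbridgeword}. This is a pure word identity in the free monoid. It follows by induction on \(n\), matching the inclusive boundary letters (\(7\), \(8\), \(9\), \(3\)) shared between consecutive chunks.

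\(\Bbridge_n\) is handled symmetrically. The path starts at \(17373884\), uses \eqref{eq:C4K} for \(7K_{2j}\), the \(\phi_2(Q_{2j+1})8\) chunks, \eqref{eq:EBt}, \eqref{eq:DBt} to reach \(1101155\), and then \eqref{eq:C3P} for \(9\phi_3(P_{2j-1})\). The source streams must glue to \(M_A(n-1)\) and \(\Abridge_{n-1}1\). The index shift \(n\mapsto n-1\) matches the tails' source extensions \(\widehat E_{B,2}\), \(\widehat D_{B,3}\) and the level-one convention \(\widehat\Bbridge_1=C_{5,2}\). The base cases \(n=1\) for \(\Abridge\) and \(n=2\) for \(\Bbridge\) are checked directly as finite chains. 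The case \(n=1\) is also consistent with Lemma~\ref{lem:alignment}, where \(\Abridge_1\) ends at state \(17455430\).

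The main obstacle is the orientation bookkeeping combined with the source-order identity. An odd target type (\(1,4,10,12\)) inside a constant connector flips the roles of the A- and B-streams. Lemma~\ref{lem:composition} then swaps \(u\) and \(v\) for the rest of the chain, so we must verify that all interior factors compose with total orientation \(0\) and that the single swap occurs at the end. The stationary chunks all have \(\eps=0\), which isolates any swap to the finitely many connectors, and these are checked once. The hardest remaining step is to show that the two interleaved descending and ascending products (over \(j=n,\ldots,1\) versus \(j=1,\ldots,n\)) reproduce the source word in the right order. For this I would induct on \(n\), inserting the new outermost chunk pair \(H_{2n-1}11\) and \(6\phi_3(Q_{2n})\). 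On the source side, this insertion corresponds exactly to extending \(M_B(n-1)\) to \(M_B(n)\) and \(\widehat\Bbridge_{n-1}\) to \(\widehat\Bbridge_n\) by the matching outer factors, with the tails advancing via Theorem~\ref{thm:tails} at parameter \(n\).
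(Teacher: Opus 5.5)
Your decomposition (finite prefix, stationary \(H\)/\(K\) chunks, finite connectors, the two tails, stationary \(P\)/\(Q\) chunks, finite suffix, chained by Lemma~\ref{lem:composition}) is the paper's. The problem is the orientation bookkeeping, which you yourself single out as the crux: you put the swap at the wrong end, and the source identities you then propose are false. The odd swap types are \(1,4,10,12\); type \(6\) has even length \(30\) and does not swap. In \(\Abridge_n\) and \(\Bbridge_n\), every letter except the leading \(4\), resp.\ \(1\), lies in \(\{0,2,3,5,6,7,8,9,11\}\), and these are all even-length types. So the unique swap occurs in the finite prefix (\(4H_1\colon1070714\to17435506\), resp.\ \(1\,7K_2\colon17373884\to610744\)), never at the end. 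By the \(\eps_1=1\) case of Lemma~\ref{lem:composition}, the composite physical words are \(u_1\odot v_2\odot v_3\odot\cdots\) and \(v_1\odot u_2\odot u_3\odot\cdots\). Thus, after the prefix, every first-slot word continues the \emph{second} physical stream and every second-slot word continues the \emph{first}. With your placement (all \(\eps=0\) until a final swap), the lemma would instead return \((u_1\odot\cdots\odot u_k,\ v_1\odot\cdots\odot v_k;\ 1)\), which is not the ordered pair in \eqref{eq:Abridge}.

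As a result your two gluing identities are interchanged. The C4H words \(7K_{m-1}7\), the tail words \(\widehat E_{B,2}(n)3\) and \(\widehat D_{B,3}(n)9\), and the C3Q words \(9\phi_3(P_{m-1})9\) assemble the second word \(\widehat{\Bbridge}_n4\), which is where the tower \(7K_2\,7K_4\cdots7K_{2n-2}\,7\) sits. They cannot glue to \(M_B(n)\), which contains a single \(7\) and no \(K\)-factor, so the identity you state fails for every \(n\ge2\). The first stream \(M_B(n)\) is instead continued by \(8\phi_2(Q_m)8\), \(0^{n+2}\), \(0^n3\), and \(3\,0^m3\). Likewise, in \(\Bbridge_n\) the C4K words \(5\phi_2(P_m)5\) go into \(M_A(n-1)\), and \(11H_{m-1}11\) go into \(\Abridge_{n-1}1\).

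There are also two smaller slips. First, the letter \(4\) alone cannot end at \(17435506\), because that code records previous type \(\tau_{-1}=2\); the finite prefix is \(4H_1\). Second, C4K reads the \emph{target} \(7K_m\) and emits \(5\phi_2(P_m)5\) as a \emph{source}, so it is not what reads the target product \(\prod 5\phi_2(P_{2j})\) of \(\Abridge_n\). The paper's connector table enters that part by \(11\,5\phi_2(P_2)\colon17435506\to17337218\) and leaves it by \(5\colon17337218\to16778021\). Once the swap is moved to the prefix, your plan becomes the paper's direct composition at each fixed \(n\), and the outer induction on \(n\) is unnecessary.
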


\begin{proof}
Decompose each bridge into:
\begin{enumerate}
\item one finite prefix;
\item stationary H/K chunks from Theorem~\ref{thm:chunks};
\item finite H/P or K/Q connectors;
\item the four tails from Theorem~\ref{thm:tails};
\item stationary P/Q chunks from Theorem~\ref{thm:chunks};
\item one finite suffix.
\end{enumerate}
The seven connector paths are checked directly.  Every internal factor
preserves orientation; the initial odd factor swaps the physical roles
exactly once.  Repeated application of Lemma~\ref{lem:composition}
produces the displayed physical source words and terminal states.
\end{proof}

\begin{lemma}[Polynomial and token source zipper]\label{lem:tokenzipper}
Extend the four gap families to rank \(-1\) by
\[
G_{3A}(-1)=C_{10,2},\quad G_{3B}(-1)=C_{0,5},\quad
G_{4A}(-1)=C_{8,10},\quad G_{4B}(-1)=C_{5,7}.
\]
For every \(r\ge0\), the following four gap transductions hold:
\begin{align}
17503844&\xRightarrow[G_{3A}(r)\mid2]
 {6\phi_3(Q_{r+2})6,\;G_{3B}(r-1)1;\;1}1150227,
 \label{eq:G3Az}\tag{G3A-Z}\\
692369&\xRightarrow[G_{3B}(r)\mid6]
 {9\phi_3(P_{r+2})9,\;G_{3A}(r-1)4;\;1}17455430,
 \label{eq:G3Bz}\tag{G3B-Z}\\
1070714&\xRightarrow[G_{4A}(r)\mid9]
 {7K_{r+2}7,\;G_{4B}(r-1)12;\;1}17424331,
 \label{eq:G4Az}\tag{G4A-Z}\\
17373884&\xRightarrow[G_{4B}(r)\mid7]
 {11H_{r+2}11,\;G_{4A}(r-1)10;\;1}709032.
 \label{eq:G4Bz}\tag{G4B-Z}
\end{align}
Composing with the constant boundary edges gives the four
role-preserving parameterized tokens
\begin{align}
1150227&\xRightarrow[C_{2,4}G_{3A}(r)\mid2]
 {1G_{3B}(r-1)1,\;6\phi_3(Q_{r+2})6;\;0}1150227,
 \label{eq:A0zip}\tag{A0-Z}\\
1070714&\xRightarrow[G_{4A}(r)C_{9,8}\mid8]
 {7K_{r+2}7,\;G_{4B}(r-1)12\,1;\;0}1070714,
 \label{eq:A1zip}\tag{A1-Z}\\
692369&\xRightarrow[G_{3B}(r)C_{6,0}\mid0]
 {9\phi_3(P_{r+2})9,\;G_{3A}(r-1)4\,10;\;0}692369,
 \label{eq:B0zip}\tag{B0-Z}\\
709032&\xRightarrow[C_{7,11}G_{4B}(r)\mid7]
 {10G_{4A}(r-1)10,\;11H_{r+2}11;\;0}709032.
 \label{eq:B1zip}\tag{B1-Z}
\end{align}
The four direction-changing tokens are the finite judgements in
Table~\ref{tab:directiontokens}; each has final orientation zero.
\end{lemma}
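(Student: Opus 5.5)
We prove the four gap transductions \eqref{eq:G3Az}--\eqref{eq:G4Bz} by induction on \(r\), using the explicit factorizations \eqref{eq:G3A}--\eqref{eq:G4B}. Each one then composes with a constant edge word via Lemma~\ref{lem:composition} to give the corresponding token judgement. The idea is that each gap word is a prefix letter followed by a product over an order list (\(\rho_{r+2}\) or \(\pi_{r+2}\)) of ``separator + chunk'' factors. These chunks are exactly the target words of the stationary chunk transductions \eqref{eq:C3P}--\eqref{eq:C4K}, which Theorem~\ref{thm:chunks} supplies for every parameter \(m\ge3\). For example, \(G_{3A}(r)=10\prod_{m\in\rho_{r+2}}9\phi_3(P_m)\) is a leading \(10\) followed by copies of the C3P target word \(9\phi_3(P_m)\), all based at state \(1101155\). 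The \(A\)-source words consumed by these copies are \(6\phi_3(Q_{m-1})6\). Overlapping them with \(\odot\) in the order \(m\in\rho_{r+2}\) produces \(6\prod_{m}\phi_3(Q_{m-1})6\) with parameters \(\rho_{r+2}-1\). By Lemma~\ref{lem:ordershifts}, once the terminal entry is peeled off, this list equals \(\pi_{r+1}\), so the product reconstructs \(6\phi_3(Q_{r+2})6\) up to the boundary run.

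\textbf{Step 1 (assemble the A-stream).} Peel off the leading factor \(10\), which contains the odd orientation swap, and the small-parameter chunks \(m\le 2\) (or the first/last entries of the order list). These are finitely many directly checked primitives. The remaining middle product is a concatenation of the stationary chunks of Theorem~\ref{thm:chunks}. Each chunk is a loop at a fixed state with orientation \(0\), so Lemma~\ref{lem:composition} composes them freely. I will verify that the \(\odot\)-concatenation of their source words \(3\,0^m3\) is literally the \(B\)-stream claimed. For G3A this is \(G_{3B}(r-1)1\) after the swap: concatenating \(3\,0^m3\) over \(m\in\rho_{r+2}\) gives runs of zeros separated by \(3\)'s. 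After the role swap this should match \(\phi_3(Q_m)\) factors over \(\pi_{r+1}\), that is, \(G_{3B}(r-1)\) with its leading \(12\) and trailing \(6\) absorbed in the boundary letters. This bookkeeping, identifying the zero-run structure of \(\prod 3\,0^m3\) across the full order list with the recursively defined lower gap word, is where I expect the real work.

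\textbf{Step 2 (induction on \(r\)).} The base cases are \(r=0\) together with the rank \(-1\) extensions \(C_{10,2},C_{0,5},C_{8,10},C_{5,7}\), all checked directly in the finite table. For the step, \(\rho_{r+3}\) and \(\pi_{r+3}\) differ from \(\rho_{r+2}\) and \(\pi_{r+2}\) by inserting one new extreme parameter \(r+3\), at the start or end depending on parity. In the target word this inserts one new chunk, and in each source stream it lengthens exactly one boundary factor: \(\phi_3(Q_{r+2})\mapsto\phi_3(Q_{r+3})\) on one stream, and \(G_{\bullet}(r-1)\mapsto G_\bullet(r)\) on the other. I will check that the state at the insertion point is the loop state of the relevant chunk, so the inserted factor composes with orientation \(0\). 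The G4 cases run the same way, using C4H/C4K with \(H_m,K_m\) and the inner P/Q shapes already noted in the proof of Theorem~\ref{thm:chunks}.

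\textbf{Step 3 (tokens).} Compose \(C_{2,4}\) before \eqref{eq:G3Az}, \(C_{9,8}\) after \eqref{eq:G4Az}, \(C_{6,0}\) after \eqref{eq:G3Bz}, and \(C_{7,11}\) before \eqref{eq:G4Bz}. Each constant edge is a finite odd macro from the stored table with a single role swap. The two swaps cancel, which gives final orientation \(0\) and, by the swap clause of Lemma~\ref{lem:composition}, the exchanged source pairing displayed in \eqref{eq:A0zip}--\eqref{eq:B1zip}. The lookahead types \(2,8,0,7\) come from the stored finite rows. The four direction-changing tokens are finite judgements read off the table. The main obstacle remains Step 1: proving for all \(r\) that the \(\odot\)-glued source streams coincide exactly with the lower-rank gap words, including the correct boundary letters. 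I would attack it through Lemma~\ref{lem:ordershifts}, splitting by the parity of \(r\).
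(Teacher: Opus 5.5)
You have the paper's architecture: peel off the role-swapping entry letter $10$, split $G_{3A}(r)$ into chunks indexed by $\rho_{r+2}$, invoke Theorem~\ref{thm:chunks}, glue with Lemma~\ref{lem:composition}, and finish with one constant edge. But Step~1 routes the two source streams the wrong way round, and the identity you single out as ``the real work'' is false.

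The $\odot$-product of the chunk B-sources $3\,0^m3$ over $m\in\rho_{r+2}$ is the $\{0,3\}$-word $3\,0^{m_1}3\,0^{m_2}3\cdots3\,0^{m_k}3$. Since $Q_{r+2}=\rev(P_{r+2})=\prod_{s\in\rho_{r+2}}1\,0^{s}$, this is just $\phi_3(Q_{r+2})$ with boundary letters. It therefore cannot equal $G_{3B}(r-1)1$, which for $r\ge1$ contains the letters $12$ and $6$ and has length $\binom{r+4}{3}+1$.

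Conversely, the chunk A-sources $6\phi_3(Q_{m-1})6$ do not rebuild $6\phi_3(Q_{r+2})6$; you also dropped the interior separators $6$. By $\rho_{r+2}=(\pi_{r+1}+1)1$, their overlap is $6\,\phi_3(Q_{m'_1})\,6\,\phi_3(Q_{m'_2})\,6\cdots$ with $m'$ running through $\pi_{r+1}$, closed off by the finite $m=1$ chunk. That is the body of $G_{3B}(r-1)=12\prod_{m\in\pi_{r+1}}\phi_3(Q_m)6$.

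This is exactly what the swap clause of Lemma~\ref{lem:composition} dictates. After an entry factor with $\eps_1=1$, the composite streams are $u_1\odot v_2$ and $v_1\odot u_2$. So the chunks' \emph{second} sources feed $6\phi_3(Q_{r+2})6$, and their \emph{first} sources feed $G_{3B}(r-1)1$. With the routing corrected, both identifications are immediate: one from the definition of $Q_{r+2}$, the other from the order identity. The same crossed reading, using $\pi_m=1(\rho_{m-1}+1)$ and \eqref{eq:C4H}--\eqref{eq:C4K}, handles $G_{3B}$, $G_{4A}$, $G_{4B}$.

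Step~2 is then superfluous. Theorem~\ref{thm:chunks} already holds for every $m\ge3$, so the decomposition proves each $r$ directly, with only finitely many boundary primitives replayed; this is how the paper argues. As written, Step~2 also misdescribes the order words. $\pi_{r+3}$ and $\rho_{r+3}$ arise from $\pi_{r+2}$ and $\rho_{r+2}$ by inserting the new maximum $r+3$ at the junction of the odd and even blocks, in the interior, not at the start or end. Step~3 agrees with the paper.
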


\begin{table}[htbp]
\centering
\small
\begin{tabularx}{\textwidth}{c c c Y c}
\toprule
token & state map & target & physical source pair \((u,v)\) & lookahead\\
\midrule
\(AU\) & \(1150227\to1070714\) & \(C_{2,3}C_{3,8}\)
 & \(((1,7),(6,1))\) & \(8\)\\
\(AD\) & \(1070714\to1150227\) & \(C_{8,10}C_{10,2}\)
 & \(((7,2,0,8,12,1),(1,12,3,0,6))\) & \(2\)\\
\(BU\) & \(692369\to709032\) & \(C_{0,5}C_{5,7}\)
 & \(((9,0,3,4,10),(10,4,5,0,2,11))\) & \(7\)\\
\(BD\) & \(709032\to692369\) & \(C_{7,12}C_{12,0}\)
 & \(((10,9),(11,10))\) & \(0\)\\
\bottomrule
\end{tabularx}
\caption{The four finite direction-token source zippers.}
\label{tab:directiontokens}
\end{table}

\begin{proof}
The order words satisfy, with componentwise addition of one,
\[
 \rho_m=(\pi_{m-1}+1)1,\qquad
 \pi_m=1(\rho_{m-1}+1)\qquad(m\ge1).
\]
Insert the definitions \eqref{eq:G3A}--\eqref{eq:G4B}.  For example,
the target chunks of \(G_{3A}(r)\) are indexed by
\(m\in\rho_{r+2}\).  After the finite entry edge swaps the roles, the
second source words in \eqref{eq:C3P} overlap to
\(6\phi_3(Q_{r+2})6\), while the first source words overlap, in the
displayed order above, to \(G_{3B}(r-1)1\).  The two order identities
give the same calculation for \(G_{3B}\), and, with
\eqref{eq:C4H}--\eqref{eq:C4K}, for \(G_{4A}\) and \(G_{4B}\).
The ranks \(1,2\) and the four rank-\((-1)\) boundary words are finite
constant-edge cases.  Thus exact composition proves
\eqref{eq:G3Az}--\eqref{eq:G4Bz} for every \(r\), not only for a sampled
range.  One further constant edge gives
\eqref{eq:A0zip}--\eqref{eq:B1zip}; direct replay of the eight constant
edges in Table~\ref{tab:directiontokens} proves its four rows.
\end{proof}

\section{Global epoch gluing and state induction}\label{sec:globalgluing}

For a B-bridge, let \(\pref_7\) denote the prefix ending with its last
\(7\), and let \(\suf_9\) denote the suffix beginning with the first
subsequent \(9\).  For an A-bridge, define \(\pref_{11}\) and \(\suf_6\)
analogously, using its last \(11\) and the first subsequent \(6\).

Let \(\mathcal T_A(n)\) and \(\mathcal T_B(n)\) denote the decorated token
lists specified in Section~\ref{sec:rankgrammar}: they include the token
kind, its assigned rank when present, and the two finite B-boundary
half-loops.  If \(\tau\) is a complete token, write \(u(\tau),v(\tau)\)
for the two source words in Lemma~\ref{lem:tokenzipper} and
Table~\ref{tab:directiontokens}.  Extend these projections to a compatible
token list by inclusive overlap,
\[
 u^*(\tau_1\cdots\tau_q)
   =u(\tau_1)\odot\cdots\odot u(\tau_q),\qquad
 v^*(\tau_1\cdots\tau_q)
   =v(\tau_1)\odot\cdots\odot v(\tau_q).
\]

\begin{lemma}[Free-monoid port factorization]\label{lem:portfactor}
For every \(n\ge1\),
\begin{align}
u^*(\mathcal T_A(n))
 &=\BEpoch(n)\pref_7(\Bbridge_{n+1}),
 \label{eq:AportA}\\
v^*(\mathcal T_A(n))
 &=\suf_6(\Abridge_n)\BEpoch(n)1.
 \label{eq:AportB}
\end{align}
For every \(n\ge2\),
\begin{align}
u^*(\mathcal T_B(n))
 &=\AEpoch(n-1)\pref_{11}(\Abridge_n),
 \label{eq:BportA}\\
v^*(\mathcal T_B(n))
 &=\suf_9(\Bbridge_n)\AEpoch(n-1)4.
 \label{eq:BportB}
\end{align}
These are equalities of ordered words, not only of lengths or letter
counts.
\end{lemma}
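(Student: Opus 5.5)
We prove the four identities by induction on the token list, reading the source projections token by token and matching them against the grammar of Section~\ref{sec:rankgrammar}. The basic observation is structural. By Lemma~\ref{lem:tokenzipper}, every complete A-token has a physical source pair $(u,v)$ whose first component is a B-side target factor and whose second is an A-side target factor:
\begin{itemize}
\item an A$0$-token of rank $r$ contributes $1G_{3B}(r-1)1$ and $6\phi_3(Q_{r+2})6$;
\item an A$1$-token contributes $7K_{r+2}7$ and $G_{4B}(r-1)12\,1$;
\item the direction tokens $AU$, $AD$ contribute the constant words of Table~\ref{tab:directiontokens}.
\end{itemize}
So $u^*$ and $v^*$ are concatenations, under $\odot$, of words already appearing in the definitions of $\BEpoch(n)$, $\Bbridge_{n+1}$ and $\Abridge_n$. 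The task is purely combinatorial: show that the order in which tokens of $\mathcal T_A(n)$ emit these pieces agrees with the order in which the right-hand sides list them.

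\textbf{Step 1: inner factors.} Rewrite the right-hand sides with the definitions. Expand $\BEpoch(n)$ through $\theta(B_{n+1})$, and $\pref_7(\Bbridge_{n+1})$ through \eqref{eq:Bbridgeword} and \eqref{eq:HK}. Then use the order identities $\rho_m=(\pi_{m-1}+1)1$ and $\pi_m=1(\rho_{m-1}+1)$, together with Lemma~\ref{lem:ordershifts}, to split every $K_{r+2}$ and $\phi_3(Q_{r+2})$ into an outer boundary piece and lower-rank inner pieces. The aim is that each rank-$r$ token of $\mathcal T_A(n)$, whose rank comes from $A_n$, produces exactly a rank-$(r-1)$ gap factor $G_{3B}(r-1)$ or $G_{4B}(r-1)$. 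This is precisely what a token of $\mathcal T_B(n)$ with rank $r-1$ emits as its target. The index shift by one is the algebraic content of the rank factorizations \eqref{eq:rankfactorA}--\eqref{eq:rankfactorB}.

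\textbf{Step 2: ordering.} Check that the order matches. In $A_{n+1}=E_{n+1}U(B_{n+1})$, the block $U(B_{n+1})$ reproduces the token path $\theta(B_{n+1})$ up to the boundary half-loops, and the rank assignment sends $0$-tokens left-to-right and $1$-tokens right-to-left. Since $u^*$ reads the B-stream in the forward direction, the right-to-left assignment of $1$-tokens must be shown to coincide with the reversal built into $Q_m=\rev(P_m)$ and $\rho_m=\rev(\pi_m)$.

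\textbf{Step 3: boundary.} The boundary block $E_{n+1}$, consisting of the ranks $2n,\dots,0$, produces the tail. Its $0$-tokens carry ranks decreasing by $2$, so they emit exactly the products $\prod_j 7K_{2j}$ and $\prod_j\phi_2(Q_{2j+1})8$ of $\pref_7(\Bbridge_{n+1})$, ending at its last $7$. Correspondingly, in $v^*$ the leading boundary tokens generate $\suf_6(\Abridge_n)$, namely $\prod_{j=n}^{1}6\phi_3(Q_{2j})6$, and the final A$1$-token supplies the trailing $1$. Inclusive overlaps $\odot$ delete the shared letters $1$, $7$, $6$, $12$ at junctions, which we verify by inspecting the first and last letters of each emitted word. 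The B-identities \eqref{eq:BportA}--\eqref{eq:BportB} follow by the same argument with the roles of $\pi$ and $\rho$ exchanged and the constant half-loops $C_{6,0}$, $C_{7,11}$ handled as boundary tokens. We use induction on $n$, taking $n=1$ (resp.\ $n=2$) as the directly checked base.

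\textbf{Main obstacle.} The hard part is Step 2. Interleaving the $0$- and $1$-tokens, with their opposite rank-reading directions, has to reproduce the linear left-to-right order of the epoch word on the other stream. The approach is to prove a stronger, parameterized statement: for each prefix of $\theta(A_{n+1})$, $u^*$ equals an explicit prefix of $\BEpoch(n)$. This is shown by simultaneous induction along the factorizations $A_n=E_nU(B_n)$ and $B_{n+1}=O_nV(A_n)$, so that reversal of the $1$-token ranks is absorbed by the operator $\rev$ at each level.
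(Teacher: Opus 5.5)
\paragraph{Step 3 is incorrect, and the error is structural.}
The ranks of $\mathcal T_A(n)$ come from $A_n=E_nU(B_n)$, so the boundary ranks are $E_n=(2n-2,\dots,0)$. The entries of $E_{n+1}$ only shape the token path.

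The first $n$ tokens are $0$-tokens carrying these ranks. On $u$ they emit
\[
1G_{3B}(2n-3)1,\ \dots,\ 1G_{3B}(1)1,\ 1G_{3B}(-1)1 .
\]
This is the \emph{opening} of $\BEpoch(n)$: the $B0$-tokens of ranks $2n-3,\dots,1$ from the $O_n$-part of $\theta(B_{n+1})$, followed by $C_{0,5}=G_{3B}(-1)$ of the first $BU$. It is not bridge material, and a $0$-token never emits a $K$.

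The $K$-tower comes from the \emph{last} $n$ tokens. These form the final run of $1$-tokens, whose ranks are $E_n$ read backward, $0,2,\dots,2n-2$. Preceded by the $u$-word $(1,7)$ of $AU$, their words $7K_{2j}7$ give
\[
\pref_7(\Bbridge_{n+1})=1\prod_{j=1}^{n}(7K_{2j})7 .
\]
This prefix contains no factor $\phi_2(Q_{2j+1})8$; those lie after the last $7$.

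Already at $n=1$ you can see this: $\mathcal T_A(1)=A0(0),AU,AD,AU,A1(0)$, and the only $K$ in $u^*$ is the final $7K_27$.

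So the two projections cut the token list at opposite ends. On $v$, the fragment $\suf_6(\Abridge_n)$ does come from the first $n$ tokens, as you say. On $u$, the fragment $\pref_7(\Bbridge_{n+1})$ comes from the last $n$. A single split of the token path $A_{n+1}=E_{n+1}U(B_{n+1})$ into a ``bridge'' part and an ``epoch'' part therefore cannot serve both projections.

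Also, $U(B_{n+1})$ does not reproduce $\theta(B_{n+1})$. It refines it: a positive vertex $b'$ becomes $b'$ positive vertices followed by a zero.

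The paper instead splits the \emph{rank list} $A_n=E_nU(B_n)$. The forward $0$-iterator meets $E_n$ first, and the backward $1$-iterator meets it last.

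\paragraph{Step 1 covers only the ``parent'' half.}
Your rank shift is correct and useful. The positive entries of $A_n$, decreased by one, are exactly $B_n$ in order. Hence $A0$-tokens supply the $G_{3B}$-factors on $u$, and $A1$-tokens supply the $G_{4B}$-factors on $v$.

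But every token also emits a piece on the \emph{other} projection, and these pieces must be regrouped:
\begin{itemize}
\item On $v$: the word $1\,12\,\phi_3(Q_1)6$ of $AD$, then a run of $A0$-tokens with ranks $\pi_b0$, then the word $6\,1$ of $AU$ together spell $1\,G_{3B}(b)\,1$. This holds because $\pi_{b+2}=1(\pi_b+2)2$.
\item On $u$: the word of $AU$, then a run of $A1$-tokens with ranks $0\rho_b$, then the word $7K_1\,12\,1$ of $AD$ together spell $G_{4B}(b)\,12\,1$. This holds because $\rho_{b+2}=2(\rho_b+2)1$.
\end{itemize}
Here $\pi_b0=U(b)$ is a block of $A_n$ read forward, and $0\rho_b$ is the same block read backward. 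This is precisely how the opposite reading directions are reconciled, which is the content of your ``main obstacle.''

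You defer that obstacle to an unstated strengthening, and the one you sketch fails as stated. ``$u^*$ of every prefix is a prefix of $\BEpoch(n)$'' is false for the final run of $1$-tokens.

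Splitting each $K_{r+2}$ or $\phi_3(Q_{r+2})$ into lower-rank pieces is the zipper-lemma computation. It does not address this regrouping.

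Finally, calling the second source word an A-side factor conflicts with the statement itself, since $v^*$ also spells $\BEpoch(n)$.
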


\begin{proof}
Write
\[
\mathsf U_A(n)=u^*(\mathcal T_A(n)),\quad
\mathsf V_A(n)=v^*(\mathcal T_A(n)),\quad
\mathsf U_B(n)=u^*(\mathcal T_B(n)),\quad
\mathsf V_B(n)=v^*(\mathcal T_B(n)).
\]
The simultaneous induction hypothesis at level \(n\) is the conjunction
\(\mathsf B_n\wedge\mathsf A_n\), where
\begin{align*}
\mathsf A_n:\quad&
 \mathsf U_A(n)=\BEpoch(n)\pref_7(\Bbridge_{n+1}),&
 \mathsf V_A(n)=\suf_6(\Abridge_n)\BEpoch(n)1,\\
\mathsf B_n:\quad&
 \mathsf U_B(n)=\AEpoch(n-1)\pref_{11}(\Abridge_n),&
 \mathsf V_B(n)=\suf_9(\Bbridge_n)\AEpoch(n-1)4.
\end{align*}
For \(n=1\), \(\mathsf B_1\) is interpreted with
\(\AEpoch(0)=(4,10)\) and
\(\widehat{\Bbridge}_1=C_{5,2}\); it is the finite boundary
transduction \eqref{eq:B1EpochSource}.

The induction order is noncircular:
\[
 \mathsf B_n\Longrightarrow\mathsf A_n
 \Longrightarrow\mathsf B_{n+1}.
\]
For the first implication, split the forward and reverse rank assignments
of \(\mathcal T_A(n)\) according to
\(A_n=E_nU(B_n)\).  The \(U(B_n)\)-part is transformed, token by token,
into the target expansion \(\BEpoch(n)\).  Indeed, a rank \(r\) is replaced
by \(\pi_r0\); the four parameterized rows
\eqref{eq:A0zip}--\eqref{eq:B1zip}, the two direction rows, and
\[
 \rho_m=(\pi_{m-1}+1)1,\qquad
 \pi_m=1(\rho_{m-1}+1)
\]
identify the resulting parent and FIFO factors in their displayed order.
The boundary word
\(E_n=(2n-2,2n-4,\ldots,0)\) is read forward by the A-zero projection and
backward by the A-one projection.  It therefore contributes, respectively,
\[
 \suf_6(\Abridge_n)
 \quad\hbox{and}\quad
 \pref_7(\Bbridge_{n+1});
\]
the terminal rank-\((-1)\) edge supplies the final \(1\) in
\eqref{eq:AportB}.  This proves \(\mathsf A_n\).

For the second implication, use
\(B_{n+1}=O_nV(A_n)\) for the rank assignment and
\(B_{n+2}=O_{n+1}V(A_{n+1})\) for the next token path.  The
\(V(A_n)\)-part is transformed into \(\AEpoch(n)\).  The odd boundary
word \(O_n=(2n-1,2n-3,\ldots,1)\), together with the two B-half-loops,
contributes
\[
 \suf_9(\Bbridge_{n+1})
 \quad\hbox{and}\quad
 \pref_{11}(\Abridge_{n+1}),
\]
while the final rank-\((-1)\) edge contributes the terminal \(4\).
Hence \(\mathsf B_{n+1}\) follows.

The only bookkeeping point special to the B-family is the discarded first
parameter in Definition~\ref{def:Binsert}.  The reversed positive list of
\(B_n\) begins with the terminal value \(1\).  Replacing \(1\) by \(0\)
turns that first parameter into the already present initial boundary
half-loop \(C_{6,0}\), so it must not be assigned to an interior zero
token.  Removing precisely this first parameter gives the list in
Definition~\ref{def:Binsert}; every other old \(1\) is retained through
its explicit \(Q_1\)-factor.  Thus no rank or source factor is lost.

The bases \(A_1=(0)\), \(B_2=(1,0)\), the four rank-\((-1)\) edges, and
the boundary case \(\mathsf B_1\) are finite exact paths.  The four
projection recurrences and the individual images of \(E_n,O_n\) are
written out in Appendix~\ref{app:portrecursions}.  This completes the
free-monoid induction without a finite rank cutoff.
\end{proof}

The exact source equations for the complete epochs are
\begin{equation}\label{eq:AEpochSource}
1150227\xRightarrow[\AEpoch(n)]
{\BEpoch(n)\pref_7(\Bbridge_{n+1}),\;
 \suf_6(\Abridge_n)\BEpoch(n)1;\;0}1070714,
\end{equation}
and, for \(n\ge2\),
\begin{equation}\label{eq:BEpochSource}
17455430\xRightarrow[\BEpoch(n)]
{\AEpoch(n-1)\pref_{11}(\Abridge_n),\;
 \suf_9(\Bbridge_n)\AEpoch(n-1)4;\;0}17373884.
\end{equation}
For the missing boundary level, put
\[
\AEpoch(0):=(4,10).
\]
Then the level \(n=1\) B-epoch is the finite transduction
\begin{equation}\label{eq:B1EpochSource}
17455430
\xRightarrow[(1,12,3,0,6,1,7,2,0,8,12)]
{(4,10,4,5,0,2,11),\;(9,0,3,4,10,4);\;0}
17373884.
\end{equation}
Equivalently, this is the \(n=1\) instance of the source pattern in
\eqref{eq:BEpochSource}, with \(\AEpoch(0)=(4,10)\) and
\(\widehat{\Bbridge}_1=C_{5,2}\).  It is replayed as the fifth seed
identity, using the unique rule sequence
\[
(26,41,44,49,53,54,93,106,108,111,118).
\]

\begin{theorem}[Epoch source theorem]\label{thm:epochsources}
Equation \eqref{eq:AEpochSource} holds for every \(n\ge1\), and
\eqref{eq:BEpochSource} holds for every \(n\ge2\).  The boundary level of
the B-family is \eqref{eq:B1EpochSource}.
\end{theorem}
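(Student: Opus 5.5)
The plan is to read $\AEpoch(n)$ and $\BEpoch(n)$ as concatenations of the target factors of their decorated token lists $\mathcal T_A(n)$ and $\mathcal T_B(n)$. Each token then gets a certified judgement from the zipper lemma, and the judgements are chained with Lemma~\ref{lem:composition}. After that, the source words produced by the chaining are identified with the closed forms using Lemma~\ref{lem:portfactor}.

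\textbf{Step 1: per-token judgements.} Fix $n\ge1$ for the A-family. Each $0$-token with assigned rank $r$ is certified by \eqref{eq:A0zip}, a role-preserving loop at state $1150227$. Each $1$-token is certified by \eqref{eq:A1zip}, a loop at $1070714$. Each $U$- or $D$-token is certified by the rows $AU$, $AD$ of Table~\ref{tab:directiontokens}, which move between $1150227$ and $1070714$.

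\textbf{Step 2: state matching and lookaheads.} The token path $\theta(A_{n+1})$ is a walk on the two vertices. The zero vertex carries state $1150227$ and the nonzero vertex carries $1070714$. Consecutive judgements therefore match in their synchronized states. The walk starts at the zero-rank side after the A-bridge handoff and ends on the nonzero side, which gives initial state $1150227$ and terminal state $1070714$. This endpoint reading must be checked against the convention ``starting at the nonzero vertex''; I expect the leading boundary token to account for the discrepancy.

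For the lookahead hypothesis of Lemma~\ref{lem:composition}, I observe that every token's recorded lookahead ($2$, $8$, $0$, $7$) is exactly the first target type of every possible successor token. For example, an $A$-token from vertex ``$2$'' always begins with $C_{2,\ast}$. The terminal lookahead of the whole epoch is fixed by the following bridge.

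\textbf{Step 3: orientation and source words.} All token judgements have final orientation $0$, so the $\eps_1=0$ branch of Lemma~\ref{lem:composition} applies at every gluing. By induction on the number of tokens, the composed source words are exactly $u^*(\mathcal T_A(n))$ and $v^*(\mathcal T_A(n))$, with final orientation $0$. Compatibility of the inclusive overlaps is needed at each junction: the last letter of $u(\tau_i)$ must equal the first letter of $u(\tau_{i+1})$. This follows from the displayed boundary letters in the zipper rows; for instance, $6\ldots6$ meets $6\ldots$, and $1\ldots1$ meets $1\ldots$. Applying \eqref{eq:AportA} and \eqref{eq:AportB} then gives \eqref{eq:AEpochSource}.

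\textbf{B-family and boundary level.} The B-family for $n\ge2$ is handled identically. It uses \eqref{eq:B0zip}, \eqref{eq:B1zip} and the rows $BU$, $BD$, together with the two boundary half-loops $C_{6,0}$ and $C_{7,11}$. These half-loops are finite constant edges whose judgements are replayed directly; they supply the entry from $17455430$ and the exit to $17373884$. Lemma~\ref{lem:portfactor}, in the forms \eqref{eq:BportA} and \eqref{eq:BportB}, then gives \eqref{eq:BEpochSource}. The level $n=1$ is the finite seed \eqref{eq:B1EpochSource}, replayed along the stated rule sequence.

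\textbf{Main obstacle.} The hard step is the junction bookkeeping. I must confirm that the half-loop boundary edges and the rank-$(-1)$ extensions produce matching states, orientations and overlap letters at both ends of the epoch, and that the exclusion of the discarded first B-parameter does not break state continuity. I would handle this by tabulating the four possible ordered token pairs for each family and checking the three conditions (state, lookahead, boundary letter) once for each pair. This is a finite check that is uniform in $n$.
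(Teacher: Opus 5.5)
Your route is the paper's own: token judgements from Lemma~\ref{lem:tokenzipper} and Table~\ref{tab:directiontokens}, chained by Lemma~\ref{lem:composition}, identified with the closed forms by Lemma~\ref{lem:portfactor}, and \eqref{eq:B1EpochSource} replayed at \(n=1\). However, two steps are wrong as written.

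First, in Step~2 you have the vertex--state correspondence reversed. The \(0\)-token is the transition \(+\to+\), and \eqref{eq:A0zip} is a loop at \(1150227\). The \(1\)-token is \(0\to0\), and \eqref{eq:A1zip} is a loop at \(1070714\). So the nonzero vertex carries \(1150227\) and the zero vertex carries \(1070714\); for B they are \(692369\) and \(709032\). You reach the correct endpoint states only because of a second reversal. With the correct labels there is no discrepancy. By convention \(\theta(A_{n+1})\) starts at the nonzero vertex, i.e.\ at \(1150227\), which is the exit state of \(\Bbridge_{n+1}\); the A-epoch follows the B-bridge, not an A-bridge. Since \(A_{n+1}=E_{n+1}U(B_{n+1})\) and every \(U(r)=\pi_r0\) ends in \(0\), the walk ends at the zero vertex, i.e.\ at \(1070714\). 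Your proposed repair could not have worked in any case: \(\mathcal T_A(n)\) contains no boundary token, because half-loops occur only in \(\mathcal T_B(n)\).

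Second, the claim that the B-family is handled identically, using the \(\eps_1=0\) branch at every gluing, is false. The two boundary half-loops swap roles. You can read this off \eqref{eq:B1EpochSource}, whose token list is half-loop, \(BU\), half-loop. Its first physical word \((4,10,4,5,0,2,11)\) ends with the \emph{second} \(BU\) source word \((10,4,5,0,2,11)\). Its second physical word \((9,0,3,4,10,4)\) begins with the \emph{first} one, \((9,0,3,4,10)\). If all gluings preserved roles, the first physical word would have to contain \((9,0,3,4,10)\) as a factor. Since \(BU\) and the total both have orientation \(0\), both half-loops must therefore have orientation \(1\). This is consistent with the B-epoch word starting with type \(1\) and ending with type \(12\), both swap types.

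So the entry gluing uses the \(\eps_1=1\) case of Lemma~\ref{lem:composition}. The interior tokens compose role-preservingly among themselves, and the exit half-loop swaps back. The upshot is as follows:
\begin{itemize}
\item the tabulated \(v\)-words of the interior B-tokens, together with the half-loop pieces, build \(\AEpoch(n-1)\pref_{11}(\Abridge_n)\);
\item the tabulated \(u\)-words build \(\suf_9(\Bbridge_n)\AEpoch(n-1)4\).
\end{itemize}
This is the sense in which the projections in \eqref{eq:BportA}--\eqref{eq:BportB} are ``fixed by the boundary connector'' (Appendix~\ref{app:portrecursions}). The paper's proof is equally brief at this point. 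Even so, your claim of uniform orientation zero must be replaced by this swap bookkeeping, and your closing junction check should record it explicitly.
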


\begin{proof}
Read the decorated lists \(\mathcal T_A(n)\) and \(\mathcal T_B(n)\) from
left to right.  Lemma~\ref{lem:tokenzipper} supplies an exact judgement
for every token, including rank zero, and every one has final orientation
zero.  Repeated application of Lemma~\ref{lem:composition} therefore
keeps the physical roles fixed and produces the source words
\(u^*,v^*\).  Lemma~\ref{lem:portfactor} identifies those words exactly
with the two right-hand sides of \eqref{eq:AEpochSource} and
\eqref{eq:BEpochSource}.  The initial and terminal token states are the
displayed epoch states, so the two judgements follow.  The exceptional
list at \(n=1\) in the B-family is the finite replay
\eqref{eq:B1EpochSource}.
\end{proof}

\subsection{Absolute occurrence addresses}

Equality of unmarked type words is not enough to identify a repeated
source occurrence.  We therefore record the absolute block addresses.
Let
\begin{align}
\ell_A(n)&=\frac{96\,4^n-n^4-12n^3-53n^2-108n-90}{3},
\label{eq:ellA}\\
\ell_B(n)&=\frac{96\,4^n-2n^4-20n^3-73n^2-127n-96}{6},
\label{eq:ellB}\\
S_A(n)&=\frac{192\,4^n+4n^3+27n^2+59n+24}{6},
\label{eq:SA}\\
S_B(n)&=\frac{96\,4^n+4n^3+21n^2+35n}{6},
\label{eq:SB}
\end{align}
and put
\[
R_A(n)=S_A(n)+\ell_A(n),\qquad
R_B(n)=S_B(n)+\ell_B(n).
\]
Here \(n\ge0\) for the A-formulas, with the virtual
\(\AEpoch(0)=(4,10)\), and \(n\ge1\) for the B-formulas.  Define also
the bridge lengths
\begin{align}
L_A(m)&=\frac{(m+2)(m+3)(m^2+3m+5)}3,\label{eq:LA}\\
L_B(m)&=\frac{(m+2)(2m^3+8m^2+15m+15)}6.\label{eq:LB}
\end{align}

\begin{lemma}[Factor lengths and boundaries]\label{lem:factorlengths}
The closed forms above are the actual target-word lengths and boundaries:
\[
|\AEpoch(n)|=\ell_A(n)\quad(n\ge0),\qquad
|\BEpoch(n)|=\ell_B(n)\quad(n\ge1),
\]
\[
|\Abridge_m|=L_A(m)\quad(m\ge1),\qquad
|\Bbridge_m|=L_B(m)\quad(m\ge2),
\]
and \(|\widehat{\Bbridge}_1|=L_B(1)=20\).  Starting from
\(S_A(0)=36\), \(R_A(0)=38\), the boundaries obey
\begin{align}
S_B(n)&=R_A(n-1)+L_A(n),&
R_B(n)&=S_B(n)+\ell_B(n),\label{eq:boundaryrec1}\\
S_A(n)&=R_B(n)+L_B(n+1),&
R_A(n)&=S_A(n)+\ell_A(n).
\label{eq:boundaryrec2}
\end{align}
Consequently the four target intervals of level \(n\ge1\) are
\begin{align}
\Abridge_n&=[R_A(n-1),S_B(n)),&
\BEpoch(n)&=[S_B(n),R_B(n)),\label{eq:factorintervals1}\\
\Bbridge_{n+1}&=[R_B(n),S_A(n)),&
\AEpoch(n)&=[S_A(n),R_A(n)).\label{eq:factorintervals2}
\end{align}
\end{lemma}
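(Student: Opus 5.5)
The plan is to compute every length by additivity over the explicit word formulas, and then obtain the boundaries by summing those lengths in the global order of the factors. The basic ingredients are the lengths of the order words and run words. Since \(|\pi_m|=m\), we get \(|P_m|=|Q_m|=\sum_{r=1}^m(r+1)=m(m+3)/2\). The maps \(\phi_a\) preserve length, so this also gives \(|H_m|\) and \(|K_m|\) as sums of quadratics over \(r\le m\), i.e. cubics in \(m\). The tail words are explicit run-length products:
\[
|\EA(n)|=(n+1)+\sum_{i=0}^{n+1}(2i+1),\qquad |\DA(n)|=(n+1)+\sum_{i=0}^{n}(2i+1),
\]
and \(|\EB(n)|,|\DB(n)|\) are computed the same way. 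Substituting these into \eqref{eq:Abridgeword} and \eqref{eq:Bbridgeword} expresses \(|\Abridge_m|\) and \(|\Bbridge_m|\) as finite sums of polynomials in \(j\). Evaluating these sums by Faulhaber's formulas should give the quartics \(L_A(m)\) and \(L_B(m)\). As a check, \(L_B(1)=3\cdot 40/6=20\) agrees with the stated length of \(C_{5,2}\), read from the certificate.

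The epoch lengths are the main work. By the grammar of Section~\ref{sec:rankgrammar}, \(\AEpoch(n)\) is the concatenation of token expansions along \(\theta(A_{n+1})\). Its length is therefore the sum of three kinds of contributions:
\begin{itemize}
\item the constant lengths of the \(U\)- and \(D\)-tokens;
\item for each \(0\)-token, \(|C_{2,4}|+\binom{r+5}{3}\), where \(r\) is its assigned rank;
\item for each \(1\)-token, \(\binom{r+6}{4}+|C_{9,8}|\), where \(r\) is its assigned rank.
\end{itemize}
By the balance part of Theorem~\ref{thm:rankfactor}, the ranks assigned to the \(0\)-tokens and to the \(1\)-tokens each exhaust the entries of \(A_n\). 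Hence \(\ell_A(n)\) depends only on three statistics of the rank words: the numbers of zeros and of positive entries, and the power sums \(\sum\binom{w+5}{3}\) and \(\sum\binom{w+6}{4}\) over the entries \(w\). The analogous statement holds for \(\BEpoch(n)\), using \(B_n\) together with the two boundary half-loops.

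I would derive recursions for these statistics from the factor recursions \(A_n=E_nU(B_n)\) and \(B_{n+1}=O_nV(A_n)\). Replacing an entry \(r\) by \(\pi_r0\) sends a binomial statistic of \(r\) to a sum over \(\pi_r\), which is again a polynomial in \(r\) of one higher degree. The statistics therefore satisfy a finite triangular linear recursion in the vector of power-type sums \(\sum\binom{w+c}{k}\). I expect the multiplier \(4\) to arise as the leading eigenvalue, from \(|U(B)|\approx\sum(r+1)\) iterated with \(V\). One then verifies that the claimed closed forms satisfy this recursion and match at \(n=0\) and \(n=1\); the case \(n=0\) is the virtual \(\AEpoch(0)=(4,10)\) with \(\ell_A(0)=2\).

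This step is the main obstacle. The degree bookkeeping must be closed: one has to check that only finitely many statistics (say binomials up to degree \(4\)) are needed, so that the recursion is genuinely finite-dimensional. The boundary words \(E_n\) and \(O_n\) enter only as explicit arithmetic progressions, so their contributions are elementary sums. Once the length identities hold, \eqref{eq:boundaryrec1} and \eqref{eq:boundaryrec2} reduce to polynomial identities between the stated closed forms:
\begin{itemize}
\item \(S_B(n)-R_A(n-1)=L_A(n)\);
\item \(S_A(n)-R_B(n)=L_B(n+1)\).
\end{itemize}
These can be checked by expanding, since the \(4^n\) terms cancel as \(96\cdot4^{n}\) against \(4\cdot 96\cdot 4^{n-1}/\ldots\) coefficients. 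The base case is \(S_A(0)=36\), \(R_A(0)=38\), consistent with Lemma~\ref{lem:alignment} where \(\Abridge_1\) ends at block \(73\). The interval description \eqref{eq:factorintervals1}--\eqref{eq:factorintervals2} then follows by induction on \(n\). This uses the global order of factors: the factors appear in the order \(\Abridge_n\), \(\BEpoch(n)\), \(\Bbridge_{n+1}\), \(\AEpoch(n)\), which is the order in which the state judgements chain, with the terminal and initial states of consecutive factors agreeing (\(17455430\), \(17373884\), \(1150227\), \(1070714\)).
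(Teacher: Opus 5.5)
Your bridge-length count (nested sums of $|P_m|=|Q_m|=m(m+3)/2$, evaluated by Faulhaber or hockey-stick) is fine and agrees with the paper. So is your reduction of \eqref{eq:boundaryrec1}--\eqref{eq:boundaryrec2} to polynomial identities. The gap is in the epoch lengths, exactly where you flag it: the factor recursions do not give a finite system. Since $\pi_r$ and $\rho_r$ both enumerate $\{1,\dots,r\}$, the factorization $A_n=E_nU(B_n)$ gives
\[
\sum_{w\in A_n}f(w)=\sum_{w\in E_n}f(w)+\sum_{r\in B_n}\Bigl(f(0)+\sum_{s=1}^{r}f(s)\Bigr).
\]
So a degree-$d$ statistic of $A_n$ becomes a degree-$(d+1)$ statistic of $B_n$. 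Through $B_n=O_{n-1}V(A_{n-1})$ it then becomes a degree-$(d+2)$ statistic of $A_{n-1}$. Computing $\sum_{w\in A_n}\binom{w+6}{4}$ this way needs statistics of degree $4+2k$ at level $n-k$. Raising the degree at each step is exactly what prevents closure. So ``one higher degree, hence a finite triangular recursion'' does not follow, and as written you have no recursion against which to check $\ell_A,\ell_B$ for all $n$.

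The missing ingredient is a recursion of bounded degree. The paper works directly with lengths. It derives the coupled recurrences
$\ell_A(n)=2\ell_B(n)+\frac13(n+1)(n+2)(n^2+5n+3)$ and $\ell_B(n+1)=2\ell_A(n)+\frac16(n+1)(n+2)(2n^2+14n+21)$,
with $\ell_A(0)=2$ and $\ell_B(1)=11$, and checks that the closed forms are their unique solution. The factor $2$ matches the fact that both source projections in Lemma~\ref{lem:portfactor} contain a full copy of the opposite epoch.

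Alternatively, you can repair your statistics approach by using the insertion recursion \eqref{eq:rankrecursion} instead of the factor recursion. There each old entry is shifted by $2$, which preserves degree. The inserted words consist only of $0$s and $1$s, in numbers at most quadratic in the old entry ($P_r$ and $Q_r$ each have $r(r+1)/2$ zeros and $r$ ones). For example, $(|A_n|,\sum w,\sum w^2)$ then evolves by a fixed $3\times 3$ matrix plus explicit inhomogeneous terms. That matrix has characteristic polynomial $-\lambda^2(\lambda-4)$, which is where $4^n$ comes from. Adjoining $\sum w^3$ and $\sum w^4$ keeps the system finite, and the $B$-side gives the same matrix. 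You would then have to solve that whole system, not merely test the two closed forms.
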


\begin{proof}
Count the target-factor column of the two token tables in
Section~\ref{sec:rankgrammar}, use the gap lengths following
\eqref{eq:G3A}--\eqref{eq:G4B}, and substitute the mutual rank
factorizations \eqref{eq:rankfactorA}--\eqref{eq:rankfactorB}.  Inclusive
boundary edges then give the coupled recurrences
\begin{align}
\ell_A(n)&=2\ell_B(n)
 +\frac{(n+1)(n+2)(n^2+5n+3)}3,\label{eq:ellrecA}\\
\ell_B(n+1)&=2\ell_A(n)
 +\frac{(n+1)(n+2)(2n^2+14n+21)}6,
\label{eq:ellrecB}
\end{align}
with \(\ell_A(0)=2\) and \(\ell_B(1)=11\).  Substitution verifies that
\eqref{eq:ellA}--\eqref{eq:ellB} are the unique solutions.

Similarly,
\[
 |P_m|=\sum_{r\in\pi_m}(r+1),\qquad |Q_m|=|P_m|,
\]
and the definitions of \(H_m,K_m\) turn the bridge products
\eqref{eq:Abridgeword}--\eqref{eq:Bbridgeword} into nested sums of these
quantities.  The hockey-stick identity
\[
\sum_{j=0}^{q}\binom{j+d}{d}=\binom{q+d+1}{d+1}
\]
gives \(L_A,L_B\), including the finite value
\(|C_{5,2}|=20=L_B(1)\).  Appending in the global order
\(\Abridge_n,\BEpoch(n),\Bbridge_{n+1},\AEpoch(n)\) gives
\eqref{eq:boundaryrec1}--\eqref{eq:boundaryrec2}; direct substitution
gives the displayed closed forms for \(S_A,S_B\).
\end{proof}

The source prefixes and suffixes are located by four zero-based bridge
offsets:
\begin{align}
q_{A,11}(n)&=\frac{n(n+1)^2(n+2)}3,
&q_{A,6}(n)&=\frac{(n+2)(2n^3+8n^2+21n+27)}6,
\label{eq:Aanchors}\\
q_{B,7}(n)&=\frac{n(n+1)(2n^2+2n-1)}6,
&q_{B,9}(n)&=\frac{n^4+4n^3+11n^2+20n+15}{3}.
\label{eq:Banchors}
\end{align}

\begin{lemma}[Bridge anchors]\label{lem:bridgeanchors}
In \(\Abridge_n\), the last \(11\) is at offset \(q_{A,11}(n)\) and
the first subsequent \(6\) is at offset \(q_{A,6}(n)\).  In
\(\Bbridge_n\), the last \(7\) and first subsequent \(9\) are at offsets
\(q_{B,7}(n)\) and \(q_{B,9}(n)\), respectively.  The B-statement at
\(n=1\) refers to \(\widehat{\Bbridge}_1=C_{5,2}\).
\end{lemma}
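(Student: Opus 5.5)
The plan is a direct counting argument on the explicit bridge formulas \eqref{eq:Abridgeword} and \eqref{eq:Bbridgeword}. First we locate the anchor letters. The letters \(11\) and \(6\) occur in \(\Abridge_n\) only as the displayed separators. The words \(H_m\) use only the letters \(5,0,2\), and \(\phi_2(P_{2j})\) uses only \(0,2\). The tails \(\EA,\DA\) use \(0,2,3\), and \(\phi_3(Q_{2j})\) uses \(0,3\). So the last \(11\) is the one closing the factor \(H_{2n-1}11\), and the first subsequent \(6\) is the separator immediately after \(\DA(n)\). The same reasoning applies to \(\Bbridge_n\). The letter \(7\) appears only in the factors \(7K_{2j}\) and the lone \(7\); the words \(K_m\) contain only \(0,2,8\). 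Hence the last \(7\) is the lone \(7\) preceding \(\prod_j(\phi_2(Q_{2j+1})8)\). The first following \(9\) opens the factor \(9\phi_3(P_{2n-1})\). This identification is letter-alphabet bookkeeping and holds at every level. It is also consistent with the descriptions of \(M_A(n)\) and \(M_B(n)\) given earlier.

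Second, we convert offsets to sums of lengths. We start from \(|P_m|=|Q_m|=\sum_{r\in\pi_m}(r+1)\), which for \(\pi_m=\{1,\dots,m\}\) equals \(m(m+3)/2\). This gives
\[
|H_m|=\sum_{r\le m}(1+|P_r|),\qquad |K_m|=\sum_{r\le m}(1+|Q_r|),
\]
both cubic in \(m\). For \(\Abridge_n\) we get
\[
q_{A,11}(n)=1+\sum_{j=1}^{n}|H_{2j-1}|+(n-1).
\]
Here the \(1\) counts the initial letter \(4\), and \(n-1\) counts the earlier \(11\)'s, so the offset is zero-based at the last \(11\). Next,
\[
q_{A,6}(n)=q_{A,11}(n)+1+\sum_{j=1}^n\bigl(1+|P_{2j}|\bigr)+1+|\EA(n)|+|\DA(n)|.
\]
For the tail lengths, \(|\EA(n)|=\sum_{i=0}^{n+1}(2i+1)+(n+1)=(n+2)^2+n+1\) and \(|\DA(n)|=\sum_{i=0}^{n}(2i+1)+(n+1)=(n+1)^2+n+1\). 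The \(\Bbridge_n\) offsets are analogous. \(q_{B,7}(n)\) is \(1+\sum_{j=1}^{n-1}(1+|K_{2j}|)\). \(q_{B,9}(n)\) adds \(1\), then \(\sum_{j=0}^{n-1}(|Q_{2j+1}|+1)\), then \(|\EB(n)|=\sum_{i=1}^{n+1}2i+(n+1)\), then \(|\DB(n)|=\sum_{i=0}^{n}2i+(n+1)\).

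Third, we evaluate these sums with the hockey-stick identity already used in Lemma~\ref{lem:factorlengths}. The cleanest way to close the argument rigorously is to avoid simplifying by hand. Instead, check that both sides satisfy the same first-order difference equation in \(n\) and agree at one base value. For example, \(q_{A,11}(n+1)-q_{A,11}(n)=|H_{2n+1}|+1\). This is a polynomial identity of degree at most \(3\), so it is verified by comparing coefficients, or equivalently at four values of \(n\). The base is \(n=1\) for the A-anchors, and \(n=2\) for the B-anchors, matching the range of \eqref{eq:Bbridgeword}. The remaining case is the B-statement at \(n=1\), which concerns the finite constant word \(\widehat{\Bbridge}_1=C_{5,2}\) of length \(20\). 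There one reads off the positions \(q_{B,7}(1)=1\) and \(q_{B,9}(1)=17\) directly from the stored word. As a consistency check, both offsets must be smaller than \(L_B(1)=20\).

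The main obstacle is not the algebra but the exact indexing conventions. These include the zero-based offsets, whether \(\pi_m\) for even versus odd \(m\) enters \(|P_m|\) uniformly (it does, since \(\pi_m\) is a permutation of \(\{1,\dots,m\}\)), the empty product at \(j\le n-1\) with \(n=1\), and the reversed-index product convention \(\prod_{j=n}^{1}\). We should also confirm that \(q_{A,6}(n)<L_A(n)\) and \(q_{B,9}(n)<L_B(n)\), which serves as a sanity check. We expect an off-by-one discrepancy to be the most likely failure point. We would test the resulting formulas against \(n=1,2\) by explicit expansion before trusting the difference-equation verification.
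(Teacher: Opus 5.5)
Your proposal is correct and follows the paper's proof. You locate the last \(11\) (resp.\ \(7\)) at the end of the \(H\)-tower (resp.\ \(K\)-tower) and the first subsequent \(6\) (resp.\ \(9\)) at the first boundary after the two tails, count the preceding factor lengths, and take the finite base values \(4,29,1,17\), with the last two read from \(C_{5,2}\). Checking the closed forms through first-order differences such as \(q_{A,11}(n+1)-q_{A,11}(n)=|H_{2n+1}|+1\), instead of summing directly with the hockey-stick identity, is only a cosmetic variation, and your explicit counts (for example \(|P_m|=m(m+3)/2\) and \(|E_{A,2}(n)|=(n+2)^2+n+1\)) agree with the stated formulas.
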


\begin{proof}
In \eqref{eq:Abridgeword} the relevant \(11\) is the end of the H-tower
and the relevant \(6\) is the first boundary after the two A-tails; in
\eqref{eq:Bbridgeword} the analogous positions are the end of the
K-tower and the first boundary after the B-tails.  Counting the preceding
factors with the length formulas used in Lemma~\ref{lem:factorlengths}
and applying the same hockey-stick identity gives
\eqref{eq:Aanchors}--\eqref{eq:Banchors}.  The four finite initial
offsets are \(4,29,1,17\), agreeing with the formulas.
\end{proof}

For the virtual level-one B-source bridge put
\(R_B^\circ(0)=16\), its global start in the finite aligned prefix, and
put \(R_B^\circ(k)=R_B(k)\) for \(k\ge1\).  Define the ordered absolute
head pairs at the two epoch boundaries by
\begin{align}
\mathbf a_{\rm in}(n)
 &=\bigl(S_B(n),\;R_A(n-1)+q_{A,6}(n)\bigr)
  =\bigl(S_B(n),\;16\,4^n+n^2+3n-1\bigr),\label{eq:Ain}\\
\mathbf a_{\rm out}(n)
 &=\bigl(R_B(n)+q_{B,7}(n+1),\;R_B(n)\bigr)\notag\\
 &=\left(\frac{192\,4^n-4n^3-27n^2-71n-90}{6},\;
          R_B(n)\right),\label{eq:Aout}\\
\mathbf b_{\rm in}(n)
 &=\bigl(S_A(n-1),\;R_B^\circ(n-1)+q_{B,9}(n)\bigr)
  =\bigl(S_A(n-1),\;8\,4^n+n^2+2n-2\bigr),\label{eq:Bin}\\
\mathbf b_{\rm out}(n)
 &=\bigl(R_A(n-1)+q_{A,11}(n),\;R_A(n-1)\bigr)\notag\\
 &=\left(\frac{96\,4^n-4n^3-21n^2-47n-60}{6},\;
          R_A(n-1)\right).\label{eq:Bout}
\end{align}
The letters \(\mathbf a\) and \(\mathbf b\) refer to the A- and B-epoch
boundaries, not to the two physical roles inside an ordered pair.

\begin{lemma}[Epoch head increments]\label{lem:epochincrements}
For a role-preserving token with source pair \((u,v)\), let its head
increment be \((|u|-1,|v|-1)\).  The complete increment table is
\[
\begin{array}{c|cccc}
 &0&U&D&1\\ \hline
A&
\left(1+\binom{r+4}{3},\binom{r+4}{2}\right)&
(1,1)&(5,4)&
\left(\binom{r+5}{3},1+\binom{r+5}{4}\right)\\[1mm]
B&
\left(\binom{r+4}{2},1+\binom{r+4}{3}\right)&
(4,5)&(1,1)&
\left(1+\binom{r+5}{4},\binom{r+5}{3}\right).
\end{array}
\]
Summing over the decorated token lists gives
\begin{align}
\Delta_A(n)&=\left(
\frac{48\,4^n-4n^3-24n^2-53n-45}{3},
\frac{48\,4^n-n^4-8n^3-29n^2-55n-45}{3}
\right),\label{eq:DeltaA}\\
\Delta_B(n)&=\left(
\frac{24\,4^n-4n^3-18n^2-32n-24}{3},
\frac{48\,4^n-2n^4-12n^3-37n^2-63n-48}{6}
\right).
\label{eq:DeltaB}
\end{align}
The A-formula holds for \(n\ge1\), the B-formula for \(n\ge2\), and
the finite boundary case is \(\Delta_B(1)=(6,5)\).
\end{lemma}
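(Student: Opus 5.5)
The lemma has two parts: the increment table for single tokens, and the summed epoch increments \(\Delta_A(n)\), \(\Delta_B(n)\). I would prove them in that order.

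\emph{The increment table.} For a role-preserving judgement \(z\Rightarrow z'\) with inclusive source words \((u,v)\), each head advances by the number of source blocks crossed, namely \(|u|-1\) and \(|v|-1\). The two rows come from different places.
\begin{itemize}
\item The direction tokens \(U\) and \(D\) are read off Table~\ref{tab:directiontokens}. For example, \(AD\) has \(|u|=6\) and \(|v|=5\), giving \((5,4)\).
\item The parameterized tokens come from \eqref{eq:A0zip}--\eqref{eq:B1zip}. Their source words have lengths \(|G_{3B}(r-1)|+2\), \(|\phi_3(Q_{r+2})|+2\), \(|K_{r+2}|+2\), \(|G_{4B}(r-1)|+3\), and so on.
\end{itemize}
These lengths follow from the binomial length formulas after \eqref{eq:G3A}--\eqref{eq:G4B}: \(|G_3(r-1)|=\binom{r+4}{3}\) and \(|G_4(r-1)|=\binom{r+5}{4}\). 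I also need \(|P_m|=|Q_m|=\sum_{r\in\pi_m}(r+1)\); the same hockey-stick count used in Lemma~\ref{lem:factorlengths} turns \(|P_{r+2}|+1\) into \(\binom{r+4}{2}\) and \(|K_{r+2}|+1\) into \(\binom{r+5}{3}\). The rank-\((-1)\) extensions are checked against the four constant edges.

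\emph{Splitting the sum.} Summing over the decorated token list \(\mathcal T_A(n)\), I separate the contributions by token kind.
\begin{itemize}
\item The numbers of \(U\) and \(D\) tokens equal the number of returns to zero in \(\theta(A_{n+1})\), that is \(\#0(A_{n+1})\) minus boundary corrections.
\item The \(0\)-tokens and \(1\)-tokens each carry every entry of \(A_n\) exactly once, by the balance identities of Theorem~\ref{thm:rankfactor}.
\end{itemize}
So \(\Delta_A(n)\) reduces to linear combinations of \(\#0(A_{n+1})\) and of the rank moments \(\sum_{r\in A_n}\binom{r+4}{k}\) and \(\sum_{r\in A_n}\binom{r+5}{k}\) for \(k=2,3,4\).

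\emph{Evaluating the moments.} The factorizations \(A_n=E_nU(B_n)\) and \(B_{n+1}=O_nV(A_n)\) give coupled recurrences. For any weight \(f\),
\[
\sum_{r\in A_n}f(r)=\sum_{r\in E_n}f(r)+\sum_{r\in B_n}\Bigl(f(0)+\sum_{s\in\pi_r}f(s)\Bigr),
\]
and analogously for \(B_{n+1}\) in terms of \(A_n\). Each inner sum \(\sum_{s\in\pi_r}\binom{s+c}{k}\) over a parity-split interval is a polynomial in \(r\). Because the inner sum is polynomial, I would track the moments against a basis of polynomials closed under \(g\mapsto\sum_{s\le r}g(s)\); a cleaner route is to use \(\binom{\cdot}{k}\) together with a parity indicator.

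Each level then doubles, in the sense that every entry of the previous rank word produces one new contribution, which accounts for the \(4^n\) growth over two levels. Solving the resulting linear recurrences gives a term \(c\,4^n\) plus polynomials in \(n\) of degree at most \(4\).

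\emph{Closing the argument.} Rather than solve the recurrences symbolically, I would verify that the proposed closed forms \eqref{eq:DeltaA}--\eqref{eq:DeltaB} satisfy the same coupled first-order recurrences. The model is \eqref{eq:ellrecA}--\eqref{eq:ellrecB}: I expect recurrences of the form \(\Delta_A(n)=2\Delta_B(n)+\text{poly}\) and \(\Delta_B(n+1)=2\Delta_A(n)+\text{poly}\) componentwise. It then suffices to check the bases \(\Delta_A(1)\) and \(\Delta_B(2)\), computed from the explicit token lists, and the boundary case \(\Delta_B(1)=(6,5)\) by replaying \eqref{eq:B1EpochSource}.

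\emph{Cross-check and main obstacle.} By the port factorization, Lemma~\ref{lem:portfactor}, \(\Delta_A(n)\) must equal the lengths of the two source words minus \(1\): \(\ell_B(n)+q_{B,7}(n+1)\) and \((L_A(n)-q_{A,6}(n))+\ell_B(n)\). This gives a far shorter proof: the closed forms follow from Lemmas~\ref{lem:factorlengths} and~\ref{lem:bridgeanchors} by polynomial algebra, with the table serving as a consistency check. I would adopt this as the primary argument. The main obstacle is the boundary bookkeeping in this identification: the inclusive overlaps, the terminal \(1\) or \(4\), and the B half-loops. These must be matched to the \(-1\) in the head increments without any off-by-one error.
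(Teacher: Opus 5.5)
Your derivation of the increment table is the same as the paper's. You read $(|u|-1,|v|-1)$ off the eight source pairs and substitute $|G_3(r-1)|=\binom{r+4}{3}$, $|G_4(r-1)|=\binom{r+5}{4}$, $|P_m|=|Q_m|=m(m+3)/2$ and $|H_m|=|K_m|=\binom{m+3}{3}-1$. There is one slip: the second source word of \eqref{eq:A1zip} is $G_{4B}(r-1)\,12\,1$, of length $|G_{4B}(r-1)|+2$, not $+3$; with $+3$ you would get $2+\binom{r+5}{4}$.

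For the summed formulas your primary route is genuinely different. The paper sums the table directly. It distributes the ranks forward and backward over the token path, sums over $E_n,O_n,\pi_r,\rho_r$ with the hockey-stick identity, and inducts along \eqref{eq:rankfactorA}--\eqref{eq:rankfactorB} from the decorated $A_1,B_2$ lists. You instead use $|x\odot y|-1=(|x|-1)+(|y|-1)$ to identify the total with $(|u^*|-1,|v^*|-1)$, and read these lengths off Lemma~\ref{lem:portfactor} together with Lemmas~\ref{lem:factorlengths} and~\ref{lem:bridgeanchors}.

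This works, and there is no circularity, since those lemmas precede the present one and do not use it:
\begin{itemize}
\item $\ell_B(n)+q_{B,7}(n+1)$ and $L_A(n)-q_{A,6}(n)+\ell_B(n)$ reduce exactly to \eqref{eq:DeltaA};
\item $\ell_A(n-1)+q_{A,11}(n)$ and $L_B(n)-q_{B,9}(n)+\ell_A(n-1)$ reduce to \eqref{eq:DeltaB};
\item at $n=1$, the values $\ell_A(0)=2$, $q_{A,11}(1)=4$, $L_B(1)=20$, $q_{B,9}(1)=17$ give $(6,5)$.
\end{itemize}

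The trade-off is independence. Your argument is the identity $\mathbf a_{\rm out}(n)-\mathbf a_{\rm in}(n)=\Delta_A(n)$ of Theorem~\ref{thm:addressgluing} read backwards. It therefore inherits any error in the port factorization and makes the head-increment residuals of the symbolic audit tautological. The paper's count uses only token lengths and rank combinatorics, so it genuinely cross-checks the gluing. Your route is shorter and avoids the moment recurrences. Your secondary sketch of those recurrences is loose anyway: the number of $U$-tokens is $\#0(A_{n+1})-|A_n|$, where the correction counts the $1$-tokens, not a boundary term.

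Your ``boundary bookkeeping'' worry should be sharpened: on the B-side the issue is a role exchange, not an off-by-one. The tabulated B-row is in the tokens' own physical order, which is exchanged relative to the epoch order of \eqref{eq:BportA}--\eqref{eq:BportB}. At $n=1$, the $BU$ pair $((9,0,3,4,10),(10,4,5,0,2,11))$ of Table~\ref{tab:directiontokens} appears inside \eqref{eq:B1EpochSource} with its first word in the second stream and vice versa. Hence $\Delta_B(n)$ equals the componentwise-exchanged sum of the interior B-entries plus $(1,1)$ from the two half-loops.

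For $n=2$, the interior tokens of $\theta(B_3)$ sum to $(52,63)$, and indeed $\Delta_B(2)=(64,53)=(63,52)+(1,1)$. Your computation via the port words is stated in the epoch frame, so your final formulas are right. However, the sentence ``summed increment $=(|u^*|-1,|v^*|-1)$'' and any table-based consistency check hold literally only on the A-side. The paper's direct summation needs the same exchange on the B-side, and its proof does not spell it out either.
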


\begin{proof}
The table is obtained directly from the eight source pairs in
Lemma~\ref{lem:tokenzipper} and Table~\ref{tab:directiontokens}; for
example, the A-zero sources have lengths
\(2+|G_{3B}(r-1)|\) and \(2+|Q_{r+2}|\).  Substitute the binomial gap
lengths, assign the ranks forward and backward as in
Section~\ref{sec:rankgrammar}, and sum over
\(E_n,O_n,\pi_r,\rho_r\).  The hockey-stick identity gives
\eqref{eq:DeltaA}--\eqref{eq:DeltaB}.  This is a simultaneous induction
under \eqref{eq:rankfactorA}--\eqref{eq:rankfactorB}; its bases are the
decorated \(A_1,B_2\) lists.  Direct marked replay of
\eqref{eq:B1EpochSource} has source lengths \(7,6\), giving \((6,5)\).
\end{proof}

Let the marked return-word alphabet be
\[
\widetilde\Sigma=\{(\sigma_j,j):j\ge0\},
\]
where \(\sigma_j\) is the type of the globally indexed block \(j\).  If
an unmarked source word \(u\) of length \(q\) begins at address \(p\),
its marked lift is the consecutive word
\[
u[p]=((\sigma_p,p),\ldots,(\sigma_{p+q-1},p+q-1)).
\]

\begin{lemma}[Marked exact composition]\label{lem:markedcomposition}
An exact source judgement with prescribed starting addresses has at most
one lift to \(\widetilde\Sigma\).  It has such a lift precisely when its
two unmarked source words agree with the corresponding consecutive global
factors.  Two lifted judgements compose under \(\odot\) exactly when their
common boundary is the same marked occurrence, not merely the same type.
\end{lemma}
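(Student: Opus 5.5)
The plan is to argue directly from the definition of the marked lift $u[p]$ and the inclusive overlap \eqref{eq:overlap}. We treat the three assertions in the order stated: uniqueness, existence criterion, and composition criterion. Throughout, a source judgement $z\xRightarrow[w]{u,v;\eps}z'$ with prescribed starting addresses $(p_u,p_v)$ is regarded as a pair of unmarked words together with two integers. A lift is a pair of marked words in $\widetilde\Sigma^*$ whose type projections are $u$ and $v$ and whose address components start at $p_u$ and $p_v$.

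\emph{Uniqueness.} In a lift, the $i$-th letter of the lifted $u$ must carry address $p_u+i$. This holds because source reads advance through consecutive blocks: each rule crosses a recorded number of source blocks, and an inclusive bridge lists them without gaps. The address component is therefore forced, and the type component is forced to be $\sigma_{p_u+i}$ by the definition of $\widetilde\Sigma$. So the only candidate is $(u[p_u],v[p_v])$, and there is at most one lift.

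\emph{Existence criterion.} The candidate $u[p_u]$ projects to $u$ exactly when $u_i=\sigma_{p_u+i}$ for $0\le i<|u|$. This is precisely the statement that $u$ equals the consecutive global factor of length $|u|$ beginning at $p_u$. The same argument applies to $v$, which gives the ``precisely when'' clause.

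\emph{Composition.} Take two lifted judgements that are to be composed as in Lemma~\ref{lem:composition}. In the case $\eps_1=0$, the streams pair as $u_1\odot u_2$ and $v_1\odot v_2$. In the case $\eps_1=1$, they pair crosswise as $u_1\odot v_2$ and $v_1\odot u_2$. For each paired stream $x\odot y$, the unmarked operation requires only that the last type of $x$ equal the first type of $y$. The marked version requires that the last marked letter of $x[p_x]$ equal the first marked letter of $y[p_y]$, that is, $p_y=p_x+|x|-1$ as well as equality of types. When this holds, $x[p_x]\odot y[p_y]=(x\odot y)[p_x]$, which is a consecutive factor. Hence the composite is again a lifted judgement, and by uniqueness it is the lift of the unmarked composite. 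Conversely, suppose the composite unmarked judgement has a lift with start $p_x$. By uniqueness, its restriction to the tail of $x\odot y$ must start at $p_x+|x|-1$. So the second judgement's prescribed address must equal this value, and the shared boundary is the same occurrence.

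The main obstacle is the tacit claim in the uniqueness step that a single source stream reads strictly consecutive blocks, with no skips and no backward moves. I would justify it from the rule data recorded in Section~\ref{sec:c7}, namely the inclusive bridges and the number of crossed blocks, together with Lemma~\ref{lem:cursorsync}. The latter shows that each head advances monotonically by the increments $2(1-s_{j-1})\in\{0,2\}$ and so passes through every intermediate occurrence. Care is also needed with the orientation swap, so that the correct physical stream's address is compared at each boundary; this is handled by tracking $\eps$ exactly as in Lemma~\ref{lem:composition}.
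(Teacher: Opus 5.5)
Your proposal is correct and follows the paper's argument, written out in more detail: consecutive integer marks force the lift, the rules ignore the passive marks, equality of the terminal and initial marks is exactly the single-read condition at an inclusive splice, and then Lemma~\ref{lem:composition} applies, including the crosswise pairing when \(\eps_1=1\). The consecutiveness you flag as the main obstacle is already built into the paper's definition of the marked lift \(u[p]\) as a consecutive word, so you do not need to appeal to Lemma~\ref{lem:cursorsync} or to monotone head increments.
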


\begin{proof}
Consecutive integer marks force the lift uniquely.  The 122 local rules
read types, offsets, and roles, so adjoining a passive occurrence mark
does not alter a rule.  At an inclusive splice, equality of the terminal
and initial integer mark is exactly the condition that the shared block
is read once.  Lemma~\ref{lem:composition} then applies verbatim.
\end{proof}

\begin{theorem}[Absolute head-address gluing]\label{thm:addressgluing}
The source transductions lift from type words to globally indexed
occurrences.  More precisely, for every \(n\ge1\), the four consecutive
head maps are
\begin{align}
\BEpoch(n)&:\mathbf b_{\rm in}(n)\longmapsto
                    \mathbf b_{\rm out}(n),\label{eq:Baddressmap}\\
\Bbridge_{n+1}&:\mathbf b_{\rm out}(n)\longmapsto
                    \mathbf a_{\rm in}(n),\label{eq:Bbridgeaddressmap}\\
\AEpoch(n)&:\mathbf a_{\rm in}(n)\longmapsto
                    \mathbf a_{\rm out}(n),\label{eq:Aaddressmap}\\
\Abridge_{n+1}&:\mathbf a_{\rm out}(n)\longmapsto
                    \mathbf b_{\rm in}(n+1).\label{eq:Abridgeaddressmap}
\end{align}
The initial \(\Abridge_1\) maps the finite aligned prefix to
\(\mathbf b_{\rm in}(1)=(36,33)\).  At every splice, the state code,
orientation, absolute block addresses, offsets, and generated target
prefix all agree.
\end{theorem}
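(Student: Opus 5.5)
The proof will be an induction on \(n\) along the global order \(\Abridge_n,\BEpoch(n),\Bbridge_{n+1},\AEpoch(n),\Abridge_{n+1}\). By Lemma~\ref{lem:factorlengths} these target intervals are consecutive. The hypothesis at stage \(n\) is that the current splice has the displayed state code, orientation zero (after the bridge swap is absorbed), and absolute head pair \(\mathbf b_{\rm in}(n)\). The induction then carries this hypothesis through the four factors, invoking Lemma~\ref{lem:markedcomposition} at each splice.

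\textbf{Base.} Lemma~\ref{lem:alignment} gives marker \(793\), state \(17455430\) and addresses \((36,33)\) at target block \(74=S_B(1)\). The formulas give \(S_A(0)=36\) and \(R_B^\circ(0)+q_{B,9}(1)=16+17=33\), which is \(\mathbf b_{\rm in}(1)\).

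\textbf{Inductive step, four head maps.}

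\emph{B-epoch.} Theorem~\ref{thm:epochsources} (or \eqref{eq:B1EpochSource} at \(n=1\)) supplies the unmarked judgement with source words \(\AEpoch(n-1)\pref_{11}(\Abridge_n)\) and \(\suf_9(\Bbridge_n)\AEpoch(n-1)4\). By Lemma~\ref{lem:factorlengths}, the first word is the consecutive global factor starting at \(S_A(n-1)\). By Lemma~\ref{lem:bridgeanchors}, the second starts at \(R_B^\circ(n-1)+q_{B,9}(n)\), where \(\suf_9(\Bbridge_n)\) is followed by \(\AEpoch(n-1)\) and then the first letter \(4\) of \(\Abridge_n\). These are exactly the two coordinates of \(\mathbf b_{\rm in}(n)\), so Lemma~\ref{lem:markedcomposition} lifts the judgement uniquely. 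The terminal heads are the last occurrences read, shifted by the increments \(\Delta_B(n)\) of Lemma~\ref{lem:epochincrements}. I would verify \(\mathbf b_{\rm in}(n)+\Delta_B(n)=\mathbf b_{\rm out}(n)\) as a polynomial-exponential identity in \(n\), checking the coefficients of \(4^n\) and of the powers of \(n\) separately, and checking \(\Delta_B(1)=(6,5)\) by hand.

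\emph{A-epoch.} This is identical, using \eqref{eq:AEpochSource} and \(\Delta_A(n)\), and it sends \(\mathbf a_{\rm in}(n)\) to \(\mathbf a_{\rm out}(n)\).

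\emph{Bridges.} Theorem~\ref{thm:completebridges} gives the source words \(M_A(n)\), \(\Abridge_n1\) for \(\Bbridge_{n+1}\), and \(M_B(n+1)\), \(\Bbridge_{n+1}4\) for \(\Abridge_{n+1}\). These include the final orientation \(1\). By Lemma~\ref{lem:bridgeanchors}, \(M_A(n)\) is the factor of \(\Abridge_n\) starting at offset \(q_{A,11}(n)\), and \(\Abridge_n1\) starts at \(R_A(n-1)\) and ends on the first letter \(1\) of \(\Bbridge_{n+1}\)'s predecessor position. So the required starting addresses are exactly \(\mathbf b_{\rm out}(n)\). The swap with \(\eps=1\) exchanges the two physical roles, and this is why the ordered pairs \(\mathbf a_{\rm in}\) and \(\mathbf b_{\rm in}\) list the coordinates in the order given. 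The terminal addresses are start \(+|u|-1\) and start \(+|v|-1\), using \(|M_A(n)|=q_{A,6}(n)-q_{A,11}(n)+1\) and \(L_A,L_B\). I would then check that after the role exchange these equal \(\mathbf a_{\rm in}(n)\) and \(\mathbf b_{\rm in}(n+1)\) respectively.

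\textbf{States, lookaheads and the main obstacle.} The state codes chain by direct inspection: \(17455430\to17373884\to1150227\to1070714\to17455430\). Each terminal lookahead is forced by the first type of the next factor.

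I expect the main obstacle to be the noncircular bookkeeping of the source side. Lemma~\ref{lem:markedcomposition} requires that the unmarked source word actually equals the global factor at the claimed address. That global factor lies in earlier target intervals already produced by the induction, so it is determined by the previously constructed type word. I would therefore make the stronger claim part of the hypothesis: the generated target prefix through \(R_A(n-1)\) is the concatenation of all earlier factors. This makes source membership a consequence of Lemma~\ref{lem:factorlengths} and Lemma~\ref{lem:portfactor}. One must also confirm that all source addresses precede the current target interval, which the closed forms show up to the causality potential.

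The remaining work is routine verification of the polynomial identities. I would organise it via \eqref{eq:boundaryrec1}--\eqref{eq:boundaryrec2}, so that each identity reduces to a single length recurrence rather than being expanded blindly.
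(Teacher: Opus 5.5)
Your proposal is correct and follows essentially the paper's own argument: the factor intervals and bridge anchors fix the starting addresses, and Lemma~\ref{lem:markedcomposition} lifts each epoch and bridge judgement. The four head maps then reduce to identities in the basis $\{4^n,1,n,\dots,n^4\}$, with the $\eps=1$ swap accounting for the coordinate order after each bridge, and the base comes from Lemma~\ref{lem:alignment} together with the replay $(36,33)\mapsto(42,38)$ of \eqref{eq:B1EpochSource}. One small index slip: at the splice $\mathbf b_{\rm in}(n)$, your strengthened hypothesis must cover the generated prefix through $S_B(n)-1$, i.e.\ all of $\Abridge_n$, and not just through $R_A(n-1)$, because the B-epoch reads $\pref_{11}(\Abridge_n)$.
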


\begin{proof}
By Lemma~\ref{lem:bridgeanchors}, the two source words in
\eqref{eq:AEpochSource} begin exactly at the components of
\(\mathbf a_{\rm in}(n)\): the first at \(S_B(n)\), the second at the
first \(6\) of \(\Abridge_n\).  Their terminal occurrences are the last
\(7\) of \(\Bbridge_{n+1}\) and the boundary \(R_B(n)\), which are the
components of \(\mathbf a_{\rm out}(n)\).  Equivalently,
\(\mathbf a_{\rm out}(n)-\mathbf a_{\rm in}(n)=\Delta_A(n)\).
The same argument with the last \(11\) of \(\Abridge_n\) and first
subsequent \(9\) of \(\widehat{\Bbridge}_n\) gives
\(\mathbf b_{\rm out}(n)-\mathbf b_{\rm in}(n)=\Delta_B(n)\).
At \(n=1\), the marked replay of \eqref{eq:B1EpochSource} is explicitly
\[
(36,33)\longmapsto(42,38),
\]
which is the missing B-family base case.

The complete bridge maps, including their domains, are
\begin{align*}
\Abridge_m:(p_A,p_B)&\longmapsto
 \left(p_B+L_B(m),
 p_A+\frac{(m+2)(4m^2+13m+15)}6\right),&&m\ge1,\\
\Bbridge_m:(p_A,p_B)&\longmapsto
 \left(p_B+L_A(m-1),
 p_A+\frac{(m+1)(4m^2+11m+12)}6\right),&&m\ge2.
\end{align*}
They follow by counting the two exact source words in
\eqref{eq:Abridge}--\eqref{eq:Bbridge}.  Substitution of
\eqref{eq:boundaryrec1}--\eqref{eq:Banchors} proves
\eqref{eq:Bbridgeaddressmap} and \eqref{eq:Abridgeaddressmap}
identically in \(n\).  This calculation is independently audited by
\path{code/phase17_symbolic_address_checker.py}.  The
checker represents every expression exactly in the coefficient basis
\(\{4^n,1,n,n^2,n^3,n^4\}\) over \(\mathbb Q\) and reduces \(34\)
factor-length, boundary, head-increment, anchor, and bridge-map residuals
coefficientwise to zero.  It uses no sampled rank or level bound; the
machine-readable residual list is
\path{data/phase17_symbolic_address_residuals.csv}.
Lemma~\ref{lem:alignment} supplies the separate
\(\Abridge_1\) connection to \(\mathbf b_{\rm in}(1)=(36,33)\).

Finally, Lemma~\ref{lem:portfactor} identifies the source factors in their
exact order, while the anchor formulas above identify their global starts.
Lemma~\ref{lem:markedcomposition} therefore lifts every token, epoch, and
bridge to consecutive marked occurrences.  Induction around the four
maps \eqref{eq:Baddressmap}--\eqref{eq:Abridgeaddressmap} shows that each
terminal marked pair is exactly the initial marked pair of the next
factor.  Erasing the marks recovers the unmarked source equations.
\end{proof}

The global factor cycle is shown in Figure~\ref{fig:factorcycle}.

\begin{figure}[htbp]
\centering
\begin{tikzpicture}[
  node distance=18mm and 25mm,
  state/.style={draw,rounded corners,fill=blue!5,text width=32mm,
                minimum height=13mm,align=center},
  arr/.style={-{Latex[length=2.2mm]},thick}
]
\node[state] (ab) {\(\Abridge_n\)\\\(1070714\to17455430\)};
\node[state,right=of ab] (be) {\(\BEpoch(n)\)\\\(17455430\to17373884\)};
\node[state,below=of be] (bb) {\(\Bbridge_{n+1}\)\\\(17373884\to1150227\)};
\node[state,left=of bb] (ae) {\(\AEpoch(n)\)\\\(1150227\to1070714\)};
\draw[arr] (ab)--(be);
\draw[arr] (be)--(bb);
\draw[arr] (bb)--(ae);
\draw[arr] (ae)--(ab);
\node[draw,dashed,fit=(ab)(be)(bb)(ae),inner sep=8mm,
      label=below:{one global level cycle}] {};
\end{tikzpicture}
\caption{The four-factor global state cycle.  Each arrow denotes a complete
all-level source-word transduction.}
\label{fig:factorcycle}
\end{figure}

The initial type-word seed consists of:
\[
(0,0,3),\qquad C_{1,5},\qquad C_{5,2},\qquad C_{2,3}C_{3,8}.
\]
It ends at state \(1070714\), the entry state of \(\Abridge_1\).
Together with the separately replayed boundary identity
\eqref{eq:B1EpochSource}, these are the five finite transduction
obligations used at the base of the global induction.

\begin{theorem}[Global marked-kernel membership]\label{thm:globalmembership}
At the entry of every factor in the infinite construction, the tuple
\[
(z,p_A,\delta_A,p_B,\delta_B,\eps,\text{generated marked prefix})
\]
has the boundary value prescribed by Theorem~\ref{thm:addressgluing}.
Every target block is realized by exactly one of the \(122\) local rules,
and every successor state belongs to the \(92\)-state synchronized domain.
\end{theorem}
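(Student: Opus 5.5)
The plan is an induction along the global factor order
\[
\text{seed},\ \Abridge_1,\ \BEpoch(1),\ \Bbridge_2,\ \AEpoch(1),\ \Abridge_2,\ \BEpoch(2),\ \ldots,
\]
that is, along the four-factor cycle of Figure~\ref{fig:factorcycle}. The induction variable is the pair (level \(n\), position in the cycle). The hypothesis at each factor entry is that the full tuple \((z,p_A,\delta_A,p_B,\delta_B,\eps,\text{marked prefix})\) equals the boundary value of Theorem~\ref{thm:addressgluing}. In addition, every target block generated so far is realized by exactly one of the \(122\) rules, with every intermediate state in the \(92\)-state domain.

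\textbf{Base.} I would use Lemma~\ref{lem:alignment}. It identifies the direct recurrence prefix with the seed type word \((0,0,3)C_{1,5}C_{5,2}C_{2,3}C_{3,8}\) followed by \(\Abridge_1\). The handoff state is \(17455430\) with addresses \((36,33)=\mathbf b_{\rm in}(1)\) at block \(74\). The five finite seed obligations, together with \eqref{eq:B1EpochSource}, are replayed rule by rule against the table, so membership in the \(92\)-state domain holds there by inspection. Lemma~\ref{lem:cursorsync}, started at marker \(78\), guarantees that the stored offsets really represent the scalar heads \eqref{eq:markersync}.

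\textbf{Step.} Suppose the hypothesis holds at the entry of a factor \(F\), one of \(\BEpoch(n),\Bbridge_{n+1},\AEpoch(n),\Abridge_{n+1}\). The corresponding all-level judgement then applies: Theorem~\ref{thm:epochsources} for epochs, Theorem~\ref{thm:completebridges} for bridges. Each such judgement is built by Lemma~\ref{lem:composition} from local rules, and that fixes the start state, the end state, and the orientation. Its two source words are identified, through Lemma~\ref{lem:portfactor} and Lemma~\ref{lem:bridgeanchors}, with factors of the already constructed global type word beginning at the hypothesised addresses. The point I must verify here is that those source words lie inside the already generated marked prefix. This is what the lag bound (the causality potential, \(j-p\ge38\)) is for. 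I would cite it, or if necessary check it from the closed forms \eqref{eq:Ain}--\eqref{eq:Bout} against the factor intervals \eqref{eq:factorintervals1}--\eqref{eq:factorintervals2}: source addresses of level \(n\) factors lie in level \(\le n\) epochs and bridges, which start strictly earlier. Lemma~\ref{lem:markedcomposition} then gives a unique marked lift. The exit addresses are those of \eqref{eq:Baddressmap}--\eqref{eq:Abridgeaddressmap}, so the hypothesis propagates to the next factor.

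\textbf{Uniqueness and domain.} Every block in \(F\) is produced by some rule of the composed judgement, so existence is clear. Uniqueness follows from item~6 of Proposition~\ref{prop:finite-semantic}: once the physical source streams are fixed, which the marked lift ensures, at most one rule per key is compatible. Every intermediate state is a state of a rule in the table, hence lies in the \(92\)-state domain.

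\textbf{Main obstacle.} I expect the real obstacle to be the causality and consistency point, namely that the source words demanded by the judgement at level \(n\) coincide with the marked blocks actually generated, rather than merely having matching types. First I would rely on the exact anchor and address algebra of Theorem~\ref{thm:addressgluing}, combined with the inequality between source addresses and the current target index, to make the induction well-founded. Second, I would order the induction by target block index rather than by factor, so that each rule application reads only blocks whose realization is already established.
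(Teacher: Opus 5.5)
Your proposal follows the paper's proof essentially step for step: induction over the factor sequence $\Abridge_1,\BEpoch(1),\Bbridge_2,\AEpoch(1),\Abridge_2,\ldots$, with the base from Lemma~\ref{lem:alignment} and \eqref{eq:B1EpochSource}, the steps from Theorems~\ref{thm:completebridges} and~\ref{thm:epochsources}, the splices from Theorem~\ref{thm:addressgluing}, and uniqueness from item~6 of Proposition~\ref{prop:finite-semantic} together with existence from the exact factor transduction. The one real difference is that you fold causality into this induction, which the paper does not need here (the global type word is defined a priori by the grammar, so the marked lift is a pure word identity), and citing Proposition~\ref{prop:lag} here would be circular as the paper is organized, since Corollary~\ref{cor:backward} is derived from this theorem; your fallback of reading backwardness off the address formulas \eqref{eq:Ain}--\eqref{eq:Bout}, or a simultaneous induction on block index, avoids that circularity.
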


\begin{proof}
Induct over the factor sequence
\[
\Abridge_1,\BEpoch(1),\Bbridge_2,\AEpoch(1),
\Abridge_2,\BEpoch(2),\Bbridge_3,\AEpoch(2),\ldots
\]
Lemma~\ref{lem:alignment} proves the complete finite base through
\(\Abridge_1\), including the absolute addresses at block \(74\), and
\eqref{eq:B1EpochSource} supplies the exceptional first B-epoch.
Theorem~\ref{thm:completebridges} handles both bridge families, and
Theorem~\ref{thm:epochsources} handles both epoch families.  Theorem
\ref{thm:addressgluing}, rather than state-code equality alone, shows
that the terminal marked source occurrences, offsets, roles, and target
prefix of one factor are the initial data of the next factor.

Within each factor, every local step is in the 122-rule table and its
successor is in the synchronized domain.  If several table rows have the
same state/current/next key, Proposition~\ref{prop:finite-semantic}
shows that at most one can match both physical source streams; the exact
factor transduction supplies one, hence exactly one.  Induction over the
four-factor cycle therefore preserves the entire displayed tuple and
never leaves the finite domain.
\end{proof}

\section{Causality}\label{sec:causality}

The local rule table defines a finite weighted role graph.  A node records
a synchronized state together with a physical role.  An edge weight is
the change in the lag
\[
j-p_A\quad\text{or}\quad j-p_B
\]
under the corresponding local rule.

The graph has 184 role nodes and 244 edges.  After the finite \(B_1\)
entry, 176 role nodes, 88 cursor states, and 118 rules are reachable.
The remaining four rules occur only in the finite seed.

\begin{proposition}[Lag potential]\label{prop:lag}
There exists a nonnegative integer potential \(\Phi\) on the reachable
role graph such that every edge \(u\to v\) of weight \(w\) satisfies
\[
\Phi(v)\le \Phi(u)+w.
\]
At the \(B_1\) entry, its values for the physical A- and B-roles are
\(0\) and \(3\), respectively.
There is no reachable negative cycle.  Starting at the \(B_1\) entry,
\[
j-p_A\ge38,\qquad j-p_B\ge38
\]
for every subsequent local rule.
\end{proposition}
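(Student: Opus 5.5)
The plan is to treat the statement as a shortest-path (potential) certificate on a finite weighted graph, combined with the global membership theorem. First I fix the graph. Its nodes are pairs \((z,\iota)\), where \(z\) is one of the synchronized states and \(\iota\in\{A,B\}\) is a physical role. Each of the 122 rules from state \(z\) to \(z'\) gives two edges, \((z,\iota)\to(z',\iota')\). Here \(\iota'=\iota\) unless the target type is an odd swap type (\(1,4,10,12\)), in which case the roles are exchanged. The weight of an edge is the increment of \(j-p_\iota\) across the rule: \(+1\) for the target block, minus the number of source blocks crossed by the stream that the rule assigns to role \(\iota\). Both quantities are stored in every rule row, so the graph with 184 nodes and 244 edges is explicit. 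Reachability from the \(B_1\) entry (state \(17455430\) with its two roles) is then computed by finite search; this yields the 176 nodes and 118 rules, and confirms that the four excluded rules occur only in the seed type word of Lemma~\ref{lem:alignment}.

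Next I produce \(\Phi\) by Bellman--Ford on the reachable subgraph. I initialize the two entry roles at \(0\) and \(3\), matching the lags \(38\) and \(41\) at block \(74\) in Lemma~\ref{lem:alignment}, shifted by \(38\). I then relax edges, taking \(\Phi(v)\) as the minimum over paths of the accumulated weight; here \(\Phi\) is a lower bound on the potential lag change. The absence of a negative cycle is certified by the fact that the relaxation stabilizes within \(|V|\) rounds. Nonnegativity of every \(\Phi\) value, and the edge inequality \(\Phi(v)\le\Phi(u)+w\), are each checked edge by edge on the 244 edges. Strictly speaking this yields the dual (shortest-path) form of the stated inequality, \(\Phi(v)\le\Phi(u)+w\) at the optimum. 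The certificate is therefore just the table of values, and checking it is a finite, trivially verifiable computation.

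Finally I convert this into the lag bound by induction along the global rule sequence. By Theorem~\ref{thm:globalmembership}, every local step after the \(B_1\) entry is one of the reachable rules, applied with the correct physical role assignment. Let \(\lambda_\iota(t)=j-p_\iota-38\) at step \(t\). Initially \(\lambda_A=0=\Phi(\text{entry},A)\) and \(\lambda_B=3=\Phi(\text{entry},B)\). Along any realized walk, \(\lambda\) equals the initial value plus the sum of the edge weights. This sum is at least the shortest-path distance, which is \(\Phi(\text{node})\ge0\). Hence \(j-p_A\ge38\) and \(j-p_B\ge38\) hold at every subsequent rule. Since the source offsets sit inside the blocks \(p_A,p_B\), the scalar reads precede the target scalar positions.

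The main obstacle I expect is the weight bookkeeping at role swaps and at inclusive bridge boundaries. Within one rule, the number of crossed blocks must be attributed to the correct physical stream after a swap. The shared boundary block counted by \(\odot\) must also not be double counted, or else the walk sum and the true lag diverge. I would resolve this first by cross-checking the cumulative weights against the closed-form address maps of Theorem~\ref{thm:addressgluing}. Over each complete factor, the sum of weights must equal the target length minus the corresponding component of \(\Delta_A(n)\), \(\Delta_B(n)\), or the bridge map. This symbolic identity ties the finite graph to the all-level addresses.
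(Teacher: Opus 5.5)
Your proposal is correct and follows the paper's argument: the graph on pairs (state, physical role) with lag-change weights, initial slacks \(0\) and \(3\) coming from the lags \(38\) and \(41\) at block \(74\), and an induction along the realized run. The only difference is packaging: you compute \(\Phi\) as Bellman--Ford shortest-path distances, whereas the paper stores \(\Phi\) as a certificate and propagates the one-step invariant \(L(v)=L(u)+w\ge\Phi(u)+w\ge\Phi(v)\ge 0\). One small correction: \(\Phi\) is defined only on the \(176\) reachable role nodes, so the edge inequality is checked on the \(236\) reachable edges, and the \(8\) edges of the four seed-only rules are covered by the finite alignment of Lemma~\ref{lem:alignment}, not by \(\Phi\).
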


\begin{proof}
The potential values are listed in the Zenodo archive file
\begin{center}
\path{data/block_lag_potential_certificate.csv}.
\end{center}
The checker verifies the inequality on all \(236\) reachable role edges
(\(118\) recurrent rules, two physical roles per rule).  The eight edges
belonging to the four prefix-only rules are covered by the finite alignment
check.  Bellman--Ford finds no reachable negative cycle.  At the \(B_1\)
entry the target block is \(74\), while the
physical source blocks are \(36\) and \(33\), giving initial lags \(38\)
and \(41\).  Subtract \(38\) from the actual lag and call the resulting
slack \(L\).  Initially \(L=0,3\), matching the two base potentials.  If
an edge of weight \(w\) takes \(u\) to \(v\) and
\(L(u)\ge\Phi(u)\), then
\[
 L(v)=L(u)+w\ge\Phi(u)+w\ge\Phi(v)\ge0.
\]
Induction along every reachable path therefore preserves the lower bound
\(38\).
\end{proof}

\begin{corollary}[Strict backwardness]\label{cor:backward}
Every source bridge used by the global construction lies strictly in the
already generated prefix.
\end{corollary}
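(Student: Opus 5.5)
The plan is to combine the block-level lag bound of Proposition~\ref{prop:lag} with the scalar meaning of a source address from Section~\ref{sec:c7}, and to treat the finite seed separately. There are two regimes. Before block \(74\), every read belongs to the directly evaluated prefix. Lemma~\ref{lem:alignment} already certifies that the recurrence \eqref{eq:Qrec} has positive, strictly backward arguments through marker \(793\). So up to that point the source reads are backward by direct exact computation, and the four seed-only rules are covered there. From block \(74\) on, Theorem~\ref{thm:globalmembership} places every target block inside the \(118\) recurrent rules. Proposition~\ref{prop:lag} then gives \(j-p_A\ge38\) and \(j-p_B\ge38\) for every subsequent local rule.

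The main step converts a block lag into a scalar statement. A rule processing target block \(j\) generates \(C_7\)-letters at scalar markers inside occurrence \(j\). Its source reads are \(k_{m+4},k_{m+3}\) and their successors, which by Lemma~\ref{lem:cursorsync} lie inside the occurrences \(p_A,p_B\) at offsets \(\delta_A,\delta_B\). The inclusive bridge of the rule may cross further source blocks; the rule table records how many. I will bound the block index of every read in the bridge by \(p+c\), where \(c\) is the maximum number of crossed blocks over the \(122\) rules, a finite quantity read off the table. Every return word has length at least \(2\). Each \(C_7\)-letter at scalar time \(n\) also looks ahead at most to \(s_{n+2}\), and Lemma~\ref{lem:binarylocal} only needs \(s_{h_n},s_{k_n}\). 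So it suffices that the last read block, at most \(p+c\), plus one block of lookahead margin, still lies strictly before block \(j\). This holds as soon as \(c+1<38\). Because the bridges only move forward as the rule proceeds (the increments in Lemma~\ref{lem:epochincrements} are nonnegative), the bound at rule entry controls the whole rule.

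Next I will transfer this to the scalar inequality. Block \(p\) lies strictly before block \(j\), and global markers increase with block index. Hence every scalar position read, \(T_n+1\) or \(T_{n-1}+2\), is at most the start marker of block \(j\) minus a positive margin, so it is smaller than \(n-1\). It therefore lies in the already generated prefix of \(s\). The causality claim is exactly this statement, which is what the reconstruction in Theorem~\ref{thm:reconstruction} needs.

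I expect the main obstacle to be the bookkeeping at the \(C_7\) boundary. A seven-bit letter at time \(n\) contains \(s_{n+1},s_{n+2}\), so the "generated prefix" must be understood as the values determined through \eqref{eq:B} up to the current letter, not the whole current block. I would first check from the rule table that the crossed-block count and the two lookahead bits stay well inside the margin \(38\). This is a finite check of the same kind as Proposition~\ref{prop:finite-semantic}, and it turns the block inequality into strict scalar backwardness uniformly for all later rules.
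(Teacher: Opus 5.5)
Your proposal is correct and follows the paper's own proof. Theorem~\ref{thm:globalmembership} keeps the run in the reachable kernel, and Proposition~\ref{prop:lag} gives a block lag of at least \(38\). Each local bridge spans only a few consecutive source words: the paper records at most three A-words and four B-words, so \(c\le 3\) and \(c+1<38\) holds easily. Lemma~\ref{lem:alignment} covers everything before block \(74\).

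Your scalar bookkeeping for the \(C_7\) lookahead is a harmless elaboration. The appeal to Lemma~\ref{lem:epochincrements} is unnecessary (those are token-level increments), because Proposition~\ref{prop:lag} already bounds the entry address of every individual rule.
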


\begin{proof}
Theorem~\ref{thm:globalmembership} keeps the infinite run in the reachable
kernel.  Proposition~\ref{prop:lag} gives a positive block lag.  Each
local source bridge spans at most three A-words and four B-words and is
read from the certified source head, hence entirely within the generated
prefix.  The finite part before block \(74\), including
\(\Abridge_1\), is covered directly by Lemma~\ref{lem:alignment}.
\end{proof}

\section{Global semantics and proof of the main theorem}
\label{sec:mainproof}

\begin{theorem}[Global binary solution]\label{thm:globals}
The direct prefix and the infinite return-word construction determine
sequences \(T_n\) and \(s_n\) satisfying the initial data,
\eqref{eq:A} for every \(n\ge2\), and \eqref{eq:B} for every \(n\ge3\), with
\[
s_n\in\{0,1\}
\]
for all \(n\ge2\).
\end{theorem}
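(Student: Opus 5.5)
The plan is to build \(s_n\) and \(T_n\) block by block, by strong induction on the global target block index \(j\), and to read off \eqref{eq:A} and \eqref{eq:B} from the local certificate.

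\textbf{Base.} For all positions up to the start of block \(74\) (scalar marker \(793\)) I will take \(T_n,s_n\) from the direct evaluation of \eqref{eq:Qrec}. Lemma~\ref{lem:alignment} certifies that on this interval the recursive arguments are positive and backward, that \eqref{eq:A} and \eqref{eq:B} hold, and that the resulting \(C_7\)-prefix coincides with the type word \((0,0,3)C_{1,5}C_{5,2}C_{2,3}C_{3,8}\Abridge_1\). This fixes the initial data \(T_2=1,T_3=2,s_2=0,s_3=1\) and gives binary \(s_n\) there.

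\textbf{Inductive definition beyond the prefix.} From block \(74\) onward, Theorem~\ref{thm:globalmembership} prescribes the type \(\sigma_j\) of every block, hence the \(C_7\)-word \(W_{\sigma_j}\). I define \(s_n\) on that block as the third coordinate \(s_n\) of each code; the seven-bit overlaps of Proposition~\ref{prop:finite-semantic}(2) make this consistent across letters and across word boundaries. I then define \(T_n\) by \eqref{eq:A} from the base values. By construction these are binary, and \eqref{eq:A} holds by definition.

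\textbf{Verifying \eqref{eq:B}.} At each \(n\) inside block \(j\), the last two bits of \(\Cseven(n)\) in the certificate are the values the rule claims for \(s_{h_n},s_{k_n}\). I will show that these agree with the actual \(s\) at the actual head positions \(h_n=T_n+1\), \(k_n=T_{n-1}+2\), which takes three ingredients:
\begin{enumerate}
\item Lemma~\ref{lem:cursorsync}, propagated from the base at marker \(78\), shows that the certified pair (source addresses plus offsets) represents \((k_{m+4},k_{m+3})\) at every block start \(m\). Inside a block, the \(k\)'s are advanced by the same identity \(k_{j+2}=k_j+2(1-s_{j-1})\).
\item Theorem~\ref{thm:addressgluing} and Lemma~\ref{lem:markedcomposition} show that the source blocks the rule reads are the genuine globally indexed occurrences, so the source bits are the \(s\)-values already defined there.
\item Proposition~\ref{prop:lag} and Corollary~\ref{cor:backward} give a lag of at least \(38\) blocks. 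Hence those source occurrences precede block \(j\), and the definition is noncircular: strong induction on \(j\) applies.
\end{enumerate}
With the inputs thus identified, Proposition~\ref{prop:finite-semantic}(1),(4) gives \eqref{eq:semanticbit} for every code used. This is exactly \eqref{eq:B}, and the bad configuration \eqref{eq:bad} is excluded, so \(s_{n+1}\in\{0,1\}\).

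\textbf{Main obstacle.} The hardest step is the head-identification in the second ingredient. I must check that the certified (block, offset) pair really equals the scalar \(T_n+1\) or \(T_{n-1}+2\) computed from the \(T\) defined via \eqref{eq:A}, including across the odd swap types \(1,4,10,12\), where the logical and physical roles exchange.

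I would handle this by a second induction carried alongside the first, asserting that scalar position equals the start of \(p\) plus \(\delta\). Its base is \((k_{82},k_{81})=(40,41)\) from Lemma~\ref{lem:alignment}. Its step uses the fact that each rule's offset update and crossed-block count coincide with the cursor transform of Lemma~\ref{lem:cursorsync}; the kernel-domain audit records this for all \(122\) rules. Uniqueness of the applied rule, from Proposition~\ref{prop:finite-semantic}(6), ensures the run being tracked is the one the transductions certify.
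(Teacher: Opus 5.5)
Your proposal is correct and follows essentially the same route as the paper. You take the direct prefix through marker \(793\) from Lemma~\ref{lem:alignment}, define \(s\) by the central bits of the concatenated return words and \(T\) by \eqref{eq:A}, identify the derived heads with the certified marked cursors via Lemma~\ref{lem:cursorsync} and the address gluing, and read \eqref{eq:B} and binarity off the finite semantic certificate, with Corollary~\ref{cor:backward} supplying causality. Your explicit side induction (scalar head position equals block start plus offset, seeded by \((k_{82},k_{81})=(40,41)\) and advanced rule by rule against the kernel-domain audit) unpacks the paper's one-sentence head identification; it adds detail but is not a different argument.
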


\begin{proof}
Lemma~\ref{lem:alignment} supplies the solution directly through the
handoff and identifies its \(C_7\)-coding with the first marked return
words.  The overlapping five-bit windows of adjacent \(C_7\)-codes agree,
so the central bits of the concatenated words define a unique extension
of \(s\).  Set \(T_2=1,T_3=2\), and thereafter define the two parity
subsequences recursively by
\[
T_{n+2}=T_n+2(1-s_n)\qquad(n\ge2).
\]
This defines a unique integer sequence \(T\).  Lemma~\ref{lem:cursorsync}
and the aligned base identify the heads derived from this \(T\) with the
marked kernel cursors at every block boundary and, by the local rule
updates, at every intermediate target position.

By Theorem~\ref{thm:globalmembership}, every future local
configuration belongs to the finite synchronized domain and uses the
correct absolute source occurrences.  Proposition
\ref{prop:finite-semantic} verifies \eqref{eq:B} and excludes the unique
bad configuration \eqref{eq:bad} in that domain.  Lemma
\ref{lem:cursorsync} shows that the same local rules preserve the head
update \eqref{eq:headupdate}.  Corollary
\ref{cor:backward} ensures that every referenced head bit already exists.
Induction over the scalar markers therefore produces a total binary
solution extending the direct prefix.
\end{proof}

\begin{proof}[Proof of Theorem~\ref{thm:main}]
Apply Theorem~\ref{thm:reconstruction} to the total binary solution from
Theorem~\ref{thm:globals}.  The reconstructed \(Q\) satisfies
\eqref{eq:Qrec}, has the prescribed initial values, and all recursive
arguments are positive and less than the current index.
\end{proof}

\section{Computational certificate and trusted base}
\label{sec:verification}

The Zenodo release contains the complete finite kernel, explicit word generators, all-level induction certificates, and deterministic checkers.
The proof uses finite verification only where the statement itself is
finite.

\begin{table}[htbp]
\centering
\small
\begin{tabularx}{\textwidth}{Y r r}
\toprule
certificate component & objects checked & violations\\
\midrule
stationary primitive paths & 25 & 0\\
stationary loop schemas & 4 & 0\\
tail and bridge connector primitives & 15 & 0\\
tail loop schemas & 6 & 0\\
synchronized states & 92 & 0\\
local transition rules & 122 & 0\\
semantic A/B windows & 28\,025 & 0\\
reachable role-graph edges & 236 & 0\\
finite seed transductions & 5 & 0\\
finite recurrence-to-kernel alignment & 71 blocks & 0\\
derived cursor transforms & 13 & 0\\
ambiguous selector rule pairs & 36 & 0\\
exact symbolic address identities & 34 & 0\\
\bottomrule
\end{tabularx}
\caption{Finite proof obligations checked exhaustively.}
\label{tab:finitechecks}
\end{table}

Diagnostic expansions additionally check:
\begin{itemize}
\item 232 stationary chunks through parameter \(60\);
\item 398 linear tails through level \(100\);
\item 39 complete bridges through level \(20\);
\item 349\,504 epoch macrosegments and 87\,364 gap anchors through level \(8\);
\item 443 exact source identities, totaling 10\,919\,761 local target steps.
\end{itemize}
These cutoffs are regression tests, not logical hypotheses.

After downloading and extracting the Zenodo release, the complete audit is
reproduced from its top-level directory by
\begin{verbatim}
python run_all_checks.py
\end{verbatim}

\subsection{Trusted base}

The trusted base consists of:
\begin{enumerate}
\item the mathematical reductions and gluing arguments printed in this
paper;
\item the machine-readable 122-rule table and finite kernel data;
\item the short deterministic checkers;
\item the Python runtime and standard library used to execute them.
\end{enumerate}
The result is not claimed to be formalized in Lean, Coq, Isabelle, or HOL.
An independent reimplementation of the finite replay and loop checks would
further reduce implementation risk.

\section{Scope and comparison with Version 2}

Version 2 attempted to derive global well-definedness from a finite
symbolic renormalization cycle extracted from a long trace.  That
description was useful experimentally but left a semantic gap: closure of
the abstract overlap graph did not imply that every future recurrence
position realized the required source context.

Version 3 removes that inference.  The finite local table is used only
after an independent all-level construction has proved global membership.
The unbounded structure is not hidden in a finite-radius saturation claim;
it is handled explicitly by:

\begin{itemize}
\item a direct recurrence-to-kernel alignment through the first recurrent
factor boundary;
\item four stationary chunk inductions;
\item six linear tail loop inductions;
\item mutual rank-word factorizations;
\item exact physical source-word gluing;
\item a marked, absolute-address four-factor induction;
\item a finite lag potential.
\end{itemize}

The two diagrams from Version 2 are therefore not reused: both depict the
superseded \(S\to T\to U\to V\) certificate architecture.  Figures
\ref{fig:architecture} and \ref{fig:factorcycle} replace them with diagrams
of the proof actually used here.

\section{Conclusion}

The parity-perturbed Hofstadter recursion is globally well-defined.
The proof separates finite local semantics from unbounded global
structure.  The former is discharged by an exhaustive finite kernel; the
latter is handled by explicit word induction and exact source gluing.
This separation avoids the central logical weakness of trace-derived
finite-state extrapolation.

The method may be useful for other nested recurrences in which local
semantics are finite but the global organization is governed by
parameterized return-word families rather than a single finite
substitution.

\section*{Data and code availability}

The complete machine-readable certificate, including the 122-rule
transition table, the 92-state domain, all source-word, chunk, tail,
bridge, lag, semantic, selector, symbolic-address, and global-closure
checkers, and the generated audit files, is archived in the immutable
version-specific Zenodo release \cite{MantovanelliReturnWordSupplement}.

After downloading and extracting the archive, the complete finite audit is
reproduced from its top-level directory by
\begin{center}
\code{python run\_all\_checks.py}.
\end{center}
A successful run writes \path{data/run_all_checks_audit.json}, whose
final field is \code{"passed": true}.  File-level SHA-256 checksums are
recorded in \path{SHA256SUMS.txt}.  The archived release corresponding to
this manuscript version has DOI
\href{https://doi.org/10.5281/zenodo.21495335}
{\texttt{10.5281/zenodo.21495335}}.

\section*{Acknowledgements}

The author is grateful to Beno\^{\i}t Cloitre for his detailed analysis
of the sequence and for comments on earlier versions of the manuscript
that helped clarify the limitations of the previous finite-state
formulation.

\medskip
\noindent\textbf{AI disclosure.}
OpenAI's ChatGPT was used for language editing, bibliographic
organization, \LaTeX{} assistance, exploratory calculations, the
development and checking of proof ideas, and the drafting and
refactoring of verification code.  The author independently verified
all mathematical statements, computations, code outputs, and
bibliographic claims and assumes full responsibility for the
manuscript.

\appendix

\section{The 13 return words}\label{app:returnwords}

The following table reproduces the machine-readable return-word list.
Spaces separate \(C_7\)-codes.

\begingroup
\footnotesize
\setlength{\tabcolsep}{4pt}
\begin{longtable}{C{10mm} C{12mm} >{\raggedright\arraybackslash}p{0.72\textwidth}}
\toprule
type & length & \(C_7\)-word\\
\midrule
\endfirsthead
\toprule
type & length & \(C_7\)-word\\
\midrule
\endhead
0&2&\code{84 43}\\
1&31&\code{84 43 84 47 89 49 100 73 20 43 84 47 89 49 101 77 27 52 106 80 36 73 20 47 89 49 101 77 27 52 107}\\
2&8&\code{84 43 80 37 77 27 52 107}\\
3&10&\code{84 43 84 47 89 49 100 73 20 43}\\
4&39&\code{84 43 84 47 89 49 100 73 22 46 91 52 107 80 37 77 25 49 100 73 20 43 84 47 89 49 100 73 22 46 91 52 107 80 37 77 27 52 107}\\
5&20&\code{84 43 80 37 77 25 49 100 73 22 46 91 52 107 80 37 77 27 52 107}\\
6&30&\code{84 43 84 47 89 49 100 73 20 43 84 47 89 49 101 77 27 52 106 80 36 73 20 47 89 49 100 73 20 43}\\
7&40&\code{84 43 80 37 77 27 52 106 80 36 73 20 47 89 49 100 73 20 43 84 47 89 49 101 77 27 52 106 80 36 73 20 47 89 49 101 77 27 52 107}\\
8&20&\code{84 43 80 37 77 27 52 106 80 36 73 20 47 89 49 101 77 27 52 107}\\
9&30&\code{84 43 84 47 89 49 100 73 22 46 91 52 107 80 37 77 25 49 100 73 20 43 84 47 89 49 100 73 20 43}\\
10&31&\code{84 43 80 37 77 25 49 100 73 22 46 91 52 107 80 37 77 25 49 100 73 20 43 84 47 89 49 100 73 20 43}\\
11&40&\code{84 43 80 37 77 25 49 100 73 22 46 91 52 107 80 37 77 25 49 100 73 20 43 84 47 89 49 100 73 22 46 91 52 107 80 37 77 27 52 107}\\
12&39&\code{84 43 80 37 77 27 52 106 80 36 73 20 47 89 49 100 73 20 43 84 47 89 49 101 77 27 52 106 80 36 73 20 47 89 49 100 73 20 43}\\
\bottomrule
\end{longtable}
\endgroup

\section{Finite connector inventory}

The complete connector words and source bridges are contained in the Zenodo archive.  Table~\ref{tab:keyconnectors} lists the seven bridge
connectors used in Theorem~\ref{thm:completebridges}.

\begin{table}[htbp]
\centering
\small
\begin{tabularx}{\textwidth}{l c c Y}
\toprule
connector & start & end & target factor\\
\midrule
A-prefix & 1070714 & 17435506 & \(4H_1\)\\
A \(11/P_2\) & 17435506 & 17337218 & \(11\,5\phi_2(P_2)\)\\
A \(5\) & 17337218 & 16778021 & \(5\)\\
A-suffix & 17352080 & 17455430 & \(6\phi_3(Q_2)6\)\\
B-prefix & 17373884 & 610744 & \(1\,7K_2\)\\
B \(7/Q_1\) & 610744 & 4440 & \(7\phi_2(Q_1)8\)\\
B-suffix & 1101155 & 1150227 & \(9\phi_3(P_1)\)\\
\bottomrule
\end{tabularx}
\caption{Finite bridge connectors.}
\label{tab:keyconnectors}
\end{table}

\section{Explicit projection recursions for Lemma~\ref{lem:portfactor}}
\label{app:portrecursions}

This appendix spells out the simultaneous free-monoid induction used in
Lemma~\ref{lem:portfactor}.  Put
\[
 W_A(n)=\AEpoch(n),\qquad W_B(n)=\BEpoch(n),
\]
and abbreviate the four inclusive bridge fragments by
\begin{align*}
 \mathfrak p_A(n)&=\pref_{11}(\Abridge_n),&
 \mathfrak s_A(n)&=\suf_6(\Abridge_n),\\
 \mathfrak p_B(n)&=\pref_7(\widehat{\Bbridge}_n),&
 \mathfrak s_B(n)&=\suf_9(\widehat{\Bbridge}_n).
\end{align*}
The explicit bridge words
\eqref{eq:Abridgeword}--\eqref{eq:Bbridgeword} give the four fragment
recursions
\begin{align}
 \mathfrak p_A(n+1)
   &=\mathfrak p_A(n)\odot(11H_{2n+1}11),\label{eq:precA}\\
 \mathfrak s_A(n+1)
   &=(6\phi_3(Q_{2n+2})6)\odot\mathfrak s_A(n),\label{eq:srecA}\\
 \mathfrak p_B(n+1)
   &=\mathfrak p_B(n)\odot(7K_{2n}7),\label{eq:precB}\\
 \mathfrak s_B(n+1)
   &=(9\phi_3(P_{2n+1})9)\odot\mathfrak s_B(n).\label{eq:srecB}
\end{align}
The terminal copy of \(11,6,7,\) or \(9\) in each parenthesized factor
is the inclusive splice block and is read only once under \(\odot\).
At level one,
\[
 \mathfrak p_A(1)=4H_1 11,\qquad
 \mathfrak s_A(1)=6\phi_3(Q_2)6,
\]
while \(\mathfrak p_B(1),\mathfrak s_B(1)\) are the corresponding
prefix and suffix of \(\widehat{\Bbridge}_1=C_{5,2}\).

Define
\[
 \mathsf U_X(n)=u^*(\mathcal T_X(n)),\qquad
 \mathsf V_X(n)=v^*(\mathcal T_X(n))
 \quad(X\in\{A,B\}).
\]
The four projection recursions are
\begin{align}
 \mathsf U_A(n)&=W_B(n)\mathfrak p_B(n+1),
   \label{eq:projrecAu}\\
 \mathsf V_A(n)&=\mathfrak s_A(n)W_B(n)1,
   \label{eq:projrecAv}\\
 \mathsf U_B(n+1)&=W_A(n)\mathfrak p_A(n+1),
   \label{eq:projrecBu}\\
 \mathsf V_B(n+1)&=\mathfrak s_B(n+1)W_A(n)4.
   \label{eq:projrecBv}
\end{align}
Equations \eqref{eq:projrecAu}--\eqref{eq:projrecBv} are exactly
\eqref{eq:AportA}--\eqref{eq:BportB}, with the B-index shifted so that
the induction order is visible.

To verify one induction step, split the rank assignments at the boundary
words in
\[
 A_n=E_nU(B_n),\qquad B_{n+1}=O_nV(A_n).
\]
The contribution of each part is as follows.
\begin{center}
\small
\begin{tabularx}{\textwidth}{c c c Y}
\toprule
step & rank part & projection & ordered contribution\\
\midrule
\(\mathsf B_n\Rightarrow\mathsf A_n\)
 & \(U(B_n)\) & \(u,v\) & the complete target word \(W_B(n)\)\\
 & \(E_n\), read backward & \(u\) &
   \(K_2,K_4,\ldots,K_{2n}\), hence \(\mathfrak p_B(n+1)\)\\
 & \(E_n\), read forward & \(v\) &
   \(Q_{2n},Q_{2n-2},\ldots,Q_2\), hence \(\mathfrak s_A(n)\)\\[1mm]
\(\mathsf A_n\Rightarrow\mathsf B_{n+1}\)
 & \(V(A_n)\) & \(u,v\) & the complete target word \(W_A(n)\)\\
 & \(O_n\), read backward & \(u\) &
   \(H_1,H_3,\ldots,H_{2n+1}\), hence \(\mathfrak p_A(n+1)\)\\
 & \(O_n\), read forward & \(v\) &
   \(P_{2n+1},P_{2n-1},\ldots,P_1\), hence \(\mathfrak s_B(n+1)\)\\
\bottomrule
\end{tabularx}
\end{center}
Here ``read forward'' and ``read backward'' refer to the zero-token and
one-token rank iterators in Section~\ref{sec:rankgrammar}; the physical
projection in the B-half-loop case is fixed by the boundary connector.
The four rank-\((-1)\) words supply exactly the displayed initial and
terminal letters \(1\) and \(4\).

For completeness, consider a single interior rank \(r\).  In the
A-to-B step it is replaced by \(U(r)=\pi_r0\); in the B-to-A step it is
replaced by \(V(r)=0\rho_r\).  The token source rows
\eqref{eq:A0zip}--\eqref{eq:B1zip} and the order identities
\[
 \rho_m=(\pi_{m-1}+1)1,\qquad
 \pi_m=1(\rho_{m-1}+1)
\]
show that the parent factors and FIFO factors occur in exactly the order
listed in the table.  Thus the interior contribution is the complete
opposite-component epoch word, not merely a word with the same letter
counts.

The discarded parameter in Definition~\ref{def:Binsert} is also forced.
If
\[
 R(B_n)=(r_1,\ldots,r_t)
\]
is the list of positive entries read from right to left, then the explicit
tail \(T_B(n)\) gives \(r_1=1\).  With
\[
 \eta(r)=\begin{cases}0,&r=1,\\ r,&r>1,\end{cases}
\]
the interior zero-token parameter list is therefore
\[
 (q_1,\ldots,q_{t-1})
   =(\eta(r_2),\ldots,\eta(r_t)).
\]
The omitted value \(\eta(r_1)=0\) is consumed by the initial half-loop
\(C_{6,0}\); it is not an interior gap parameter.  Every other old
\(1\) contributes its separately prescribed \(Q_1\)-factor.  Hence the
operation ``replace \(1\) by \(0\), then discard the first parameter''
neither drops nor duplicates a source factor.

The finite identity \eqref{eq:B1EpochSource} is the base
\(\mathsf B_1\).  Applying the first half of the table yields
\(\mathsf A_1\), and applying the second yields \(\mathsf B_2\).
Induction then alternates
\[
 \mathsf B_n\Longrightarrow\mathsf A_n
 \Longrightarrow\mathsf B_{n+1},
\]
which proves all four projection recursions.

\section{Certificate inventory}

The immutable Zenodo release
\cite{MantovanelliReturnWordSupplement} contains the following directories:
\begin{itemize}
\item \code{kernel\_data}: the 13 return words, 92-state table, 122-rule
table, atom and pair rules, cursor transforms, and odd swaps;
\item \code{code}: explicit rank, epoch, chunk, tail, bridge, lag,
semantic, symbolic-address, selector, and global-closure checkers;
\item \code{data}: machine-generated audit tables and JSON summaries.
\end{itemize}
The file \path{SHA256SUMS.txt} records the checksums of the archived
release.  The command \code{python run\_all\_checks.py}, executed from the
top-level archive directory, reproduces the complete finite audit and
writes \path{data/run_all_checks_audit.json}.  The immutable release is
available at
\href{https://doi.org/10.5281/zenodo.21495335}
{\texttt{10.5281/zenodo.21495335}}.

\begingroup
\raggedright
\bibliographystyle{unsrt}
\bibliography{references}

@book{Hofstadter1979,
  author    = {Douglas R. Hofstadter},
  title     = {Gödel, Escher, Bach: An Eternal Golden Braid},
  publisher = {Basic Books},
  year      = {1979}
}

@article{Pinn1999,
  author  = {Klaus Pinn},
  title   = {Order and chaos in {Hofstadter}'s {Q}-sequence},
  journal = {Complexity},
  year    = {1999},
  volume  = {4},
  number  = {3},
  pages   = {41--46}
}

@article{Tanny1992,
  author  = {Stephen M. Tanny},
  title   = {A well-behaved cousin of the {Hofstadter} sequence},
  journal = {Discrete Mathematics},
  year    = {1992},
  volume  = {105},
  pages   = {227--239}
}

@article{Alkan2017,
  author  = {A. Alkan and N. Fox},
  title   = {On {Hofstadter}-type sequences},
  journal = {Complexity},
  year    = {2017},
  volume  = {2017},
  pages   = {1--10}
}

@article{Mantovanelli2026,
  author        = {Marco Mantovanelli},
  title         = {A Dyadic Frequency Law for a Perturbed {Hofstadter} {Q}-Recursion},
  journal       = {arXiv preprint},
  year          = {2026},
  eprint        = {2603.16111},
  archivePrefix = {arXiv},
  primaryClass  = {math.CO},
  url           = {https://arxiv.org/abs/2603.16111v3},
  note          = {arXiv:2603.16111v3, 18 July 2026}
}

@article{Alkan2018,
  author  = {A. Alkan},
  title   = {On a Generalization of {Hofstadter}'s {Q}-Sequence: A Family of Chaotic Generational Structures},
  journal = {Complexity},
  year    = {2018},
  pages   = {Article 8517125},
  doi     = {10.1155/2018/8517125}
}

@incollection{CelayaRuskey2012Undecidable,
  author    = {Celaya, M. and Ruskey, Frank},
  title     = {An undecidable nested recurrence relation},
  booktitle = {How the World Computes},
  series    = {Lecture Notes in Computer Science},
  volume    = {7318},
  pages     = {138--149},
  publisher = {Springer},
  year      = {2012},
  doi       = {10.1007/978-3-642-30870-3_12},
  url       = {https://doi.org/10.1007/978-3-642-30870-3_12}
}

@article{Fox2016LinearRecurrent,
  author       = {Fox, Nathan},
  title        = {Finding linear-recurrent solutions to {Hofstadter}-like recurrences using symbolic computation},
  journal      = {arXiv preprint},
  year         = {2016},
  eprint       = {1609.06342},
  archivePrefix= {arXiv},
  primaryClass = {math.CO},
  url          = {https://arxiv.org/abs/1609.06342},
  note         = {arXiv:1609.06342}
}

@article{Fox2018NewApproachQ,
  author       = {Fox, Nathan},
  title        = {A New Approach to the {Hofstadter} {Q}-Recurrence},
  journal      = {arXiv preprint},
  year         = {2018},
  eprint       = {1807.01365},
  archivePrefix= {arXiv},
  primaryClass = {math.CO},
  url          = {https://arxiv.org/abs/1807.01365},
  note         = {arXiv:1807.01365}
}

@misc{OEISA394051,
  author       = {OEIS Foundation Inc.},
  title        = {Entry A394051 in The On-Line Encyclopedia of Integer Sequences},
  howpublished = {\url{https://oeis.org/A394051}},
  note         = {Accessed: 2026-03-31},
  year         = {2026}
}

@article{Cloitre2026PerturbedQ,
  author        = {Beno{\^i}t Cloitre},
  title         = {The {Mantovanelli--Hofstadter} Sequence},
  journal       = {arXiv preprint},
  year          = {2026},
  eprint        = {2604.06237},
  archivePrefix = {arXiv},
  primaryClass  = {math.NT},
  url           = {https://arxiv.org/abs/2604.06237v2},
  note          = {arXiv:2604.06237v2}
}

@misc{MantovanelliReturnWordSupplement,
  author       = {Marco Mantovanelli},
  title        = {Supplementary Code and Certificate Data for
                  ``Certified Return-Word Induction for a Perturbed
                  {Hofstadter} Recursion''},
  year         = {2026},
  publisher    = {Zenodo},
  version      = {3.0.0},
  doi          = {10.5281/zenodo.21495335},
  url          = {https://doi.org/10.5281/zenodo.21495335},
  note         = {Version 3.0.0}
}
\endgroup

\end{document}